\newtheorem{theorem}{Theorem}[section]
\newtheorem{lemma}[theorem]{Lemma}
\newtheorem{proposition}[theorem]{Proposition}
\theoremstyle{definition}
\newtheorem{definition}[theorem]{Definition}
\theoremstyle{remark}
\newtheorem{remark}[theorem]{Remark}
 \numberwithin{equation}{section}
\newcommand{\hatbox}{\widehat{\square}}
\newcommand{\supp}{\operatorname{supp}}
\renewcommand{\Im}{\operatorname{Im}}
\newcommand{\MF}{\mathcal{F}}
\newcommand{\C}{\mathbb{C}}
\newcommand{\N}{\mathbb{N}}
\newcommand{\R}{\mathbb{R}}
\begin{document}

\title[Marcinkiewicz multipliers]
{Marcinkiewicz multipliers associated with the Kohn  Laplacian on the Shilov boundary\\ of the product domain in  $\C ^{2n}$}

\author{Peng {Chen}}
\address{Department of Mathematics, Sun Yat-Sen University, Guangzhou, P. R. China}
\email{achenpeng@163.com}

\author{Michael G. {Cowling}}
\address{School of Mathematics and Statistics, University of New South Wales, Sydney 2052, Australia}
\email{m.cowling@unsw.edu.au}

\author{Guorong {Hu}}
\address{College of Mathematics and Information Science,
Jiangxi Normal University, Nanchang, Jiangxi 330022, P. R. China}
\email{hugr1984@163.com}

\author{Ji {Li}}
\address{Department of Mathematics, Macquarie University, Sydney, NSW 2109, Australia}
\email{ji.li@mq.edu.au}

\subjclass[2010]{32A50, 32A55, 32T15, 32W30, 32W10}

\keywords{Nonisotropic smoothing operator of order zero, product singular integral of Journ\'e type, Marcinkiewicz multiplier, Kohn  Laplacian}

\date{\today}

\begin{abstract}
Let $M^{(k)}$, $k=1,2,\ldots, n$, be the boundary of an unbounded polynomial domain $\Omega^{(k)}$ of finite type in $\C ^2$, and
let $\Box_b^{(k)}$ be the Kohn  Laplacian on $M^{(k)}$. In this paper,
we study multivariable spectral multipliers $m(\Box_b^{(1)},\ldots, \Box_b^{(n)})$
acting on the Shilov boundary $\widetilde{M}=M^{(1)} \times\cdots\times M^{(n)}$ of the product domain $\Omega^{(1)}\times\cdots\times \Omega^{(n)}$.
We show that if a function $F(\lambda_1, \ldots ,\lambda_n)$ satisfies a Marcinkiewicz-type
differential condition, then the spectral multiplier operator $m(\Box_b^{(1)}, \ldots, \Box_b^{(n)})$ is
a product Calder\'on--Zygmund operator of Journ\'e type.
\end{abstract}

\maketitle

\section{Introduction and statement of main result}
\allowdisplaybreaks

The Shilov boundary $\widetilde{M}=M^{(1)} \times\cdots\times M^{(n)}$ of a product domain $\Omega^{(1)}\times\cdots\times \Omega^{(n)}$ in $\C^{2n}$ is an important model in several complex variables that extends the standard case of the bidisc \cite{NS1}.
Here each $M^{(k)}$ is the boundary of an unbounded polynomial domain $\Omega^{(k)}:= \{(z,w)\in\C ^2: \Im(w)>P^{(k)}(z)\}$, where $P^{(k)}$ is a
real, subharmonic, nonharmonic polynomial of degree $m_k$ (see \cite{NS}). In \cite{NS1}, Nagel and Stein built the theory of product singular integrals and Littlewood--Paley theory on $\widetilde{M}$, based on the solution to the initial value problem and the regularity properties of the heat
operator for the Kohn  Laplacian $\Box_b^{(k)}$ on each $M^{(k)}$.
This work leads immediately to optimal estimates for the behaviour of the solution of the Kohn--Laplace equation on the decoupled boundary in $\C^{n+1}$ \cite{NS2}.
It is worth pointing out that the heat kernels of the Kohn  Laplacian $\Box_b$ on the boundary $M$ (here we drop the superscript $^{(k)}$) do not satisfy standard Gaussian upper bounds; indeed, Nagel and M\"uller (unpublished) showed that the kernel of $e^{-s\Box_b}$ satisfies the pointwise upper bound
\begin{equation}\label{e7.2}
\left|
K_{e^{-s\Box_b}} (x, y)\right| \leq \frac{C}{V(x, \rho(x,y))} \exp\left(-c{\rho(x,y)^{2}/ s}\right)
\end{equation}
for some positive constants $C$ and $c$ (see  also  \cite{Str1}).
Here $\rho$ is the control metric on $M$ and $V(x,\delta)$ is the volume of the metric ball $B(x,\delta): = \{y \in M: \rho(x,y) <\delta\}$.
The measure $V$ is doubling, with upper dimension $Q$, and reverse doubling, with lower dimension $4$. See Section 2 for the details.

One of the important types of singular integrals on the Shilov boundary $\widetilde{M}$ is the family of Marcinkiewicz multipliers associated with the Kohn  Laplacians, that is, the operators $m(\Box_b^{(1)}, \ldots, \Box_b^{(n)})$.
The aim of this paper is to investigate the behaviour of these multipliers.
We show that if the function $m(\lambda_1, \ldots ,\lambda_n)$ satisfies a suitable smoothness condition, defined using Sobolev norms, then $m(\Box_b^{(1)}, \ldots, \Box_b^{(n)})$ is
a product Calder\'on--Zygmund operator of Journ\'e type.

To ease the burden of notational complexity, in what follows we consider only the two parameter case, that is, $n=2$.

Marcinkiewicz multipliers in the Euclidean setting provide one of the most important examples of product Calder\'on--Zygmund operators.
It is well-known that such multipliers are bounded on $L^p$ for $1<p<\infty$. To the best of our knowledge, the earliest such result is due to Gundy
and Stein \cite{GS} in 1979; they investigated the operator $m(\Delta_1, \Delta_2)$ with $m: {\R^2}\to {\C}$  a bounded function and $\Delta_i$  the standard Laplace operator on the Euclidean space $\R^{n_i}$, for $i=1$,~2.
They proved that, if $\alpha_0,\beta_0 \in \N$ are sufficiently large, then $m(\Delta_1, \Delta_2)$, initially defined  on $L^2({\R}^{n_1}\times \R^{n_2})$, extends to a bounded operator on $L^p(\R^{n_1}\times \R^{n_2})$ for all $p\in (1, \infty)$ provided the function $m$ satisfies
\begin{equation}\label{Marcinkiewcz}
    \left| \partial_{\lambda_2}^{\beta}
        \partial_{\lambda_1}^{\alpha} m(\lambda_1, \lambda_2) \right|
    \lesssim \lambda_1^{-\alpha}\lambda_2^{-\beta}
\end{equation}
for all $\alpha\leq \alpha_0$ and $\beta\leq\beta_0$.
Their proof uses the pointwise majorization of $m(\Delta_1, \Delta_2)$ by the Littlewood--Paley product $g$ and
$g_{\lambda}$ functions.
Another approach, due to L.K.~Chen~\cite{Ch}, uses $(H^1, L^1)$ estimates for $L^2$-bounded linear operators, where $H^1$ is the product Hardy space
introduced by Gundy and Stein and further studied by Chang and Fefferman.
Chen proved that it suffices to take $\alpha_0$ and $\beta_0$ equal to
when $\lfloor{n_1}/{2}\rfloor+1$ and $\lfloor{n_2}/{2} \rfloor+1$.

Recently, this Marcinkiewicz multiplier was investigated in the abstract setting of spaces of homogeneous type with the Marcinkiewicz multiplier $m(L_1,L_2)$ associated with nonnegative self-adjoint second-order operators whose heat kernels satisfy
Gaussian upper bounds \cite{CDLWY}.
Suppose that $X_1\times X_2$ is the Cartesian
product of measure spaces $X_1$ and $X_2$ and that $L_1$ and
$L_2$ are nonnegative self-adjoint operators acting on the
spaces $L^2(X_1)$ and $L^2(X_2)$.
 Let $E_{L_i}$ be the projection valued measure associated to $L_i$.
There is a unique spectral decomposition~$E$ such that for all Borel
subsets $A\subset \R^2$, $E(A)$ is a projection on
$L^2({X_1\times X_2})$ and $ E(A_1\times
A_2) = E_{L_1}(A_1)\otimes E_{L_2}(A_2)$ for all Borel subsets
$A_1$ and $A_2$ of $\R $.
Hence for a bounded
Borel function $m: \R^2\to {\C}$, one
may define the spectral multiplier operator $m(L_1, L_2)$
acting on the space $L^2({X_1\times X_2})$ by the formula
\begin{equation}\label{e1.1}
m(L_1, L_2)
:= \iint_{\R \times \R} m(\lambda_1, \lambda_2) \, dE(\lambda_1, \lambda_2).
\end{equation}
Clearly $m(L_1, L_2)$ is bounded on $L^2({X_1\times X_2})$.
If the function $F$ satisfies a suitable mixed Sobolev condition with regularity $s_1>n_1/2$ and $s_2>n_2/2$, then $m(L_1, L_2)$ is bounded on $L^p({X_1\times X_2})$ for $1<p<\infty$ (\cite{CDLWY}).
Here $n_i$ is the upper dimension of $X_i$.
The proof uses a ($H^1$, $L^1$) estimate, where the $H^1$ is the product Hardy space associated with $L_1$ and $ L_2$ introduced in \cite{CDLWY2}.

In the single-parameter compact manifold $M$, Street \cite{Str2}
proved that the spectral multiplier $m(\Box_b)$ is an anisotropic smooth (NIS) operator of order zero if $m: [0,\infty) \to \C$ satisfies the Mihlin--H\"{o}rmander condition
$ |(\lambda \partial_\lambda)^am(\lambda)|\leq C_a$  for all $\lambda \in (0, \infty)$ and all  $a\in \N_0$, then $m(\Box_b)$ is bounded on $L^p(M)$ for all $1<p<\infty$.
Moreover, he pointed out that if one is only concerned with $L^p$ boundedness, then a sufficient condition is
\[
\sup_{t >0} \|\eta(\cdot) m(t\cdot)\|_{ W^{\sigma, 2}(\R )} < \infty
\]
where $\sigma> {Q+1 / 2}$; here $\eta$ is a nontrivial cut-off function on $(0, \infty)$ and $W^{\sigma, 2}(\R )$ is the usual fractional Sobolev space on $\R $. The expected result for $m(\Box_b)$ in the unbounded polynomial domain is also discussed in \cite[Section 8]{Str2}.

In light of \cite{Str1},  it is natural to expect that the Marcinkiewicz multiplier $m(\Box_b^{(1)}, \Box_b^{(2)})$
is a product NIS operator of order zero when $m$ satisfies the decay condition for infinitely many partial derivatives.
Here we provide a more general characterisation.
We aim to prove that when $m$ satisfies a suitable Sobolev estimate, $m(\Box_b^{(1)}, \Box_b^{(2)})$ is a product Calder\'on--Zygmund operator of Journ\'e type \cite{J}.
In this metric space setting, Journ\'e-type product Calder\'on--Zygmund operators, and the corresponding product Hardy and BMO spaces and product T1 theorem were studied in \cite{HLL}.

To state our main result, we write $W^{\sigma, 2}(\R )$ for the usual fractional Sobolev space on $\R$, and $W^{(\sigma_1, \sigma_2),2}(\R \times \R )$ for the product Sobolev space whose norm is given by
\[
\|f\|_{W^{(\sigma_1, \sigma_2),2}(\R \times \R )} = \|(1 +|\xi_1|^{2})^{\sigma_1/2} (1 +|\xi_2|^{2})^{\sigma_2 /2}\widehat{f}(\xi_1, \xi_2)\|_{L^{2}(\R \times \R )},
\]
and $\eta$ for a  $C_0^\infty(\R)$-function supported in $[1/2, 2]$ such that $\eta =1$ in a neighborhood of $1$.

\begin{theorem} \label{main}
Let $Q_1$ and $Q_2$ be the upper dimensions of the spaces $M^{(1)}$ and $M^{(2)}$.
Suppose that $\sigma_{1} > (Q_1 +3)/{2}$ and $\sigma_{2} > (Q_2 +3)/{2}$, and that $ m:[0,\infty)
\times [0,\infty) \to \C $ satisfies
\begin{align} \label{con1}
&\sup_{t_1, t_2 >0} \|\eta (\lambda_1) \eta (\lambda_2)
m(t_1 \lambda_1, t_2 \lambda_2)\|_{W^{(\sigma_1, \sigma_2), 2}(\R \times \R )}<\infty,
\end{align}
\begin{align} \label{con2}
\sup_{t_1>0} \|\eta (\lambda_1) m(t_1 \lambda_1, 0)\|_{W^{\sigma_1, 2}(\R )}<\infty
\end{align}
and
\begin{align} \label{con3}
\sup_{t_2>0} \|\eta (\lambda_2) m(0, t_2 \lambda_2)\|_{W^{\sigma_2,2}(\R )}<\infty.
\end{align}
Then the Marcinkiewicz multiplier $m(\Box_b^{(1)}, \Box_b^{(2)})$, defined on $L^2(\widetilde{M})$ by spectral theory, is a product Calder\'on--Zygmund operator on $\widetilde{M}$ as defined in \cite{HLL}, and hence is bounded on $L^p(\widetilde{M})$ whenever $1 < p < \infty$.
\end{theorem}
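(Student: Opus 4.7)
The plan is to verify directly that $m(\Box_b^{(1)}, \Box_b^{(2)})$ satisfies the product Calder\'on--Zygmund kernel conditions of Journ\'e type from \cite{HLL}, after which the $L^p$-boundedness for $1<p<\infty$ follows by the general theorem proved there. The argument I would carry out proceeds via a bivariate dyadic spectral decomposition of the multiplier, combined with pointwise heat kernel bounds derived from \eqref{e7.2} in each factor. The strategy parallels the single-parameter treatment of Street \cite{Str2} on one factor $M^{(i)}$ and the multivariable approach of \cite{CDLWY} in the Gaussian setting, now adapted to the non-Gaussian heat kernels associated with the Kohn Laplacian.

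First, I would fix $\psi \in C_0^\infty((0,\infty))$ supported in $[1/2, 2]$ with $\sum_{j \in \mathbb{Z}} \psi(2^{-j}\lambda) = 1$ for all $\lambda > 0$, and decompose
\[
m(\lambda_1, \lambda_2) = \sum_{j_1, j_2 \in \mathbb{Z}} m_{j_1, j_2}(\lambda_1, \lambda_2) \,+\, \text{boundary pieces supported on } \{\lambda_1 = 0\} \cup \{\lambda_2 = 0\},
\]
where $m_{j_1,j_2}(\lambda_1,\lambda_2) = m(\lambda_1,\lambda_2)\psi(2^{-j_1}\lambda_1)\psi(2^{-j_2}\lambda_2)$. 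The axis pieces are single-parameter spectral multipliers on one factor tensored with the identity on the other, which by \eqref{con2}--\eqref{con3} fall into the scope of Street's Mihlin--H\"ormander-type result and so contribute standard Calder\'on--Zygmund operators. By \eqref{con1}, the rescaled functions $(\lambda_1, \lambda_2) \mapsto m_{j_1,j_2}(2^{j_1}\lambda_1, 2^{j_2}\lambda_2)$ are uniformly bounded in $W^{(\sigma_1, \sigma_2), 2}(\R \times \R)$ in $(j_1, j_2)$. Via Fourier inversion, each $m_{j_1, j_2}(\Box_b^{(1)}, \Box_b^{(2)})$ can then be written as an integral of tensor products of complex-time heat semigroups $e^{-(1+is_1) 2^{-j_1}\Box_b^{(1)}} \otimes e^{-(1+is_2) 2^{-j_2}\Box_b^{(2)}}$ against coefficients whose $L^1$-norms are controlled by this uniform Sobolev bound.

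Second, combining this spectral representation with the complexification of \eqref{e7.2} should yield pointwise off-diagonal estimates for the kernel $K_{j_1, j_2}$ of $m_{j_1, j_2}(\Box_b^{(1)}, \Box_b^{(2)})$ of the form
\[
\bigl|K_{j_1, j_2}(x_1, y_1, x_2, y_2)\bigr| \lesssim \prod_{i=1,2} \frac{1}{V_i\bigl(x_i, 2^{-j_i/2}\bigr)\bigl(1 + 2^{j_i/2}\rho_i(x_i, y_i)\bigr)^{N_i}},
\]
together with analogous H\"older-type regularity estimates in each $y_i$ argument, where $N_i$ can be taken as large as $\sigma_i$ permits. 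Following the Plancherel-type argument of \cite{CDLWY} --- which uses the $L^2$ kernel control arising from \eqref{e7.2} and exploits the doubling and reverse-doubling of $V_i$ --- and then summing the resulting estimates dyadically over $(j_1, j_2) \in \mathbb{Z}^2$ together with the boundary contributions would produce the size, one-parameter smoothness, and Journ\'e-type iterated smoothness estimates needed for the product Calder\'on--Zygmund framework of \cite{HLL}.

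The main obstacle will be the passage from Sobolev control of each dyadic piece $m_{j_1, j_2}$ to the full Journ\'e-type mixed regularity of the kernel $K$ of $m(\Box_b^{(1)}, \Box_b^{(2)})$, a genuinely non-tensorial condition requiring precise control of regularity in one variable uniformly over averaging in the other. The threshold $\sigma_i > (Q_i + 3)/2$ reflects the sum of a Plancherel-type threshold $(Q_i + 1)/2$ --- needed for summable off-diagonal size bounds via reverse-doubling absorption of the volume factor --- and an extra derivative required to upgrade size bounds to regularity bounds in the $i$-th kernel variable. Carrying out the bookkeeping of the doubly-infinite sum over $(j_1, j_2)$ uniformly in both variables, and simultaneously across the small-scale, large-scale, and off-diagonal regimes, is where the technical heart of the argument would lie.
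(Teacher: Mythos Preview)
There is a genuine gap in your approach. You propose to obtain the dyadic kernel bounds
\[
\bigl|K_{j_1,j_2}(x_1,y_1,x_2,y_2)\bigr|\lesssim \prod_{i=1,2}\frac{1}{V_i(x_i,2^{-j_i/2})\bigl(1+2^{j_i/2}\rho_i(x_i,y_i)\bigr)^{N_i}}
\]
by writing each piece as an integral of complex-time heat semigroups and invoking a complexification of \eqref{e7.2}. But \eqref{e7.2} has $V(x,\rho(x,y))$ in the denominator, not $V(x,\sqrt{s})$: the heat kernel for $\Box_b$ is \emph{singular on the diagonal}, and the paper stresses precisely this point right after the statement of the theorem. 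Consequently the CDLWY machinery, which rests on genuine Gaussian upper bounds of the form $C\,V(x,\sqrt{s})^{-1}\exp(-c\rho^2/s)$, does not transfer, and the displayed on-diagonal factor $V_i(x_i,2^{-j_i/2})^{-1}$ cannot be extracted from \eqref{e7.2} in the way you suggest. A second, smaller issue: the ``axis pieces'' are not single-parameter multipliers tensored with the identity. The spectral projection onto $\{\lambda_i=0\}$ is the Szeg\H{o} projection $\pi^{(i)}$, so the boundary terms are $\pi^{(1)}\otimes m(0,\hatbox_b^{(2)})$, $m(\hatbox_b^{(1)},0)\otimes\pi^{(2)}$, and $m(0,0)\,\pi^{(1)}\otimes\pi^{(2)}$; you need the NIS-operator estimates for $\pi^{(i)}$ here, not just Street's multiplier theorem.

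The paper avoids the heat-kernel obstacle altogether by using \emph{finite speed of propagation} for the wave group $\cos(t\sqrt{\Box_b^{(i)}})$ (Melrose). For each dyadic piece one modifies the Fourier transform of the multiplier near the origin so that, by finite propagation, the kernel is unchanged at the given off-diagonal point; one then writes the modified multiplier as $J^2$, applies Cauchy--Schwarz, and controls each factor via Street's $L^2$ Bessel-type estimate $\|(X_\alpha)_x K_{(I+t\hatbox_b)^{-\sigma/4}}(x,\cdot)\|_2\lesssim t^{-|\alpha|/2}V(x,\sqrt{t})^{-1/2}$. This is what produces the correct volume factor $V(x,\sqrt{t})^{-1}$ without ever appealing to a pointwise heat bound, and it is also why one must repeatedly peel off the $\pi^{(i)}$-pieces (the Bessel estimate is for $\hatbox_b$, not $\Box_b$). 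Your dyadic decomposition and the summation lemma at the end are in the right spirit, but the core kernel estimate needs this wave-equation route rather than the heat-semigroup one.
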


We recall that the upper bound \eqref{e7.2} for the heat kernel of the Kohn Laplacian $\Box_b^{(i)}$  is singular on the diagonal, that is, $\{(x,y)\in M^{(i)}\times M^{(i)}: x=y\}$. Thus, our setting is quite different from the setting where the semigroup has Gaussian upper bounds.

\begin{remark}
Suppose that  $m: [0,\infty) \times [0, \infty) \to \C $ satisfies
\begin{equation*}
 \left|\partial_{\lambda_1}^{k_1}\partial_{\lambda_2}^{k_2}
m(\lambda_1, \lambda_2) \right| \leq C_{k_1, k_2} \lambda_1^{-k_1} \lambda_2^{-k_2}
\end{equation*}
for all $k_1, k_2 \in \N_0$ with $k_1 \leq \lfloor \sigma_1 \rfloor +1$ and
$k_2 \leq \lfloor \sigma_2\rfloor +1$, and all $\lambda_1, \lambda_2 >0$.
Then $m$ satisfies conditions \eqref{con1} to \eqref{con3}.
\end{remark}

\begin{remark}
In view of \cite{Str1}, it should be possible to use product Littlewood--Paley theory and domination by the strong maximal function to obtain the $L^p$ boundedness of the multiplier $m(\Box_b^{(1)}, \Box_b^{(2)})$ under the conditions $\sigma_{1} > {(Q_1 +1)}/{2}$ and $\sigma_{2} > {(Q_2 +1)}/{2}$.
However, it is an open question whether one can get rid of the ${1/2}$, since tools such as the Plancherel theorem or restriction estimates are not available.
\end{remark}

The paper is organised as follows. In Section 2 we recall necessary notions on
polynomial domains in $\C^2$ and the Shilov boundary in $\C^{2n}$, as well as the product singular integrals of Journ\'e type.
In Section 3, we establish pointwise kernel estimate for the multipliers when the function $m$ is smooth enough and has compact support.
In the last section we prove the main result.

Throughout this paper, $\N$ denotes the set of positive integers, while $\N_0$ denotes the set of nonnegative integers, that is, $\N_0 = \N \cup \{0\}$.
We will use the symbols $c$ and $C$ to denote positive constants, whose values may vary at every occurrence.
If $f \leq C g$ for some constant $C$ which is independent of the main variables involved, then we write $f \lesssim g$.
If $f \lesssim g \lesssim f$, we write $f \sim g$.

\section{Preliminaries}

In this section we recall some basic notions about polynomial domains in $\C^2$ and Shilov boundaries in $\C ^{2n}$, as well as product Calder\'on--Zygmund operators on $M^{(1)}\times M^{(2)}$.

We start by discussing the single factor case.
Let $\Omega:=\{(z,w)\in\C ^2: \Im(w)>P(z)\}$, where $P$ is a
real, subharmonic, nonharmonic polynomial of finite degree $\nu$, as in \cite{NS}.
Then $M := \partial\Omega$ can be identified with $\C \times
\R =\{(z,t):\ z\in\C , t\in\R \}$.
The basic $(0,1)$ Levi vector field is then $\bar{Z}= { \partial /
{\partial \bar{z} } }- i  { ({\partial P} / {\partial \bar{z}})  }
{ \partial / {\partial t} }  $, and we write $\bar{Z}=X_1+i X_2$.
The real vector fields
$\{X_1, X_2\}$ and their commutators of order at most $m$ span the tangent space to $M$ at each point on $M$.
A curve $\gamma: [0,1] \to M$ is said to be \emph{horizontal} if $\gamma'(t) \in \mathrm{span} \{X_1 , X_2 \}$ at each point, and we define the length of such a horizontal curve by
\[
\mathrm{Len}(\gamma) = \int_0^1 |\gamma'(t)| \,dt,
\]
where $|\gamma'(t)|^2 = |c_1(t)|^2 + |c_2 (t)|^2$ when $\gamma'(t) = c_1 (t) X_1 + c_2 (t) X_2$.
We denote by $\rho$ the control (or Carnot--Carath\'eodory) distance in $M$ generated by $X_1$ and $X_2$, defined by
\begin{align*}
\rho(x, y) := \inf \bigg\{ \mathrm{Len}(\gamma) : \gamma \text{ horizontal}, \gamma(0) = x, \gamma(1) = y \biggr\}.
\end{align*}
We put $B(x,r): = \{y \in M: \rho(x,y) < r\}$.
Let $\mu$ be the Lebesgue measure on $M$ and define $V(x,r) :=\mu(B(x,r))$.
Then
\begin{equation}\label{measure}
V(x,\delta) = \biggl( \sum_{k=2}^m \Lambda_k(x)\delta^k \biggr)\delta^2,
\end{equation}
where the $\Lambda_k(x)$ are continuous, non-negative functions on $M$ (see \cite[Section 4.1]{NS01}).
Moreover, if $Q = m + 2$, then for all $x\in M, \lambda\geq 1$ and $\delta >0$,
\begin{align}
\label{ccc}
 V(x, \lambda \delta) &\leq C\lambda^{Q} V(x,   \delta) \\
\noalign{\noindent{and}}
\label{reverse doubling}
V(x,\lambda \delta) &\geq C \lambda^4 V(x,\delta)
\end{align}
for all $x\in M$, $\lambda\geq 1$ and $\delta >0$,

Denote by $X_j^{\ast}$ the formal adjoint of $X_j$, that is,
\[
\langle X_j^{\ast} \varphi, \psi\rangle= \langle \varphi, X_j\psi\rangle,
\]
where $\langle \varphi,  \psi\rangle=\int_M \varphi(x)  \psi(x) \,dx$ for all $\varphi, \psi\in C^{\infty}(M)$.
In general, $X_j^{\ast}=-X_j +a_j$, where $a_j\in C^{\infty}(M)$.
The Kohn  Laplacian $\Box_b$ on $M$ is formally defined by
\begin{align*}
\Box_b:= X_1^{\ast}X_1 + X_2^{\ast}X_2.
\end{align*}
Set the heat operator $T_t:= e^{-t\Box_b}$ for $t>0$.
As recalled in \eqref{e7.2},
the kernel of $T_t$ satisfies the non-Gaussian upper bounds  in terms of the control distance $\rho$ on $M$.

Since $\Box_b$ is a self-adjoint operator, it admits a spectral decomposition $E(\lambda)$;
in particular, $E(0) = \pi$, where $\pi$ is the Szeg\H{o} projection on $M$.
Hence, for any bounded Borel measurable function $m : [0,\infty) \to \C $,
we may define
\begin{align*}
m (\Box_b )= \int_{[0,\infty)}m (\lambda) \,dE (\lambda)
\end{align*}
and, with an abuse of notation, we define
\begin{align*}
m(\hatbox_b) =\int_{(0,\infty)} m (\lambda) \,dE (\lambda),
\end{align*}
so that
\begin{align*}
m(\hatbox_b) = (1-\pi) m (\Box_b) = m (\Box_b)- m (0) \pi.
\end{align*}

To describe higher-order derivatives on $M$ we need more notation.
For each $j \in \N$, we set
\begin{align*}
\mathcal{I}_j:= \{1,2\}^{j} = \{(\alpha_1, \cdots, \alpha_j):  \alpha_{1}, \cdots, \alpha_j  \in \{1,2\}\}.
\end{align*}
For $\alpha =(\alpha_1, \cdots, \alpha_j)\in \mathcal{I}_j$, we define $|\alpha|= j$ and call $|\alpha|$ the length of the multi-index $\alpha$; we also set
\[
X_{\alpha}:= X_{\alpha_1} \cdots X_{\alpha_j}.
\]
For instance, if $\alpha= (1,2,2,1)$, then $|\alpha| =4$ and $X_{\alpha} = X_1 X_2 X_2 X_1$.
By convention, let $\mathcal{I}_0=\{(0)\}$, and for $\alpha \in \mathcal{I}_0$ (that is, $\alpha = (0)$),
we define $|\alpha| = 0$ and $X_\alpha = I$ (the identity operator).  We also put
\begin{align*}
\mathcal{I} = \bigcup_{j =0}^\infty \mathcal{I}_j,
\end{align*}
and write $f \in L^p_k(M)$ if $X_\alpha f \in L^p(M)$ whenever $|\alpha| \leq k$.

Anisotropic smoothing operators (NIS operators for short) were introduced in \cite{NRSW}.
Here we use the definition from \cite[Definition 5.10]{K}.

\begin{definition} \label{NIS}
Let $T: C_0^{\infty}(M) \to C^\infty (M)$ be a linear operator, with Schwartz kernel
$K_T (x, y)$. We say that $T$ is an NIS operator, smoothing of order $r$, if $K_T$ is $C^\infty$ away from the
diagonal of $M \times M$ and the following conditions are satisfied:
\begin{enumerate}
\item
For all $s \geq 0$, there exist parameters $a(s) <\infty$ and $b<\infty$ such that if $\zeta, \zeta' \in C_0^\infty (M)$ and $\zeta' \equiv 1$ on the support of $\zeta$, then there exists $C = C(s, \zeta, \zeta')$ such that
\begin{align*}
\|\zeta Tf \|_{L^2_s(M)} \leq C\left(\|\zeta' f\|_{L^2_{a(s)}(M)} +\|f\|_{L_b^2(M)}\right)
\qquad\forall f \in C_0^\infty (M).
\end{align*}

\item
There exist constants $C_{\alpha, \beta}$ such that for $x \neq y$,
\begin{align*}
\left|(X_\alpha)_x (X_\beta)_y K_T (x,y) \right| \leq C_{\alpha, \beta} \frac{\rho(x,y)^{r -|\alpha| -|\beta|}}{V(x,\rho(x,y))}.
\end{align*}

\item
For each integer $\ell \geq 0$,
there is an integer $N= N(\ell) \geq 0$ and a constant $C = C(\ell)$ such
that if $\phi \in C_0^\infty (B(x, \delta))$, then\footnote{What does $D^\beta \phi$ mean?}
\begin{align*}
\sum_{|\alpha| = \ell} \left| X_\alpha T(\phi) (x) \right| \leq C\delta^{r -\ell} \sup_{y \in M}
\sum_{|\beta| \leq N} \delta^{|\beta|} |X_\beta \phi (y)|.
\end{align*}

\item
The above conditions also hold for the adjoint operator $T^\ast$.
\end{enumerate}
\end{definition}

It is shown in \cite{NRSW, K} that $\pi$ is an NIS operator of order zero and all NIS operators
of order zero are bounded on $L^p(M)$.

Nagel and Stein \cite{NS1} introduced a class of singular integrals on $M$ which includes NIS operators of order zero and
proved that they are still bounded on $L^p(M)$ for $1<p <\infty$. The main difference between their singular integral operators and NIS operators of order zero is that the former are not assumed to satisfy the condition (i) of Definition \ref{NIS}.
In the same paper, products of such singular integral operators are also studied.

For NIS operators and the singular integrals operators of Nagel and Stein  \cite{NS1}, condition (iii) is called a cancellation condition; it guarantees boundedness on $L^2(M)$.
For spectral multipliers, the $L^2$-boundedness holds from the definition via spectral theory.

We will need the definition of single-parameter Calder\'{o}n--Zygmund operators on $M$.
A continuous function $K(x,y)$ defined on  $M\times M \backslash \{ (x, y): x=y\}$  is called a \emph{Calder\'on--Zygmund kernel} if there exists a constant $C>0$ and a regularity exponent $\varepsilon\in (0,1]$ such that
\begin{align}
\tag{i} |K(x,y)|&\leq C V(x,y)^{-1};\\
\tag{ii} \vert K(x,y)-K(x,y')\vert &\leq C
\biggl(\frac{\rho(y,y')}{\rho(x,y)}\biggr)^{\varepsilon} V(x,y)^{-1}
             \quad\text{if}\quad \rho(y,y')\le \frac{\rho(x,y)}{2};\\
\tag{iii} \vert K(x,y)-K(x',y)\vert &\leq C
\biggl(\frac{\rho(x,x')}{\rho(x,y)}\biggr)^{\varepsilon} V(x,y)^{-1}
             \quad\text{if}\quad \rho(x,x')\le \frac{\rho(x,y)}{2}.
\end{align}
The smallest such constant $C$ is denoted by $|K|_{CZ}$. We say
that an operator $T$ is a \emph{singular integral operator} associated with a Calder\'on--Zygmund kernel
$K$ if the operator $T$ is a continuous linear operator from
$C^\infty_0(M)$   into its dual such that
\[
\langle Tf, g \rangle=\iint g(x) K(x,y) f(y) \,d\mu(y) \,d\mu(x)
\]
for all functions $f, g\in C^\infty_0(M)$ with disjoint supports;
$T$ is said to be a \emph{Calder\'on--Zygmund operator} if it
extends to be a bounded operator on $L^2(M)$. If $T$ is a
Calder\'on--Zygmund operator associated with a kernel $K$, its
operator norm is defined by
\[
\|T\|_{CZ}=\|T\|_{L^2\to
L^2}+ | K|_{CZ}.
\]

We now turn to the product case, that is, the case of two factors.
Let $M^{(k)}$ be the boundary of an unbounded model polynomial domain $\Omega^{(k)}$, where $k=1,2$.
We will use superscripts to indicate operators or quantities on $M^{(k)}$.
For example, the Kohn Laplacian, control distance, measure, volume of a ball and higher-order derivatives will be denoted by $\Box_b^{(k)}, \rho^{(k)}(\cdot, \cdot)$, $d\mu^{(k)}$, $V^{(k)}(x,r)$ and $X_\alpha^{(k)}$ respectively.

Now we recall the definition of \emph{product singular integral operators}
on $\widetilde{M}  = M^{(1)} \times M^{(2)}$.
A linear operator $T: C^\infty_0({\widetilde M})\mapsto [C^\infty_0({\widetilde M})]'$ is said to be a singular integral operator if there exist Calder\'on--Zygmund-operator-valued operators $K_1$ on $M^{(2)}$ and $K_2$ on $M^{(1)}$ with the following properties:
\begin{align*}
\tag{i}
&\langle g\otimes k, Tf\otimes h\rangle
=\iint g(x_1)\langle k, K_1(x_1,y_1)h\rangle f(y_1)
                \,d\mu^{(1)}(x_1) \,d\mu^{(1)}(y_1) \\
\noalign{\noindent{for all $f, g\in C^\infty_0( M^{(1)})$ and $h, k\in C^\infty_0(M^{(2)})$ with $\supp f \cap \supp g =\varnothing$;}}
\tag{ii}
&\langle k\otimes g, Th\otimes f\rangle
=\iint g(x_2)\langle k, K_2(x_2,y_2)h\rangle f(y_2) \,d\mu^{(2)}(x_2) \,d\mu^{(2)}(y_2)  \\
\noalign{\noindent{for all $f, g\in C^\infty_0(M^{(2)})$ and $h, k\in C^\infty_0(M^{(1)})$, with $\supp f \cap \supp g=\varnothing$;}}
\tag{iii}
&\hspace{2.3cm} \| K_k (x_k,y_k) \|_{CZ}
\leq C V^{(k)}(x_k,y_k)^{-1};\\
\tag{iv}
&\|K_k (x_k,y_k)-K_k(x_k,y'_k)\|_{CZ}
\leq C
\biggl(\frac{\rho^{(k)}(y_k,y'_k)}{\rho^{(k)}(x_k,y_k)}\biggr)^{\varepsilon_k}
        V^{(k)}(x_k,y_k)^{-1} \\
\noalign{\noindent{if $\rho^{(k)}(y_k,y'_k)\le \dfrac{\rho^{(k)}(x_k,y_k)}{2}$;}}
\tag{v}
&\|K_k(x_k,y_k)-K_k(y'_k,y_k)\|_{CZ}
\leq C \biggl(\frac{\rho^{(k)}(x_k,y'_k)}{\rho^{(k)}(x_k,y_k)}\biggr)^{\varepsilon_k}
        V^{(k)}(x_k,y_k)^{-1} \\
\noalign{\noindent{if $\rho^{(k)}(x_k,y'_k)\le \dfrac{\rho^{(k)}(x_k,y_k)}{2}\,.$}}
\end{align*}

A product singular integral operator $T$ is said to be a \emph{product Calder\'on--Zygmund operator} if $T$ is bounded on $L^{2}(\widetilde{M})$.

\begin{proposition}[\cite{HLL}] \label{mmm0}
Suppose that $T$ is a product Calder\'on--Zygmund operator.
Then $T$ extends to a bounded operator on $L^{p}(\widetilde{M})$ when $1 <p <\infty$.
\end{proposition}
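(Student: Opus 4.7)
The plan is to combine the hypothesis $T \colon L^2(\widetilde{M}) \to L^2(\widetilde{M})$ with an endpoint estimate $T \colon H^1_{\mathrm{prod}}(\widetilde{M}) \to L^1(\widetilde{M})$, where $H^1_{\mathrm{prod}}(\widetilde{M})$ is the product Hardy space on $\widetilde{M}$ developed in the HLL framework, and then close the argument by real interpolation and duality. The starting point is to reinterpret conditions (i)--(v): $x_1 \mapsto K_1(x_1,y_1)$ is a $\mathcal{B}(L^2(M^{(2)}))$-valued Calder\'on--Zygmund kernel on $M^{(1)}$ of order zero, with operator-norm size bound $V^{(1)}(x_1,y_1)^{-1}$ and H\"older regularity of exponent $\varepsilon_1$ in both entries, and symmetrically for $K_2$ on $M^{(2)}$.

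The main step is the endpoint bound $T \colon H^1_{\mathrm{prod}}(\widetilde{M}) \to L^1(\widetilde{M})$. By the atomic (or equivalent molecular) decomposition of $H^1_{\mathrm{prod}}(\widetilde{M})$ provided in the HLL framework, it suffices to bound $\|Ta\|_{L^1(\widetilde{M})}$ uniformly on product $H^1$-atoms. Because product atoms are supported in arbitrary open subsets $\Omega \subset \widetilde{M}$ rather than single rectangles, one invokes a Journ\'e-type covering lemma adapted to the product space $(M^{(1)} \times M^{(2)}, \rho^{(1)}, \rho^{(2)})$ with the doubling measures $\mu^{(1)}, \mu^{(2)}$ to decompose (an enlargement of) $\Omega$ into a countable family of maximal dyadic rectangles whose geometric overflow is summable. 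Off the enlarged set one applies the size estimates (iii), summed in one variable against the atomic cancellation in the other, together with the H\"older estimates (iv)--(v), to produce decay faster than the measure of the corresponding rectangle. On the enlarged set one uses $L^2$-boundedness together with H\"older's inequality, exploiting the control on the measure of the enlargement produced by Journ\'e's lemma.

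Once $T \colon L^2 \to L^2$ and $T \colon H^1_{\mathrm{prod}} \to L^1$ are in hand, real interpolation between $H^1_{\mathrm{prod}}(\widetilde{M})$ and $L^2(\widetilde{M})$, whose intermediate spaces coincide with the usual $L^p(\widetilde{M})$ for $1<p<2$, delivers $T \colon L^p(\widetilde{M}) \to L^p(\widetilde{M})$ for $1 < p \leq 2$. Conditions (i)--(v) are invariant under passage to the formal adjoint since each condition is symmetric under the exchange of the roles of the $x_k$ and $y_k$ variables, so $T^*$ is again a product Calder\'on--Zygmund operator; applying the range $1 < p' \leq 2$ to $T^*$ and dualizing yields the boundedness of $T$ on $L^p(\widetilde{M})$ for $2 \leq p < \infty$. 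Extension by density from $L^2 \cap L^p$ to $L^p$ is routine.

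The principal obstacle is the $H^1_{\mathrm{prod}} \to L^1$ estimate, and within it the geometric analysis on general open sets via Journ\'e's covering lemma in the present metric setting: one needs a sufficiently sharp control of the overflow of enlarged rectangles that matches the H\"older exponents $\varepsilon_1, \varepsilon_2$ supplied by conditions (iv)--(v), so that the resulting series over the maximal rectangles converges. Once this geometric bookkeeping is in place, the kernel estimates plug in in a routine way, and the remaining interpolation and duality arguments are structural.
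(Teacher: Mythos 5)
This proposition is quoted from \cite{HLL}; the paper supplies no proof of its own, so there is no internal argument to measure your proposal against, and the relevant comparison is with Han--Li--Lin. Your outline is the classical Fefferman--Journ\'e route (atomic decomposition of product $H^1$, Journ\'e covering lemma, $H^1\to L^1$, interpolation with $L^2$, duality) transplanted to the metric setting. The development in \cite{HLL} is organized differently: it rests on discrete Calder\'on reproducing formulae and product Littlewood--Paley square-function characterizations of $L^p$ and $H^p$, a product $T1$ criterion for the $L^2$ bound, and almost-orthogonality estimates between the kernel and approximations to the identity; the range $1<p<\infty$ then comes out of the square-function estimates and vector-valued maximal inequalities, with the atomic and covering machinery reserved for the endpoint $H^p\to L^p$ and $L^\infty \to \mathrm{BMO}$ results. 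Both routes are legitimate; yours leans on the covering geometry, theirs on the reproducing formula.

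Two points in your sketch need more care. First, for product Hardy spaces it is \emph{not} true in general that a uniform $L^1$ bound on atoms implies $H^1_{\mathrm{prod}}\to L^1$ boundedness: one must justify that $T$ commutes with the atomic series, and the standard way to do this is precisely to exploit the assumed $L^2$ boundedness at the structural level (atoms are $L^2$ functions and suitable partial sums converge in $L^2$). You invoke the $L^2$ bound only ``on the enlarged set''; without the additional structural use, the argument has the well-known gap that defeated the naive atomic approach in the Euclidean bi-parameter theory. Second, both the Journ\'e covering lemma for open subsets of $M^{(1)}\times M^{(2)}$ (with an arbitrarily small exponent loss that must be absorbed by $\varepsilon_1,\varepsilon_2$) and the identification of the real interpolation spaces between $H^1_{\mathrm{prod}}$ and $L^2$ with $L^p$ are themselves nontrivial theorems in this setting --- indeed they are part of what \cite{HLL} establishes --- so citing them as available black boxes is essentially circular unless you point to where they are proved.
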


For convenience in our application, we also provide the following version, where the difference is replaced by derivative in the regularity condition of the kernel.
This is a special case of the above proposition.

\begin{proposition} \label{mmm}
Suppose that $T$ is bounded on $L^{2}(\widetilde{M})$, where $\widetilde{M} = M^{(1)} \times M^{(2)}$.
Suppose further that the distribution kernel $K(x_1, x_2,y_1, y_2)$ of $T$ is $C^{\infty}$ away from the ``cross''
$\{(x_1, x_2, y_1, y_2): (x_1 - y_1)(x_2 -y_2) = 0\}$
and satisfies the following additional properties:\\[2pt]
(i)\enspace $\langle T(\varphi_1 \otimes \varphi_2), \psi_1 \otimes \psi_2 \rangle$ is given by
\begin{align*}
  & \iiiint  K (x_1,x_2,y_1,y_2) \varphi_1 (y_1)
\varphi_2 (y_2) \psi_1 (x_1) \psi_2 (x_2) \\
  &\hspace{4.6cm}
        \,d\mu^{(1)}(x_1) \,d\mu^{(2)}(x_2) \,d\mu^{(1)}(y_1) \,d\mu^{(2)}(y_2)
\end{align*}
whenever $\varphi_1, \psi_1 \in C_0^{\infty} (M^{(1)})$ have disjoint supports and $\varphi_2, \psi_2 \in C_0^{\infty} (M^{(2)})$ have disjoint supports.
\\[2pt]
(ii)\enspace
For all functions $f_2,g_2\in L^2(M^{(2)})$, there exists a constant $C>0$ independent of $f_2,g_2$ such that for all $\alpha,\beta \in \{0,1\}$,
\begin{align*}
&\left|\int_{M^{(2)}}\int_{M^{(2)}}(X_{\alpha}^{(1)})_{x_1}(X_{\beta}^{(1)})_{y_1}K(x_1, x_2,y_1, y_2)f_2(x_2)g_2(y_2)\,d\mu^{(2)}(x_2)\,d\mu^{(2)}(y_2) \right|\\
&\hspace{5.1cm} \leq \frac{C\rho^{(1)}(x_1,y_1)^{-|\alpha|-|\beta|}}{V^{(1)}(x_1, y_1)} \|f_2\|_2\|g_2\|_2.
\end{align*}
\\[2pt]
(iii)\enspace
The kernel $K(x_1, x_2, y_1, y_2)$ satisfies the estimate
\begin{align*}
&|(X_{\alpha}^{(1)})_{x_1} (X_{\beta}^{(1)})_{y_1} (X_{\alpha'}^{(2)})_{x_2}
 (Y^{(2)}_{\beta'})_{y_2} K(x_1, x_2, y_1, y_2)| \\
 &\hspace{4cm} \lesssim \frac{\rho^{(1)} (x_1, y_1)^{-|\alpha| - |\beta|} \rho^{(2)}(x_2, y_2)^{-|\alpha'|-|\beta'|}}{V^{(1)}(x_1, y_1)V^{(2)}(x_2,y_2)} .
\end{align*}
\\[2pt]
(iv)\enspace
Conditions (2) and (3) hold when the indices $1$ and $2$ are interchanged, that is, if the roles of $M^{(1)}$ and $M^{(2)}$ are interchanged.
\\[2pt]
(v)\enspace
Conditions (2) to (4) hold for the three transposes of $T$, that is, the operators that arise by interchanging $x_1$ and $y_1$, or interchanging $x_2$ and $y_2$, or interchanging both pairs of variables.
\\[2pt]
Then $T$ extends to a bounded operator on $L^{p}(\widetilde{M})$ when $1 <p <\infty$.
\end{proposition}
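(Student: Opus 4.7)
The aim is to reduce Proposition \ref{mmm} to Proposition \ref{mmm0}. Concretely, from the hypotheses (i)--(v) I will construct two operator-valued kernels $K_1: M^{(1)}\times M^{(1)} \setminus \{x_1 = y_1\} \to B(L^2(M^{(2)}))$ and $K_2$ (its analogue with the roles of the two factors swapped), verify that each $K_k$ satisfies the size and Hölder conditions (iii)--(v) in the definition of a product singular integral operator, and check the off-diagonal representation conditions (i)--(ii) of that definition. Since $T$ is assumed bounded on $L^2(\widetilde M)$, this will realise $T$ as a product Calderón--Zygmund operator, so Proposition \ref{mmm0} gives the $L^p$ boundedness.

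For fixed $x_1 \neq y_1$, I define $K_1(x_1,y_1)$ to be the operator on $L^2(M^{(2)})$ with distributional kernel $K(x_1,\cdot,y_1,\cdot)$. Hypothesis (ii) with $\alpha=\beta=0$ gives the bilinear bound that is precisely $\|K_1(x_1,y_1)\|_{L^2\to L^2}\leq C V^{(1)}(x_1,y_1)^{-1}$. Hypothesis (iii) with $|\alpha|=|\beta|=0$ and $|\alpha'|, |\beta'|\in\{0,1\}$ gives the pointwise size and first-order derivative bounds on the $M^{(2)}$ variables, namely $|K|\lesssim V^{(1)}(x_1,y_1)^{-1}V^{(2)}(x_2,y_2)^{-1}$ and an analogous estimate with $\rho^{(2)}(x_2,y_2)^{-1}$ for one $X^{(2)}$ derivative. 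The standard argument on Carnot--Carathéodory spaces — integrating along a horizontal curve joining $y_2$ to $y_2'$ whose length is comparable to $\rho^{(2)}(y_2,y_2')$, and using the doubling property \eqref{ccc} to compare $V^{(2)}(x_2,y_2)$ and $V^{(2)}(x_2,y_2')$ when $\rho^{(2)}(y_2,y_2')\leq \rho^{(2)}(x_2,y_2)/2$ — converts these $C^1$ bounds into Hölder (indeed Lipschitz) smoothness bounds that are exactly the single-parameter Calderón--Zygmund kernel conditions on $M^{(2)}$. Hence $|K_1(x_1,y_1)|_{CZ}\lesssim V^{(1)}(x_1,y_1)^{-1}$, and combining with the $L^2\to L^2$ bound yields condition (iii) of the product-singular-integral definition: $\|K_1(x_1,y_1)\|_{CZ}\leq C V^{(1)}(x_1,y_1)^{-1}$.

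For the smoothness of $K_1$ in its $M^{(1)}$ arguments (conditions (iv)--(v) of the product singular integral definition), I apply the same horizontal-curve argument in the $M^{(1)}$ variable. Hypothesis (ii) with $|\beta|=1$ yields $\|X^{(1)}_{y_1}K_1(x_1,y_1)\|_{L^2\to L^2}\lesssim \rho^{(1)}(x_1,y_1)^{-1}V^{(1)}(x_1,y_1)^{-1}$, so integrating this $L^2$-operator-valued derivative along a horizontal curve from $y_1$ to $y_1'$ (of length comparable to $\rho^{(1)}(y_1,y_1')$) and using the doubling property of $V^{(1)}$ produces the Lipschitz estimate
\begin{equation*}
  \|K_1(x_1,y_1) - K_1(x_1,y_1')\|_{L^2\to L^2}
  \leq C \frac{\rho^{(1)}(y_1,y_1')}{\rho^{(1)}(x_1,y_1)}V^{(1)}(x_1,y_1)^{-1}
\end{equation*}
whenever $\rho^{(1)}(y_1,y_1')\leq \rho^{(1)}(x_1,y_1)/2$. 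Repeating the argument using hypothesis (iii) with $|\beta|=1$ and $|\alpha'|, |\beta'|\in\{0,1\}$, and following the horizontal-curve-plus-doubling recipe now in the $M^{(2)}$ variables to turn the derivative bound into Hölder smoothness of the kernel, gives the matching Lipschitz estimate for $|K_1(x_1,y_1)-K_1(x_1,y_1')|_{CZ}$. Added together these yield condition (iv) with exponent $\varepsilon_1=1$; the same argument with $|\alpha|=1$ in place of $|\beta|=1$ gives condition (v). Conditions (iv)--(v) of Proposition \ref{mmm} produce the symmetric statements for $K_2$ on $M^{(2)}$, and condition (i) of Proposition \ref{mmm}, together with Fubini and the definition of $K_1(x_1,y_1)$ as the operator on $L^2(M^{(2)})$ with kernel $K(x_1,\cdot,y_1,\cdot)$, yields the representation conditions (i)--(ii) of the product-singular-integral definition on disjointly supported tensors.

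The main technical obstacle is the step that upgrades pointwise and $L^2$-operator-valued first-order vector-field derivative bounds into Lipschitz bounds in the control distance, uniformly in the remaining parameters. This requires combining the fundamental theorem of calculus along subunit curves with the doubling estimate \eqref{ccc} to replace $V^{(k)}(x_k,y_k)$ by $V^{(k)}(x_k,y_k')$ (and similarly with the roles of the two $M$-factors interchanged) throughout the argument, and it is needed in two flavours at once — for the operator norm in $B(L^2(M^{(2)}))$ via hypothesis (ii), and for the pointwise CZ-kernel constant on $M^{(2)}$ via hypothesis (iii). Once this is in place, every ingredient of the product-singular-integral definition is verified and Proposition \ref{mmm0} finishes the proof.
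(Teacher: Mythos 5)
Your proposal is correct and follows exactly the route the paper intends: the paper disposes of Proposition \ref{mmm} in one line by declaring it ``a special case of'' Proposition \ref{mmm0}, i.e.\ by the very reduction you carry out (packaging $K(x_1,\cdot,y_1,\cdot)$ as the operator-valued kernel $K_1(x_1,y_1)$, and upgrading the vector-field derivative bounds of (ii)--(iii) to the Lipschitz/H\"older difference conditions via integration along horizontal curves and doubling). You have simply written out the standard verification that the paper leaves implicit.
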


\begin{remark}
If $T_1$ and $T_2$ are (single-factor) singular integral operators on $M^{(1)}$ and  $M^{(2)}$, then $T = T_1 \otimes T_2$ satisfies the above assumptions.
Here $T^{\varphi_2,x_2}$ is equal to $T_1$ multiplied by the factor $T_2(\varphi_2)(x_2)$.
\end{remark}

\section{Functional calculus}
In this section, our aim is to prove two pointwise kernel estimates for the multipliers where the multiplier function is smooth enough and with compact support.

We begin with two lemmas which state some useful elementary estimates for functions.
For one-parameter version of these estimates, see Street \cite{Str1, Str2}.
We write $\MF$ to indicate the Fourier transformation: subscripts $1$ and $2$ indicate Fourier transforms with respect to the first or second variables only.

\begin{lemma} \label{elem}
Suppose $m: [0,\infty) \times [0,\infty) \to \C $ is a function with $\supp m \subset [1/4, 4] \times [1/4, 4]$, and let $\phi$ be a smooth function on $\R $ such that
\begin{align*}
\phi(\xi) =
\begin{cases}
0& \text{if  $|\xi| \leq 1/4$} \\
1& \text{if  $|\xi| \geq 1/2$}.
\end{cases}
\end{align*}
Set $\psi(\lambda_1, \lambda_2) := m(\lambda_1^2,\lambda_2^2)$.
\\[2pt]
(i)\enspace
Define $F_{r_1,r_2}(\lambda_1, \lambda_2):=  \psi (r_1\lambda_1, r_2 \lambda_2)$.
Then, for any $\varepsilon>0$ and any $N_1, N_2 \in \N$,
there exists a constant $C_{\varepsilon, N_1,N_2}$ such that
\begin{align*}
|F_{r_1,r_2} (\lambda_1, \lambda_2)|  \leq
C_{\varepsilon, N_1,N_2}\|m\|_{W^{(\varepsilon+1/2, \varepsilon+1/2),2}(\R \times \R )}(1 +
r_1\lambda_1 )^{-N_1}(1 + r_2\lambda_2 )^{-N_2}
\end{align*}
for all $r_1, r_2, \lambda_1,\lambda_2 >0$.
\\[2pt]
(ii)\enspace
Define $F_{s_1,r_1,r_2}(\lambda_1, \lambda_2)$ by the formula
\begin{align*}
\MF_1F_{s_1,r_1,r_2} (\xi_1, \lambda_2)
= \phi\left(\frac{\xi_1}{s_1}\right) \frac{1}{r_1}\MF_1{\psi}\left(\frac{\xi_1}{r_1},r_2 \lambda_2\right).
\end{align*}
Then, for any $\varepsilon>0$, any $N_1, N_2 \in \N$ and any $\sigma_1 \in \R $,
there exists a constant $C_{\varepsilon, \sigma_1, N_1,N_2}$ such that
\begin{align*}
|F_{s_1,r_1,r_2} (\lambda_1, \lambda_2)|  &\leq
C_{\varepsilon, \sigma_1, N_1,N_2}\|m\|_{W^{(\sigma_1, \varepsilon+1/2),2}(\R \times \R )} \\
&\quad \times \left[(1+s_1\lambda_1)^{-N_1}\left( 1+ \frac{s_1}{r_1} \right)^{(N_1 + \varepsilon + {1/2})-\sigma_1}\right]
(1 +r_2 \lambda_2 )^{-N_2}
\end{align*}
for all $r_1, r_2,s_1, \lambda_1,\lambda_2 >0$.
\\[2pt]
(iii)\enspace
Define $F_{s_2,r_1,r_2}(\lambda_1, \lambda_2)$ by the formula
\begin{align*}
\MF_2{ F_{s_2,r_1,r_2}} (\lambda_1, \xi_2)= \phi\left(\frac{\xi_2}{s_2}\right) \frac{1}{r_2}\MF_2{\psi}\left(r_1 \lambda_1, \frac{\xi_2}{r_2}\right).
\end{align*}
Then, for any $\varepsilon>0$, any $N_1, N_2 \in \N$, and any $\sigma_2 \in \R $,
there exists a constant $C_{\varepsilon, \sigma_2, N_1,N_2}$ such that
\begin{align*}
|F_{s_2,r_1,r_2} (\lambda_1, \lambda_2)|
&\leq
C_{\varepsilon, \sigma_2, N_1,N_2}\|m\|_{W^{(\varepsilon+1/2, \sigma_2),2}(\R \times \R )}\\
&\qquad \times (1 + r_1 \lambda_1 )^{-N_1}\left[(1+s_2 \lambda_2)^{-N_2}\left( 1+ \frac{s_2}{r_2} \right)^{(N_2 + \varepsilon + {1/2})-\sigma_2}
\right]
\end{align*}
for all $r_1, r_2,s_2, \lambda_1,\lambda_2 >0$.
\\[2pt]
(iv)\enspace
Define $F_{s_1,s_1,r_1,r_2}(\lambda_1, \lambda_2)$ by the formula
\begin{align*}
\MF{F_{s_1,s_2,r_1,r_2}} (\xi_1, \xi_2)= \phi\left(\frac{\xi_1}{s_1}\right) \phi\left(\frac{\xi_2}{s_2}\right)\frac{1}{r_1 r_2}
\MF{\psi}\left(\frac{\xi_1}{r_1}, \frac{\xi_2}{r_2}\right).
\end{align*}
Then, for any $\varepsilon>0$, any $N_1, N_2 \in \N$, and any $\sigma_1, \sigma_2 \in \R $,
there exists a constant $C_{\varepsilon, \sigma_1, \sigma_2, N_1,N_2}$ such that
\begin{align*}
|F_{\sigma_1, \sigma_2, s_1,s_2,r_1,r_2} (\lambda_1, \lambda_2)|  &\leq C_{\varepsilon,\sigma_1, \sigma_2, N_1,N_2}\|m\|_{W^{(\sigma_1, \sigma_2),2}(\R \times \R )}\\
 & \quad\times \left[(1+s_1 \lambda_1)^{-N_1}\left( 1+ \frac{s_1}{r_1} \right)^{(N_1 + \varepsilon + {1/2})-\sigma_1}\right]\\
&\quad \times \left[ (1+ s_2\lambda_2)^{-N_2}
\left( 1+ \frac{s_2}{r_2} \right)^{(N_2 + \varepsilon + {1/2})-\sigma_2}\right]
\end{align*}
for all $r_1, r_2,s_1, s_2,\lambda_1,\lambda_2 >0$.
\end{lemma}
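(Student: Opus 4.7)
The four bounds all follow the same template: Fourier inversion, Cauchy--Schwarz with a mixed Sobolev weight in frequency space, and integration by parts to extract the spatial decay. Two preliminary reductions simplify the task. First, since $\psi$ is supported in $\{1/2 \leq |\lambda_i| \leq 2\}$ and the map $\lambda \mapsto \lambda^2$ is a smooth diffeomorphism with all derivatives bounded on this set, composition yields the norm equivalence $\|\psi\|_{W^{(\sigma_1, \sigma_2), 2}} \lesssim \|m\|_{W^{(\sigma_1, \sigma_2), 2}}$; henceforth we work with $\psi$ in place of $m$. Second, iterating the one-dimensional Sobolev embedding $W^{\tau, 2}(\R) \hookrightarrow L^\infty(\R)$ (valid for $\tau > 1/2$) in each variable gives the mixed embedding $W^{(\tau_1, \tau_2), 2}(\R^2) \hookrightarrow L^\infty(\R^2)$ whenever $\tau_1, \tau_2 > 1/2$.

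Part (i) is then immediate: the mixed embedding gives $|F_{r_1, r_2}(\lambda_1, \lambda_2)| = |\psi(r_1\lambda_1, r_2\lambda_2)| \lesssim \|m\|_{W^{(\varepsilon+1/2, \varepsilon+1/2), 2}}$, and the decay factors $(1 + r_i \lambda_i)^{-N_i}$ are free because $\supp F_{r_1, r_2} \subset \{r_i |\lambda_i| \leq 2\}$, on which set they are bounded below by $3^{-N_i}$.

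For part (ii), set $g(\lambda_1, \lambda_2) := \psi(r_1\lambda_1, r_2\lambda_2)$. The defining formula exhibits $F_{s_1, r_1, r_2}$ as a Fourier multiplier $\phi(D_1/s_1)$ applied to $g$ in the first variable alone, so its $\lambda_2$-support is still contained in $\{|\lambda_2| \leq 2/r_2\}$; this absorbs $(1 + r_2\lambda_2)^{-N_2}$ as in (i). For the rest, Fourier inversion together with the change of variables $u_j = \xi_j / r_j$ yields
\[
F_{s_1, r_1, r_2}(\lambda_1, \lambda_2) = \frac{1}{(2\pi)^2} \iint e^{i(r_1\lambda_1 u_1 + r_2\lambda_2 u_2)} \phi(r_1 u_1/s_1)\, \MF\psi(u_1, u_2)\,du_1\,du_2.
\]
To extract $(s_1\lambda_1)^{N_1}$ decay, use the identity $\lambda_1^{N_1} e^{ir_1\lambda_1 u_1} = (ir_1)^{-N_1}\partial_{u_1}^{N_1} e^{ir_1\lambda_1 u_1}$, integrate by parts $N_1$ times in $u_1$, and apply the Leibniz rule; collecting powers of $s_1$ and $r_1$ gives
\[
s_1^{N_1}\lambda_1^{N_1} F_{s_1, r_1, r_2}(\lambda_1, \lambda_2) = c_{N_1} \sum_{k=0}^{N_1} \binom{N_1}{k} (s_1/r_1)^k J_k(\lambda_1, \lambda_2),
\]
where $J_k$ denotes the integral obtained by substituting $\phi^{(N_1-k)}(r_1 u_1 / s_1)$ for $\phi$ and $\partial_{u_1}^k \MF\psi$ for $\MF\psi$. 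Each $J_k$ is estimated by Cauchy--Schwarz in $(u_1, u_2)$ using the weight $(1+u_1^2)^{\sigma_1/2}(1+u_2^2)^{(\varepsilon+1/2)/2}$: the weighted $L^2$-norm of $\partial_{u_1}^k \MF\psi = \MF((-i\lambda_1)^k\psi)$ is $\lesssim \|\psi\|_{W^{(\sigma_1, \varepsilon+1/2), 2}}$ (multiplication by $\lambda_1^k$ is harmless on $\supp\psi$), while the dual weighted $L^2$-norm of $\phi^{(N_1-k)}(r_1 u_1/s_1)$ is $\lesssim (1+s_1/r_1)^{1/2 - \sigma_1}$ (its $u_1$-support has scale $s_1/r_1$, and $\int_\R (1+u_2^2)^{-(\varepsilon+1/2)}\,du_2 < \infty$ because $\varepsilon > 0$). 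Summing over $k$ yields $(1+s_1/r_1)^{N_1 + 1/2 - \sigma_1}$, which is sharper than the claimed exponent $N_1 + \varepsilon + 1/2 - \sigma_1$; combining with the trivial $N_1 = 0$ case completes (ii).

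Part (iii) follows from (ii) by exchanging the two variables. Part (iv) iterates the argument in both variables simultaneously: integrate by parts $N_j$ times in each $u_j$, apply Leibniz in each, and bound the resulting $(N_1+1)(N_2+1)$ terms by a double Cauchy--Schwarz with the weight $(1+u_1^2)^{\sigma_1/2}(1+u_2^2)^{\sigma_2/2}$. The two frequency cutoffs $\phi(r_j u_j/s_j)$ contribute independently, producing the product $(1+s_1/r_1)^{N_1+\varepsilon+1/2-\sigma_1}(1+s_2/r_2)^{N_2+\varepsilon+1/2-\sigma_2}$. The principal technical difficulty throughout is the Leibniz bookkeeping: one must verify that the worst-case term (where all $u$-derivatives fall on $\MF\psi$) still admits the claimed bound, while the mixed terms do not produce worse powers of $s_j/r_j$; the Sobolev weights dictated by $\sigma_j$ and $\varepsilon + 1/2$ must be high enough for the dual integrals to converge, which is precisely why the $\varepsilon$ appears in the final exponent.
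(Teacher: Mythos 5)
Your proof is correct and follows essentially the same route as the paper's: Fourier inversion, integration by parts against the oscillatory factor to extract $(s_1\lambda_1)^{-N_1}$, Cauchy--Schwarz against a Sobolev weight in frequency, the one-dimensional Sobolev embedding in the untouched variable, and the support observation that makes the factors $(1+r_i\lambda_i)^{-N_i}$ free; your unified treatment (combining the $N_1=0$ bound with the integrated-by-parts one) simply replaces the paper's four-case split, and your single two-dimensional Cauchy--Schwarz with the mixed weight replaces the paper's one-dimensional Cauchy--Schwarz followed by embedding in $\xi_2$. One small caveat: for the Leibniz term $k=N_1$, where $\phi$ itself rather than a derivative appears, the support is not of scale $s_1/r_1$ but is the unbounded set $\{|u_1|\gtrsim s_1/r_1\}$, so the dual-norm bound $(1+s_1/r_1)^{1/2-\sigma_1}$ rests on the tail estimate $\int_{|u|\geq a}(1+u^2)^{-\sigma_1}\,du\lesssim (1+a)^{1-2\sigma_1}$, which requires $\sigma_1>1/2$ --- harmless for the application, and the paper's own argument makes a comparable implicit restriction on $\sigma_1$.
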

\begin{proof}
We first prove (i). Using the fact that $W^{(\varepsilon+1/2, \varepsilon+1/2),2}(\R  \times \R )
\subset L^{\infty}(\R  \times \R ) \cap C(\R  \times \R )$  and
$\supp \psi \subset [1/2,2]\times [1/2,2]$, we see that
\begin{align*}
|F_{r_1,r_2}(\lambda_1, \lambda_2)| &\leq 3^{N_1} 3^{N_2}\|\psi
\|_{L^{\infty}(\R  \times \R )} (1 + |\lambda_1|)^{-N_1} (1+|\lambda_2|)^{-N_2}\\
& \leq C_{\varepsilon, N_1, N_2} \|\psi\|_{ W^{(\varepsilon+1/2, \varepsilon+1/2),2}(\R  \times \R )} (1 + |\lambda_1|)^{-N_1} (1+|\lambda_2|)^{-N_2}\\
& \leq C_{\varepsilon, N_1, N_2} \|m\|_{ W^{(\varepsilon+1/2, \varepsilon+1/2),2}(\R  \times \R )} (1 + |\lambda_1|)^{-N_1} (1+|\lambda_2|)^{-N_2}.
\end{align*}

Next we prove (ii).
Fix a function $\eta \in C_{0}^{\infty}(\R )$ such that $\eta(\lambda_2) =1$ for all $\lambda_2 \in [1/2, 2]$.
We consider four cases.

\subsubsection*{Case 1.} $s_1\lambda_1 \leq 1$ and $s_1 \leq r_1$.
By the Cauchy--Schwarz inequality,
\begin{align*}
\left|F_{s_1,r_1,r_2} (\lambda_1, \lambda_2) \right|
&= \left|\int_{\R } \phi\left(\frac{\xi_1}{s_1}\right)\frac{1}{r_1}\MF_1{\psi}
\left(\frac{\xi_1}{r_1}, r_2 \lambda_2\right)e^{i \lambda_1 \xi_1}d\xi_1 \right|\\
&= \frac{s_1}{r_1}\left|\int_{|\xi_1| \geq {1/4}} \phi(\xi_1)\MF_1{\psi}\left( \frac{s_1\xi_1}{r_1},
r_2 \lambda_2\right)
e^{i s_1 \lambda_1 \xi_1}d\xi_1\right|\\
&\lesssim \frac{s_1}{r_1} \int_{|\xi_1| \geq {1/4}}
\left|\MF_1{\psi}\left(\frac{s_1\xi_1}{r_1}, r_2\lambda_2\right)\right| d\xi_1  .
\end{align*}
Now $\supp \psi \subset [1/2,2] \times [1/2,2]$, so $\supp \MF_1{\psi}\bigl(\frac{s_1\xi_1}{r_1}, \cdot \bigr) \subseteq [1/2,2]$
for all $s_1, r_1 >0$ and $\xi_1 \in \R $.
Hence $\MF_1{\psi}\bigl(\frac{s_1\xi_1}{r_1}, \lambda_2 r_2 \bigr)
= \eta(\lambda_2 r_2) \MF_1{\psi}\bigl(\frac{s_1\xi_1}{r_1}, \lambda_2r_2 \bigr)$, and it follows that
\begin{align*}
\left|F_{s_1,r_1,r_2} (\lambda_1, \lambda_2) \right|& \lesssim \frac{s_1}{r_1} |\eta(r_2 \lambda_2)|
 \int_{|\xi_1| \geq {1/4}}
\left|\MF_1{\psi}\left(\frac{s_1\xi_1}{r_1},r_2^2\lambda_2\right)
\right| d\xi_1 \\
&\leq \frac{s_1}{r_1}|\eta(r_2 \lambda_2)|\int_{|\xi_1| \geq {1/4}}
\left(1+ \left| \frac{s_1\xi_1}{r_1}\right| \right)^{\sigma_1 -
(\varepsilon + {1/2})}
\left|\MF_1{\psi}\left(\frac{s_1 \xi_1}{r_1},r_2^2\lambda_2\right)
\right| d\xi_1 \\
&\leq |\eta(r_2 \lambda_2)|\left(\int_{|\xi_1| \geq {1/4}}
\left|\left(1+ \left| \frac{s_1\xi_1}{r_1}\right| \right)^{\sigma_1}
\MF_1{\psi}\left(\frac{s_1\xi_1}{r_1},r_2^2\lambda_2\right)
\right|^{2}  d \left( \frac{s_1\xi_1}{r_1}\right)\right)^{1/2} \\
&\hspace{4cm}\times \left( \int_{|\xi_1| \geq {1/4}} \left|1+  \frac{s\xi_1}{r_1}\right|^{ -(1 + 2\varepsilon)}
 d \left( \frac{s_1\xi_1}{r_1}\right)\right)^{1/2} \\
& \lesssim |\eta(r_2\lambda_2)|\left(\int_{\R } \left|(1+ |\xi |)^{\sigma_1}
\MF_1{\psi}\left(\xi_1,r_2^2\lambda_2\right)
\right|^{2}  d \xi\right)^{1/2} .
\end{align*}
Using the continuous inclusion $W^{\varepsilon+1/2,2}(\R ) \subset L^{\infty}(\R )$, we see that
\begin{align*}
\left|\MF_1{\psi}\left( \xi_1,r_2^2\lambda_2
\right)\right|^2 \lesssim \|\MF_1{\psi}(\xi_1, \cdot)\|_{L^{\infty}(\R )}^2
 \lesssim \int_{\R }\big|(1 + |\xi_2|)^{\varepsilon + {1/2}}  \MF{\psi}(\xi_1, \xi_2)\big|^2 d\xi_2.
\end{align*}
Hence by Fubini's theorem,
\begin{align*}
&|F_{s_1,r_1,r_2} (\lambda_1, \lambda_2) | \\
&\qquad\lesssim  |\eta(r_2 \lambda_2)| \left[\int_{\R \times \R }
\left|\left(1+\left|\xi_1\right|\right)^{\sigma_1 }(1+|\xi_2|)^{
\varepsilon + {1/2}}\MF{\psi}(\xi_1, \xi_2)\right|^{2}  d\xi_1d\xi_2\right]^{1/2}\\
&\qquad\lesssim  (1 +r_2^2 \lambda_2)^{-N_2} \|\psi\|_{W^{(\sigma_1, \varepsilon+1/2),2}(\R  \times \R )}\\
&\qquad\lesssim \|m\|_{W^{(\sigma_1, \varepsilon+1/2),2}(\R \times \R )}
\left[(1+s_1 \lambda_1)^{-N_1}\left( 1+ \frac{s_1}{r_1} \right)^{(N_1 + \varepsilon + {1/2})-\sigma_1}\right]
(1 + r_2 \lambda_2)^{-N_2},
\end{align*}
 where for the last inequality we used the estimate $ s_1 \lambda_1 \sim 1 + s_1 \lambda_1$ and $1+ {s_1}/{r_1} \sim 1$.

\subsubsection*{Case 2.} $s_1\lambda_1 \geq 1$ and ${s_1} \leq {r_1}$.
In this case, by elementary calculus,
\begin{align*}
\left|F_{s_1,r_1,r_2} (\lambda_1, \lambda_2) \right|
&= \left|\int_{\R } \phi\left(\frac{\xi_1}{s_1}\right)\frac{1}{r_1}\MF_1{\psi}
\left(\frac{\xi_1}{r_1},r_2^2 \lambda_2\right)e^{i \lambda_1 \xi_1}d\xi_1 \right|\\
&= \frac{s_1}{r_1}\left|\int_{|\xi_1| \geq {1/4}} \phi(\xi_1)\MF_1{\psi}\left( \frac{s_1\xi_1}{r_1},r_2\lambda_2\right)
e^{i s_1 \lambda_1 \xi_1}d\xi_1\right|\\
&= (s_1 \lambda_1)^{-N_1} \frac{s_1}{r_1}\left|  \int_{|\xi_1| \geq {1/4}} \phi(\xi_1)\MF_1{\psi}
\left( \frac{s_1 \xi_1}{r_1},r_2\lambda_2\right)\partial_{\xi_1}^{N_1}[e^{i s_1 \lambda_1 \xi_1}]d\xi_1\right|\\
&= (s_1 \lambda_1)^{-N_1} \frac{s_1}{r_1}  \left|\int_{|\xi_1| \geq {1/4}} \partial_{\xi_1}^{N_1}\left[ \phi(\xi_1)
\MF_1{\psi}\left( \frac{s_1\xi_1}{r_1},r_2\lambda_2\right)\right]
e^{i s_1 \lambda_1 \xi_1}d\xi_1\right|\\
& \leq (s_1 \lambda_1)^{-N_1} \sum_{k_1 + k_2 =N_1} \left( \frac{s_1}{r_1}\right)^{1+ k_2}\\
&\quad \quad \times  \left|\int_{|\xi_1| \geq {1/4}}
\left[\partial_{\xi_1}^{N_1} \phi (\xi_1)\right]\left[(\partial_{\xi_1}^{k_2} \MF_1{\psi})\left(\frac{s_1\xi_1}{r_1},r_2\lambda_2
\right)\right] e^{i s_1\lambda_1 \xi_1}d\xi_1\right|\\
&= (s_1 \lambda_1)^{-N_1} \sum_{k_1 + k_2 =N_1} \left( \frac{s_1}{r_1}\right)^{1+ k_2}|\eta(r_2\lambda_2)|\\
&\hspace{2cm} \times  \left|\int_{|\xi_1| \geq {1/4}}
\left[\partial_{\xi_1}^{k_1} \phi (\xi_1)\right]\MF_1{\varphi_{k_2}}\left(\frac{s_1\xi_1}{r_1},r_2\lambda_2
\right) e^{i s_1\lambda_1 \xi_1}d\xi_1\right|,
\end{align*}
where $\varphi_{k_2}$ is the function on $\R  \times \R $ given by
$\varphi_{k_2}(\lambda_1, \lambda_2)=(-i \lambda_1)^{k_2} \psi(\lambda_1,\lambda_2)$.
Much as in Case~1, $\MF_1{\varphi_{k_2}}({s_1\xi_1}/{r_1},r_2\lambda_2)
= \eta(\lambda_2 r_2 ) \MF_1{\varphi_{k_2}}({s_1\xi_1}/{r_1},r_2\lambda_2)$.
It follows that
\begin{align*}
&\left|F_{s_1,r_1,r_2} (\lambda_1, \lambda_2) \right|\\
&\qquad\leq (s_1 \lambda_1)^{-N_1} \sum_{k_1 + k_2 =N_1} \left( \frac{s_1}{r_1}\right)^{1+ k_2}|\eta(r_2\lambda_2)
| \int_{|\xi_1| \geq {1/4}}
\left|\MF_1{\varphi_{k_2}}\left(\frac{s_1\xi_1}{r_1}, r_2\lambda_2\right)
\right| d\xi_1 \\
&\qquad\leq (s_1 \lambda_1)^{-N_1} \sum_{k_1 + k_2 =N_1} \left( \frac{s_1}{r_1}\right)^{1+ k_2}|\eta(r_2\lambda_2)| \\
&\qquad\hspace{2cm} \times \int_{|\xi_1| \geq {1/4}}\left(1+\left|\frac{s_1\xi_1}{r_1}\right|\right)^{\sigma_1 - (\varepsilon + {1/2})}
\left|\MF_1{\varphi_{k_2}}\left(\frac{s_1\xi_1}{r_1},r_2\lambda_2\right)
\right| d\xi_1 \\
&\qquad\leq  (s_1 \lambda_1)^{-N_1} \sum_{k_1 + k_2 =N_1} \left( \frac{s_1}{r_1}\right)^{k_2 }|\eta(r_2\lambda_2)| \\
&\qquad\hspace{2cm} \times \left[\int_{|\xi_1| \geq {1/4}}
\left|\left(1+\left|\frac{s_1\xi_1}{r_1}\right|\right)^{\sigma_1 }\MF_1{\varphi_{k_2}}\left(\frac{s_1\xi_1}{r_1},r_2\lambda_2
\right)\right|^{2}  d\left(\frac{s_1\xi_1}{r_1}\right)\right]^{1/2}\\
&\qquad\hspace{2cm} \times \left[\int_{\R } \left(1+ \left| \frac{s_1\xi_1}{r_1}\right| \right)^{-1-2\varepsilon} d\left(\frac{s_1\xi_1}{r_1}\right)\right]^{1/2}\\
&\qquad\leq  (s_1 \lambda_1)^{-N_1} \sum_{k_1 + k_2 =N_1} \left( \frac{s_1}{r_1}\right)^{k_2 }|\eta(r_2\lambda_2)| \\
&\qquad\hspace{2cm} \times \left[\int_{\R }
\left|\left(1+\left|\xi_1\right|\right)^{\sigma_1 }\MF_1{\varphi_{k_2}}(\xi_1, r_2\lambda_2)\right|^{2}  d\xi_1\right]^{1/2}\\
&\qquad\lesssim  (s_1 \lambda_1)^{-N_1} (1+r_2\lambda_2)^{-N_2}\|\varphi_{k_2}\|_{W^{(\sigma_1, \varepsilon+1/2),2}(\R  \times \R )}\\
&\qquad\lesssim  (s_1 \lambda_1)^{-N_1} (1+r_2\lambda_2)^{-N_2}\|\psi\|_{W^{(\sigma_1, \varepsilon+1/2),2}(\R  \times \R )}\\
&\qquad\lesssim \|m\|_{W^{(\sigma_1, \varepsilon+1/2),2}(\R \times \R )}
\left[(1+s_1 \lambda_1)^{-N_1}\left( 1+ \frac{s_1}{r_1} \right)^{(N_1 + \varepsilon + {1/2})-\sigma_1}\right] (1 +r_2\lambda_2)^{-N_2}
\end{align*}
where for the last inequality we used the estimate $ 1+ s_1 \lambda_1 \sim 1$ and $1+ {s_1}/{r_1} \sim 1$.

\subsubsection*{Case 3.} $ s_1\lambda_1 \leq 1$ and $s_1 \geq r_1$.
This case is similar to Case 2, and we skip the details.

\subsubsection*{Case 4.} $s_1 \lambda_1 \geq 1$ and ${s_1} \geq {r_1}$.
Taking $\varphi_{k_2}$ as in Case 2, we see that
\begin{align*}
&\left|F_{s_1,r_1,r_2} (\lambda_1, \lambda_2) \right| \\
&\qquad= \left|\int_{\R } \phi\left(\frac{\xi_1}{s_1}\right)\frac{1}{r_1}\MF_1{\psi}
\left(\frac{\xi_1}{r_1},r_2\lambda_2\right)e^{i \lambda_1 \xi_1}d\xi_1 \right|\\
&\qquad= \frac{s_1}{r_1}\left|\int_{|\xi_1| \geq {1/4}} \phi(\xi_1)\MF_1{\psi}\left( \frac{s_1\xi_1}{r_1},r_2\lambda_2\right)
e^{i s_1 \lambda_1 \xi_1}d\xi_1\right|\\
&\qquad= (s_1 \lambda_1)^{-N_1} \frac{s_1}{r_1}\left|  \int_{|\xi_1| \geq {1/4}} \phi(\xi_1)\MF_1{\psi}
\left( \frac{s_1  \xi_1}{r_1},r_2\lambda_2\right)\partial_{\xi_1}^{N_1}[e^{i s_1 \lambda_1 \xi_1}]d\xi_1\right|\\
&\qquad= (s_1 \lambda_1)^{-N_1} \frac{s_1}{r_1}  \left|\int_{|\xi_1| \geq {1/4}} \partial_{\xi_1}^{N_1}\left[ \phi(\xi_1)
\MF_1{\psi}\left( \frac{s_1\xi_1}{r_1},r_2\lambda_2\right)\right]
e^{i s_1 \lambda_1 \xi_1}d\xi_1\right|\\
&\qquad\leq (s_1 \lambda_1)^{-N_1} \sum_{k_1 + k_2 =N_1} \left( \frac{s_1}{r_1}\right)^{1+ k_2}\\
&\qquad\qquad \times  \left|\int_{|\xi_1| \geq {1/4}}
\left[\partial_{\xi_1}^{N_1} \phi (\xi_1)\right]\left[(\partial_{\xi_1}^{k_2} \MF_1{\psi})\left(\frac{s_1\xi_1}{r_1},r_2\lambda_2
\right)\right] e^{i s_1\lambda_1 \xi_1}d\xi_1\right|\\
&\qquad= (s_1 \lambda_1)^{-N_1} \sum_{k_1 + k_2 =N_1} \left( \frac{s_1}{r_1}\right)^{1+ k_2}|\eta(r_2\lambda_2)|\\
&\qquad\qquad \times  \left|\int_{|\xi_1| \geq {1/4}}
\left[\partial_{\xi_1}^{k_1} \phi (\xi_1)\right]\MF_1{\varphi_{k_2}}\left(\frac{s_1 \xi_1}{r_1},r_2\lambda_2
\right) e^{i s_1\lambda_1 \xi_1}d\xi_1\right|\\
&\qquad\leq (s_1 \lambda_1)^{-N_1} \sum_{k_1 + k_2 =N_1} \left( \frac{s}{r_1}\right)^{1+ k_2}|\eta(r_2\lambda_2)| \int_{|\xi_1| \geq {1/4}}
\left|\MF_1{\varphi_{k_2}}\left(\frac{s_1\xi_1}{r_1},r_2\lambda_2\right)
\right| d\xi_1 \\
&\qquad\lesssim  (s_1 \lambda_1)^{-N_1} \sum_{k_1 + k_2 =N_1} \left( \frac{s_1}{r_1}\right)^{1+ k_2-\sigma_1 +\varepsilon + {1/2}} |\eta(r_2\lambda_2)| \\
&\qquad\qquad \times\int_{|\xi_1| \geq {1/4}}
\left(1+\left|\frac{s_1 \xi_1}{r_1}\right|\right)^{\sigma_1 -
(\varepsilon + {1/2}) }\left|\MF_1{\varphi_{k_2}}\left(\frac{s_1 \xi_1}{r_1},r_2\lambda_2
\right) \right|d\xi_1\\
&\qquad\leq  (s_1 \lambda_1)^{-N_1} \sum_{k_1 + k_2 =N_1} \left( \frac{s_1}{r_1}\right)^{k_2 +
\varepsilon + {1/2} - \sigma_1 }|\eta(r_2\lambda_2)| \\
&\qquad\qquad \times \left[\int_{|\xi_1| \geq {1/4}}
\left|\left(1+\left|\frac{s_1\xi_1}{r_1}\right|\right)^{\sigma_1 }\MF_1{\varphi_{k_2}}\left(\frac{s_1\xi_1}{r_1},r_2\lambda_2
\right)\right|^{2}  d\left(\frac{s_1\xi_1}{r_1}\right)\right]^{1/2}\\
&\qquad\qquad \times \left[\int_{\R } \left(1+ \left| \frac{s_1\xi_1}{r_1}\right| \right)^{-1-2\varepsilon} d\left(\frac{s_1 \xi_1}{r_1}\right)\right]^{1/2}\\
&\qquad\lesssim  (s_1\lambda_1)^{-N_1} \sum_{k_1 + k_2 =N_1} \left( \frac{s_1}{r_1}\right)^{k_2+\varepsilon + {1/2} -\sigma_1 } |\eta(r_2\lambda_2)| \\
&\qquad\qquad \times \left[\int_{\R }
\left|\left(1+\left|\xi_1\right|\right)^{\sigma_1 }\MF_1{\varphi_{k_2}}(\xi_1,r_2\lambda_2)\right|^{2}  d\xi_1\right]^{1/2}.
\end{align*}
Since $W^{\varepsilon+1/2, 2}(\R ) \subset L^{\infty}(\R )$,
\begin{align*}
\left|\MF_1{\varphi_{k_2}}\left( \xi_1,r_2\lambda_2
\right)\right|^2 \lesssim \|\MF_1{\varphi_{k_2}}(\xi_1, \cdot)\|_{L^{\infty}(\R )}^2
 \lesssim \int_{\R }\big|(1 + |\xi_2|)^{\varepsilon + {1/2}}  \MF{\varphi_{k_2}}(\xi_1, \xi_2)\big|^2 d\xi_2.
\end{align*}
Therefore, it follows from Fubini's theorem that
\begin{align*}
&|F_{s_1,r_1,r_2} (\lambda_1, \lambda_2) |  \\
&\qquad\lesssim  (s_1\lambda_1)^{-N_1} \sum_{k_1 + k_2 =N_1} \left( \frac{s_1}{r_1}\right)^{k_2+\varepsilon + {1/2} -\sigma_1 }|\eta(r_2\lambda_2)| \\
&\hspace{4cm} \times  \left[\int_{\R \times \R }
\left|\left(1+\left|\xi_1\right|\right)^{\sigma_1 }(1+|\xi_2|)^{\varepsilon + {1/2}}\MF{\varphi_{k_2}}(\xi_1, \xi_2)\right|^{2}  d\xi_1d\xi_2\right]^{1/2}\\
&\qquad\lesssim (s_1 \lambda_1)^{-N_1} \left(1+\frac{s_1}{r_1} \right)^{N_1 + \varepsilon + {1/2} -\sigma_1} (1 +r_2\lambda_2)^{-N_2} \|\varphi_{k_2}\|_{W^{(\sigma_1, \varepsilon+1/2),2}(\R  \times \R )}\\
&\qquad\lesssim \|m\|_{W^{(\sigma_1, \varepsilon+1/2),2}(\R \times \R )}
\left[(1+s_1 \lambda_1)^{-N_1}\left( 1+ \frac{s_1}{r_1} \right)^{(N_1 + \varepsilon + {1/2})-\sigma_1}\right]
(1 +r_2\lambda_2)^{-N_2}.
\end{align*}

The proof of (iii) is parallel to that of (ii), and we omit the details.

To prove (iv), we consider sixteen cases depending on whether each of $\lambda_1 s_1, s_1/r_1, \lambda_2 s_2$ and $s_2/r_2$ is small or large.
Each of these cases can be handled by using a similar argument to that in the proof of (ii) and we omit the details here.
\end{proof}

If $m(\cdot,\cdot)$ has compact support with respect to the first variable only, then the following result holds;
its proof is similar to that of Lemma \ref{elem} (ii) and so will be omitted.

\begin{lemma} \label{elem1}
Suppose $m: [0,\infty) \times [0,\infty) \to \C $ is a function with
$\supp m \subset [1/4, 4] \times [0,\infty)$.
Let $\phi$ be a smooth cut-off function on $\R $ such that
\begin{align*}
\phi(\xi) =
\begin{cases}
0& \text{if  $|\xi| \leq 1/4$} \\
1& \text{if  $|\xi| \geq 1/2$}.
\end{cases}
\end{align*}
Set $\psi(\lambda_1, \lambda_2) := m(\lambda_1^2,\lambda_2^2)$ and define $F_{s_1,r_1,r_2}(\lambda_1, \lambda_2)$ by the formula
\begin{align*}
\MF_1F_{s_1,r_1} (\xi_1, \lambda_2) = \phi\left(\frac{\xi_1}{s_1}\right) \frac{1}{r_1}\MF_1{\psi}\left(\frac{\xi_1}{r_1},  \lambda_2\right),
\end{align*}
where $\MF_1$ is the Fourier transform with respect to the first variable.
Then, for any $\varepsilon>0$, any $N_1, N_2 \in \N$ and any $\sigma_1 \in \R $,
there exists a constant $C_{\varepsilon, \sigma_1, N_1,N_2}$ such that
\begin{align*}
|F_{s_1,r_1} (\lambda_1, \lambda_2)|  &\leq
C_{\varepsilon, \sigma_1, N_1,N_2}\|m\|_{W^{\sigma_1,2}L^\infty(\R \times \R )}  \left[(1+\lambda_1 s_1)^{-N_1}\left( 1+ \frac{s_1}{r_1} \right)^{(N_1 + \varepsilon + {1/2})-\sigma_1}\right]
\end{align*}
for all $r_1,s_1, \lambda_1,\lambda_2 >0$.  Here $\|m\|_{W^{\sigma_1, 2}L^\infty (\R  \times \R )}=
\sup_{\lambda_2 \in \R } \|m(\cdot, \lambda_2)\|_{W^{\sigma_1, 2}(\R )}$.
\end{lemma}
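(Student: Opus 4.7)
The plan is to mimic the proof of Lemma~\ref{elem}(ii), simplified by the fact that we no longer need to extract decay in $\lambda_2$: that direction is controlled trivially via the mixed norm $\|m\|_{W^{\sigma_1,2}L^\infty}$. As before, I would start by writing the inverse Fourier transform in $\xi_1$:
\begin{equation*}
F_{s_1,r_1}(\lambda_1,\lambda_2)
 = \frac{s_1}{r_1}\int_{|\xi_1|\geq 1/4}\phi(\xi_1)\,\MF_1\psi\!\left(\frac{s_1\xi_1}{r_1},\lambda_2\right)e^{is_1\lambda_1\xi_1}\,d\xi_1,
\end{equation*}
after the change of variables $\xi_1 \mapsto s_1\xi_1$ and using $\supp\phi\subset\{|\xi_1|\geq 1/4\}$.

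I would then split into the four cases according to whether $s_1\lambda_1\lesssim 1$ or $\gtrsim 1$, and whether $s_1/r_1\lesssim 1$ or $\gtrsim 1$. In the ``low frequency'' cases $s_1\lambda_1\lesssim 1$, I estimate directly. In the ``high frequency'' cases $s_1\lambda_1\gtrsim 1$, I integrate by parts $N_1$ times in $\xi_1$ against $e^{is_1\lambda_1\xi_1}$; this produces a factor $(s_1\lambda_1)^{-N_1}$ and distributes derivatives between $\phi$ and $\MF_1\psi(\cdot/r_1\cdot s_1,\lambda_2)$, leading to an auxiliary function $\varphi_{k_2}(\lambda_1,\lambda_2):=(-i\lambda_1)^{k_2}\psi(\lambda_1,\lambda_2)$ exactly as before. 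Crucially, since $\supp m(\cdot,\lambda_2^2)\subset[1/4,4]$, the support of $\MF_1\psi(s_1\xi_1/r_1,\lambda_2)$ is controlled uniformly in $\lambda_2$, so the integral over $\xi_1$ is genuinely supported where $|s_1\xi_1/r_1|\sim 1$ once we bring in the $(1+|s_1\xi_1/r_1|)^{\sigma_1-(\varepsilon+1/2)}$ weight. Applying Cauchy--Schwarz with the weight $(1+|s_1\xi_1/r_1|)^{\sigma_1}$ then converts the integrand into the $W^{\sigma_1,2}$ norm (in the first variable) of $\MF_1\psi(\cdot,\lambda_2)$, while the leftover $(s_1/r_1)$ powers reassemble into the factor $(1+s_1/r_1)^{(N_1+\varepsilon+1/2)-\sigma_1}$.

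The one place where this proof genuinely departs from Lemma~\ref{elem}(ii) is the second variable. Since $\psi$ is not compactly supported in $\lambda_2$, I cannot insert an $\eta(r_2\lambda_2)$ cutoff nor extract any $(1+r_2\lambda_2)^{-N_2}$ decay. Instead, I take a plain pointwise supremum in $\lambda_2$, which is why the right-hand side involves the mixed norm
\begin{equation*}
\|m\|_{W^{\sigma_1,2}L^\infty(\R\times\R)} = \sup_{\lambda_2\in\R}\|m(\cdot,\lambda_2)\|_{W^{\sigma_1,2}(\R)}
\end{equation*}
rather than the product Sobolev norm. The Sobolev embedding $W^{\varepsilon+1/2,2}(\R)\subset L^\infty(\R)$ used in Lemma~\ref{elem}(ii) to exchange $L^\infty$ control in $\lambda_2$ for an $L^2$ control no longer appears; we simply work with the uniform bound in $\lambda_2$ throughout.

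The main bookkeeping obstacle is, as in Lemma~\ref{elem}(ii), keeping track of the exponents of $s_1/r_1$ in the four cases and verifying that the final bound can be written uniformly as
$(1+s_1\lambda_1)^{-N_1}(1+s_1/r_1)^{(N_1+\varepsilon+1/2)-\sigma_1}$,
using $1+s_1\lambda_1\sim\max(1,s_1\lambda_1)$ and $1+s_1/r_1\sim\max(1,s_1/r_1)$. No new ideas are required beyond Lemma~\ref{elem}(ii), which is why the author omits the details.
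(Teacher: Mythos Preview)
Your proposal is correct and follows essentially the same approach as the paper, which omits the proof precisely because it is a straightforward adaptation of Lemma~\ref{elem}(ii). Your identification of the key modification---replacing the $\eta(r_2\lambda_2)$ cutoff and the Sobolev embedding in the second variable by a direct supremum over $\lambda_2$, thereby landing on the mixed norm $\|m\|_{W^{\sigma_1,2}L^\infty}$---is exactly the point.
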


The following result concerning finite speed propagation was proved by Melrose \cite{Mel}.
\begin{lemma} \label{lem}
For $k=1,2$, there exists a constant $\kappa_k$ such that
\begin{align*}
\supp\left( K_{\cos (t\Box_b^{(k)})}\right) \subset \{(x,y) \in M^{(k)} \times M^{(k)}: \rho^{(k)}(x,y) \leq \kappa_k t\}.
\end{align*}
\end{lemma}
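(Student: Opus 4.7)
The statement is a version of the classical finite propagation speed theorem for the wave equation associated with a sub-Laplacian; it is proved in essentially this form by Melrose. The natural reading is that one studies the unitary group $\cos(t\sqrt{\Box_b^{(k)}})$ (interpreted via the spectral theorem), or equivalently the solution to the wave equation $\partial_t^2 u + \Box_b^{(k)} u = 0$, and shows that its propagation speed with respect to the Carnot--Carath\'eodory distance $\rho^{(k)}$ is finite. The plan is to reduce the claim to a standard energy-integral argument adapted to the sub-Riemannian geometry generated by the horizontal vector fields $X_1^{(k)}$, $X_2^{(k)}$.

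First I would recall that $\Box_b^{(k)} = (X_1^{(k)})^*X_1^{(k)} + (X_2^{(k)})^*X_2^{(k)}$ has principal symbol, in the appropriate subelliptic sense, given by the square of the horizontal gradient, whose associated distance function is exactly $\rho^{(k)}$. Let $f \in C_0^\infty(M^{(k)})$ with support in a closed set $E$, and set $u(t,x) = \cos(t\sqrt{\Box_b^{(k)}})f(x)$, so that $u$ solves the wave equation with initial data $u(0,\cdot)=f$, $\partial_t u(0,\cdot)=0$. For a constant $\kappa_k$ to be determined, define
\[
\mathcal{E}(t) = \frac{1}{2}\int_{U(t)} \Bigl( |\partial_t u|^2 + \sum_{j=1}^{2} |X_j^{(k)} u|^2 \Bigr)\, d\mu^{(k)},
\]
where $U(t) = \{x \in M^{(k)} : \rho^{(k)}(x,E) > \kappa_k t\}$ is the exterior of the $\rho^{(k)}$-expansion of $E$. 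Differentiating $\mathcal{E}(t)$ and integrating by parts in the bulk term produces a boundary contribution from $\partial U(t)$ plus a volume term that vanishes by the wave equation. The boundary contribution can be estimated by Cauchy--Schwarz: the $X_j^{(k)}$ components along any horizontal curve parametrising the boundary have speed at most one in the CC-sense, so choosing $\kappa_k$ larger than this intrinsic propagation speed forces $\mathcal{E}'(t) \leq 0$. Since $\mathcal{E}(0)=0$, one gets $\mathcal{E}(t) = 0$ and hence $u(t,\cdot)\equiv 0$ on $U(t)$.

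Applying this with $f$ an approximate delta at $y$ and using self-adjointness to transfer the support condition to both variables then yields
\[
\operatorname{supp}\bigl(K_{\cos(t\sqrt{\Box_b^{(k)}})}\bigr) \subset \{(x,y): \rho^{(k)}(x,y)\leq \kappa_k t\},
\]
which is the desired statement. The technical heart of the argument, and the step I would expect to be the main obstacle, is the justification of the boundary integration by parts when the boundary $\partial U(t)$ of the CC-ball complement is only Lipschitz (in the Euclidean sense) and can be quite irregular; Melrose bypasses this by using H\"ormander's propagation-of-singularities theorem and a microlocal formulation, which automatically yields the correct sub-Riemannian propagation speed. An alternative route, due to Sikora and others, writes $\cos(t\sqrt{L})$ as a spectral integral, uses the Davies--Gaffney $L^2$--$L^2$ off-diagonal estimates for $e^{-t\Box_b^{(k)}}$ following from \eqref{e7.2}, and then applies the Phragm\'en--Lindel\"of method to deduce the sharp support estimate; this avoids pointwise boundary regularity issues entirely and is the most convenient approach in the present setting.
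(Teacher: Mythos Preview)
The paper does not actually prove this lemma: it simply attributes the result to Melrose \cite{Mel} and moves on. So there is no ``paper's own proof'' to compare against beyond the citation. Your proposal therefore goes well beyond what the paper does, and your identification of the two standard routes --- Melrose's microlocal/propagation-of-singularities argument, and the Davies--Gaffney/Phragm\'en--Lindel\"of approach via off-diagonal heat kernel estimates --- is accurate and appropriate.

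One small point: you correctly read the operator as $\cos(t\sqrt{\Box_b^{(k)}})$ rather than the literal $\cos(t\Box_b^{(k)})$ appearing in the statement; this is confirmed by the way the lemma is applied in Proposition~\ref{fsp}. Your energy-integral sketch is morally right but, as you yourself note, the regularity of CC-spheres makes it awkward to carry out directly; of the two alternatives you mention, the Davies--Gaffney route is indeed the cleanest here, though one should be slightly careful because the heat kernel bound \eqref{e7.2} is not a standard Gaussian upper bound (it has the volume factor at the diagonal distance rather than at scale $\sqrt{s}$). The Davies--Gaffney $L^2$--$L^2$ off-diagonal estimate still follows, but it requires a short argument rather than a direct citation.
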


This result has the following bi-parameter analogue.
\begin{proposition} \label{fsp}
Let $F$ be a function on $\R \times \R $ such that $F(-\lambda_1, -\lambda_2) = F(\lambda_1, \lambda_2)$.
\\[2pt]
(i)\enspace
If $\supp \MF{F} \subset [-r_1, r_1] \times [-r_2, r_2]$, then
\begin{align*}
\supp K_{F(\sqrt{\Box_b^{(1)}}, \sqrt{\Box_b^{(2)}})}\subset \{(x_1, x_2,y_1,y_2):
\rho^{(1)}(x_1, y_1) \leq \kappa_1 r_1 \mbox{ and } \rho^{(2)}(x_2, y_2) \leq \kappa_2 r_2\}.
\end{align*}
\\[2pt]
(ii)\enspace
If $\supp \MF_1{F} (\cdot, \lambda_2)\subset [-r_1, r_1]$ for all $\lambda_2 \in \R $, then
\begin{align*}
\supp K_{F(\sqrt{\Box_b^{(1)}}, \sqrt{\Box_b^{(2)}})}\subset \{(x_1, x_2,y_1,y_2):
\rho^{(1)}(x_1, y_1) \leq \kappa_1 r_1\}.
\end{align*}
\\[2pt]
(iii)\enspace
If $\supp \MF_2{F} (\lambda_1, \cdot)\subset [-r_2, r_2]$ for all $\lambda_1 \in \R $, then
\begin{align*}
\supp K_{F(\sqrt{\Box_b^{(1)}}, \sqrt{\Box_b^{(2)}})}\subset \{(x_1, x_2,y_1,y_2):
\rho^{(2)}(x_2, y_2) \leq \kappa_2 r_2\}.
\end{align*}
\end{proposition}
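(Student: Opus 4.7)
The plan is to reduce all three cases to Melrose's single-variable finite-propagation result (Lemma \ref{lem}) via Fourier inversion. The underlying fact is that $\sqrt{\Box_b^{(1)}}$ and $\sqrt{\Box_b^{(2)}}$ commute as self-adjoint operators on $\widetilde M$, since they act on independent factors; consequently the joint spectral calculus factorises as a tensor product of the two single-variable spectral calculi.

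For part (i), I would use the evenness of $F$ to obtain the cosine-cosine representation
\[
F(\sqrt{\Box_b^{(1)}},\sqrt{\Box_b^{(2)}}) = \frac{1}{(2\pi)^2}\iint \widehat F(t_1,t_2) \cos(t_1\sqrt{\Box_b^{(1)}}) \cos(t_2\sqrt{\Box_b^{(2)}}) \, dt_1 \, dt_2.
\]
Since the two cosine operators act on different coordinates of $\widetilde M$, the Schwartz kernel of their product is the tensor product of the two single-variable cosine kernels, and Lemma \ref{lem} forces each factor to vanish unless $\rho^{(k)}(x_k,y_k) \leq \kappa_k |t_k|$. The hypothesis $\supp \widehat F \subset [-r_1,r_1] \times [-r_2,r_2]$ restricts the integration to $|t_k| \leq r_k$, yielding the claimed bi-parameter support. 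To make this rigorous when $\widehat F$ is only a tempered distribution, I would test $F(\sqrt{\Box_b^{(1)}},\sqrt{\Box_b^{(2)}})$ against tensor-product functions $\phi_1 \otimes \phi_2$ and $\psi_1 \otimes \psi_2$ whose supports are separated by more than $\kappa_k r_k$ in at least one index $k$, and note that $\langle \cos(t_k\sqrt{\Box_b^{(k)}})\phi_k,\psi_k\rangle$ then vanishes throughout the effective integration region.

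For parts (ii) and (iii), a partial Fourier inversion in only the relevant variable suffices. For case (ii),
\[
F(\sqrt{\Box_b^{(1)}},\sqrt{\Box_b^{(2)}}) = \frac{1}{2\pi}\int \MF_1 F(t_1,\sqrt{\Box_b^{(2)}}) \cos(t_1\sqrt{\Box_b^{(1)}}) \, dt_1,
\]
where $\MF_1 F(t_1,\sqrt{\Box_b^{(2)}})$ is defined by the functional calculus of $\sqrt{\Box_b^{(2)}}$ applied to $\lambda_2 \mapsto \MF_1 F(t_1,\lambda_2)$. The hypothesis $\supp \MF_1 F(\cdot,\lambda_2) \subset [-r_1,r_1]$ restricts $t_1$ to $[-r_1,r_1]$, and Lemma \ref{lem} on the cosine factor supplies the first-variable support bound with no second-variable constraint. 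Case (iii) is symmetric with the roles of the two indices swapped.

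The main technical point to address is the passage from the Fourier-exponential representation to the cosine representation, which relies on a symmetry of $\widehat F$. Separate evenness of $F$ in each variable makes this immediate; under the weaker joint evenness stated in the proposition one formally picks up an extra $\sin(t_1\sqrt{\Box_b^{(1)}})\sin(t_2\sqrt{\Box_b^{(2)}})$ cross-term. Since $F(\sqrt{\Box_b^{(1)}},\sqrt{\Box_b^{(2)}})$ depends only on $F|_{[0,\infty)^2}$ (because $\sqrt{\Box_b^{(k)}} \geq 0$), one may replace $F$ by its separately-even extension before applying Fourier inversion; this is consistent with the intended applications, where $F(\lambda_1,\lambda_2) = m(\lambda_1^2,\lambda_2^2)$ is automatically separately even, and at that point the cosine-cosine representation applies directly and the argument sketched above yields the three claimed support properties.
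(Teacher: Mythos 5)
Your proposal is correct and follows essentially the same route as the paper: the paper proves only part (ii), by partial Fourier inversion in the first variable to reduce to $\cos(\xi_1\sqrt{\Box_b^{(1)}})$ and then invoking Melrose's finite propagation speed (Lemma \ref{lem}), exactly as you do, and declares (i) and (iii) similar. Your observation that the stated joint evenness does not by itself justify the cosine representation, and that one should pass to the separately even extension (harmless since only $F|_{[0,\infty)^2}$ enters the functional calculus, and automatic for $F(\lambda_1,\lambda_2)=m(\lambda_1^2,\lambda_2^2)$), is a point the paper glosses over and is handled correctly in your write-up.
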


\begin{proof}
We prove only (ii) since other assertions can be proved similarly.
Since $F$ is even, by the Fourier inversion formula,
\begin{align*}
F(\lambda_1, \lambda_2)
 =\int_{\R } \MF_1{F} (\xi_1, \lambda_2)e^{i \lambda_1 \xi_1} d\xi_1
 =\int_{-r_1}^{r_1} \MF_1{F} (\xi_1, \lambda_2)\cos(\lambda_1 \xi_1) d\xi_1.
\end{align*}
Hence
\begin{align*}
F(\Box_b^{(1)}, \Box_b^{(2)})
 =\int_{-r_1}^{r_1} \MF_1{F} (\xi_1, \Box_b^{(2)} )\cos(\xi_1 \Box_b^{(1)} )d\xi_1.
\end{align*}
It follows that
\begin{align*}
K_{F(\Box_b^{(1)}, \Box_b^{(2)})}(x_1, x_2, y_1, y_2)
 =\int_{-r_1}^{r_1} K_{\MF_1{F} (\xi_1, \Box_b^{(2)})}(x_2, y_2)K_{\cos(\xi_1\Box_b^1)}(x_1,y_1) d\xi_1.
\end{align*}
By Lemma \ref{lem},
\begin{align*}
\supp K_{\cos(\xi_1\Box_b^{(1)})} \subset \{(x_1, y_1): \rho^{(1)}(x_1, y_1) \leq \kappa_1 r_1 \}
 \end{align*}
for all $\xi_1 \in [-r_1, r_1]$, which yields the assertion (ii).
\end{proof}

We will need the following result. See \cite[Proposition 4.6]{Str1}.
\begin{lemma} \label{bes}
Let $k \in \{1,2\}$. For every multi-index $\alpha \in \mathcal{I}$ and every $\sigma > Q_k + 2 |\alpha|$,
there exists a constant $C_{k, \alpha, \sigma}$ such that for all $t >0$,
\begin{align} \label{087}
\big\|(X_{\alpha}^{(k)})_x K_{(I+t\hatbox_b^{(k)})^{-\sigma/4}}(x,\cdot) \big\|_{L^{2}(M^{(k)})} \leq C_{k, \alpha, m} \frac{\sqrt{t}^{-|\alpha|}}{V^{(k)}(x,\sqrt{t})^{1/2}}.
\end{align}
\end{lemma}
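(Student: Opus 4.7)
Plan: The approach is subordination plus a heat-kernel column estimate, with the dimensional threshold $\sigma > Q_k + 2|\alpha|$ emerging from the convergence of the resulting Mellin integral at the origin.

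First, I would write the Mellin representation
\[
(I + t\hatbox_b^{(k)})^{-\sigma/4} = \frac{1}{\Gamma(\sigma/4)} \int_0^\infty s^{\sigma/4 - 1} e^{-s}\, e^{-st\hatbox_b^{(k)}}\,ds,
\]
which holds by the spectral theorem and the identity $(1+t\lambda)^{-\sigma/4} = \Gamma(\sigma/4)^{-1}\int_0^\infty s^{\sigma/4-1}e^{-s(1+t\lambda)}\,ds$. Differentiating by $(X_\alpha^{(k)})_x$, taking the $L^2(M^{(k)})$ norm in $y$, and applying Minkowski's inequality reduces matters to proving the heat-kernel column bound
\[
\bigl\|(X_\alpha^{(k)})_x K_{e^{-r\hatbox_b^{(k)}}}(x, \cdot)\bigr\|_{L^2(M^{(k)})}
\lesssim r^{-|\alpha|/2}\, V^{(k)}(x, \sqrt r)^{-1/2}
\]
for each $r > 0$.

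To establish this column bound, I would use a derivative and diagonal refinement of \eqref{e7.2}. From the methods of Street \cite{Str1}, one has the pointwise estimate
\[
\bigl|(X_\alpha^{(k)})_x K_{e^{-r\Box_b^{(k)}}}(x, y)\bigr| \lesssim \frac{r^{-|\alpha|/2}}{V^{(k)}\bigl(x, \sqrt r + \rho^{(k)}(x, y)\bigr)} \exp\!\left(-\frac{c\,\rho^{(k)}(x, y)^2}{r}\right),
\]
which removes the diagonal singularity of \eqref{e7.2} and accommodates derivatives. Squaring, integrating, partitioning $M^{(k)}$ into the dyadic annuli $\{2^j\sqrt r \le \rho^{(k)}(x, y) < 2^{j+1}\sqrt r\}$ (with $j \ge 0$, plus a central ball), and applying \eqref{ccc} bounds the $j$-th annular contribution by $r^{-|\alpha|} V^{(k)}(x, \sqrt r)^{-1} 2^{jQ_k} e^{-c 4^j}$; summing in $j$ gives the desired column estimate. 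The passage from $\Box_b^{(k)}$ to $\hatbox_b^{(k)}$ costs only the $r$-independent Szeg\H{o}-projection term, which is absorbed cleanly against the constant Mellin mass $\int_0^\infty s^{\sigma/4-1}e^{-s}\,ds = \Gamma(\sigma/4)$ when folded back into the subordination integral.

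Substituting $r = st$ into the column estimate and plugging back yields
\[
\bigl\|(X_\alpha^{(k)})_x K_{(I+t\hatbox_b^{(k)})^{-\sigma/4}}(x, \cdot)\bigr\|_{L^2}
\lesssim t^{-|\alpha|/2} \int_0^\infty s^{\sigma/4 - 1 - |\alpha|/2} e^{-s}\, V^{(k)}(x, \sqrt{st})^{-1/2}\,ds.
\]
To extract $V^{(k)}(x, \sqrt t)^{-1/2}$, I would compare $V^{(k)}(x, \sqrt{st})$ with $V^{(k)}(x, \sqrt t)$: the reverse doubling \eqref{reverse doubling} gives $V^{(k)}(x, \sqrt{st})^{-1/2} \lesssim s^{-1} V^{(k)}(x, \sqrt t)^{-1/2}$ when $s \ge 1$, while the doubling \eqref{ccc} gives $V^{(k)}(x, \sqrt{st})^{-1/2} \lesssim s^{-Q_k/4} V^{(k)}(x, \sqrt t)^{-1/2}$ when $0 < s \le 1$. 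The resulting $s$-integral splits as $\int_0^1 s^{\sigma/4 - 1 - |\alpha|/2 - Q_k/4}\,ds$, finite precisely when $\sigma > Q_k + 2|\alpha|$ (exactly our hypothesis), and $\int_1^\infty s^{\sigma/4 - 2 - |\alpha|/2} e^{-s}\,ds$, finite for all parameters.

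The main technical obstacle is the refined pointwise heat-kernel derivative bound used above: \eqref{e7.2} as stated applies only to the undifferentiated kernel and blows up on the diagonal. Upgrading it to a derivative bound with $V^{(k)}(x, \sqrt r + \rho^{(k)}(x, y))^{-1}$ in the denominator requires the nonisotropic parabolic regularity machinery of Street \cite{Str1}; once that is available, the rest of the proof is the Mellin/doubling bookkeeping outlined above.
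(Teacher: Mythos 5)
The paper does not actually prove this lemma --- it is quoted verbatim from \cite[Proposition 4.6]{Str1} --- so your proposal can only be judged on its own merits. The subordination architecture (Mellin representation of $(1+t\lambda)^{-\sigma/4}$, reduction to a heat-kernel column bound, then the doubling/reverse-doubling split of the $s$-integral at $s=1$, which produces the threshold $\sigma>Q_k+2|\alpha|$ exactly) is sound, and that bookkeeping is correct as written.

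There is, however, a genuine error in the key input. The pointwise bound you invoke,
\begin{equation*}
\bigl|(X_\alpha^{(k)})_x K_{e^{-r\Box_b^{(k)}}}(x,y)\bigr|\lesssim \frac{r^{-|\alpha|/2}}{V^{(k)}(x,\sqrt r+\rho^{(k)}(x,y))}\exp\Bigl(-\frac{c\,\rho^{(k)}(x,y)^2}{r}\Bigr),
\end{equation*}
is false for $\Box_b^{(k)}$: already for $\alpha=0$ its right-hand side tends to $0$ as $r\to\infty$, whereas $e^{-r\Box_b^{(k)}}\to\pi^{(k)}\neq 0$ and $K_{e^{-r\Box_b^{(k)}}}(x,y)\to K_{\pi^{(k)}}(x,y)$. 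This is precisely the non-Gaussian phenomenon the paper emphasizes around \eqref{e7.2}, where the denominator is $V(x,\rho(x,y))$, uniformly in $r$. The refined, $r$-decaying bound holds only after the Szeg\H{o} projection is removed, i.e.\ for the kernel of $e^{-r\Box_b^{(k)}}-\pi^{(k)}=e^{-r\hatbox_b^{(k)}}$ (this is what Street/Nagel--Stein actually provide). Your proposed repair --- that the passage from $\Box_b^{(k)}$ to $\hatbox_b^{(k)}$ ``costs only the $r$-independent Szeg\H{o} term, absorbed against the Mellin mass'' --- cannot work: since $|K_{\pi^{(k)}}(x,y)|\sim V^{(k)}(x,\rho^{(k)}(x,y))^{-1}$ and $V^{(k)}(x,\delta)\sim\delta^4$ for small $\delta$ by \eqref{measure}, the column $\|(X_\alpha^{(k)})_xK_{\pi^{(k)}}(x,\cdot)\|_{L^2}$ is infinite, so there is nothing finite to absorb. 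The correct route is to write $(I+t\hatbox_b^{(k)})^{-\sigma/4}=\Gamma(\sigma/4)^{-1}\int_0^\infty s^{\sigma/4-1}e^{-s}\bigl(e^{-st\Box_b^{(k)}}-\pi^{(k)}\bigr)\,ds$ from the outset, apply the refined derivative bound to the kernel of $e^{-st\Box_b^{(k)}}-\pi^{(k)}$, and then run your annular and Mellin estimates unchanged; no leftover projection term ever appears. With that correction your argument closes, but as written the central pointwise estimate is not true and the projection is mishandled.
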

\begin{remark} \label{rmk}
Inequality \eqref{087} can be rewritten as
\begin{align*}
\big\|(X_{\alpha}^{(k)})_x K_{(I+t\sqrt{\hatbox_b^{(k)}})^{-\sigma/2}}(x,\cdot)
\big\|_{L^{2}(M^{(k)})} \leq C_{k, \alpha, m} \frac{t^{-|\alpha|}}{V^{(k)}(x,t)^{1/2}}.
\end{align*}
Indeed, since
\begin{align*}
\left(I+t\sqrt{\hatbox_b^{(k)}}\right)^{-\sigma/2} =\phi(\Box_b^{(k)})\left(I+t^{2} \hatbox_b^{(k)}\right)^{-\sigma/4},
\end{align*}
where $\phi(\lambda)=(1+t\sqrt{\lambda})^{-\sigma/2} (1 + t^2\lambda)^{\sigma/4}$,
\begin{align*}
&\big\|(X_{\alpha}^{(k)})_x K_{\bigl(I+t\sqrt{\hatbox_b^{(k)}}\bigr)^{-\sigma/2}}(x,\cdot) \big\|_{L^{2}(M^{(k)})}\\
&= \big\|(X_{\alpha}^{(k)})_x K_{\bigl(I+t\sqrt{\hatbox_b^{(k)}}\bigr)^{-\sigma/2}}(\cdot,x) \big\|_{L^{2}(M^{(k)})}\\
& = \big\|(X_{\alpha}^{(k)})_x K_{\phi(\hatbox_b^{(k)})\left(I+t^{2} \hatbox_b^{(k)}\right)^{-\sigma/4}}(\cdot,x) \big\|_{L^{2}(M^{(k)})}\\
&= \left\|(X_{\alpha}^{(k)})_x
\int K_{\phi(\Box_b^{(k)}) }(\cdot,z) K_{\left(I+t^{2} \hatbox_b^{(k)}\right)^{-\sigma/4}}(z,x) \,dz\right\|_{L^{2}(M^{(k)})}\\
&=\left\|\int K_{\phi(\Box_b^{(k)}) }(\cdot, z)\left[ (X_{\alpha}^{(k)})_xK_{\left(I+t^{2} \hatbox_b^{(k)}\right)^{-\sigma/4}}(z,x) \right] \,dz\right\|_{L^{2}(M^{(k)})}\\
&=\left\| \phi(\Box_b^{(k)})\left[(X_{\alpha}^{(k)})_x K_{\left(I+t^{2} \hatbox_b^{(k)}\right)^{-\sigma/4}}(\cdot,x) \right] \right\|_{L^{2}(M^{(k)})}\\
&\lesssim \left\| (X_{\alpha}^{(k)})_x \left[K_{\left(I+t^{2} \hatbox_b^{(k)}\right)^{-\sigma/4}}(\cdot,x) \right] \right\|_{L^{2}(M^{(k)})}\\
&= \left\| (X_{\alpha}^{(k)})_x \left[K_{\left(I+t^{2} \hatbox_b^{(k)}\right)^{-\sigma/4}}(x,\cdot)
 \right] \right\|_{L^{2}(M^{(k)})}\\
& \leq C_{k, \alpha, m} \frac{t^{-|\alpha|}}{V^{(k)}(x,t)^{1/2}},
\end{align*}
where we used the fact that $K_{\phi(\Box_b^{(k)})} (x,y) = K_{\phi(\Box_b^{(k)})} (y,x)\overline{\phantom{m}}$.
\end{remark}

Our next two propositions establish pointwise kernel estimates for the multipliers, when the multiplier function is smooth enough and has compact support.
The first proposition is for one group of variables and the second  is for
both groups of variables.

\begin{proposition} \label{ker1}
Suppose that $m:[0,\infty) \times [0,\infty) \to \C $ is supported in
$[1/4, 4] \times [0, \infty)$ and
$\|m\|_{W^{\sigma_1,2}L^\infty(\R \times \R )}<\infty$ for some $\sigma_1 \in \R $.
Then, for any $\varepsilon>0$, any $\alpha, \beta\in \mathcal{I}$,
any $N_1 \in \N$ such that $N_1 > (|\alpha| \vee |\beta|) + {Q_1}/{2}$,
there exists a constant $C_{\varepsilon, \alpha, \beta, N_1}$
such that
\begin{align*}
&\left|\iint (X_{\alpha}^{(1)})_{x_1} (X_{\beta}^{(1)})_{y_1} K_{m(r_1^2 \Box_b^{(1)}, \Box_b^{(2)})}(x_1,y_1,x_2, y_2)f_2(x_2)g_2(y_2)
\,d\mu^{2}(x_2) \,d\mu^{2}(y_2)\right|\\
&\qquad\leq C_{\varepsilon,\alpha, \beta, \sigma_1,N_1} \|m\|_{W^{\sigma_1,2}L^\infty(\R \times \R )}\|f_2\|_2\|g_2\|_2 \\
&\qquad\qquad \times \left(1+\frac{\rho^{(1)}(x_1,y_1)}{r_1} \right)^{N_1 +\varepsilon + {1/2} -\sigma_1}\frac{(r_1 \vee \rho^{(1)}(x_1, y_1))^{-|\alpha| -|\beta|}}{V^{(1)}(x_1, \rho^{(1)}(x_1,y_1) +r_1)}
\end{align*}
for all $x_1, y_1 \in M^{(1)}$.
\end{proposition}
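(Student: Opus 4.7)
The plan is to combine a Fourier decomposition in the first spectral variable with finite speed of propagation, converting Lemma \ref{elem1}'s pointwise symbol estimate into a pointwise kernel bound via a resolvent factorization of $\hatbox_b^{(1)}$. Write $\rho := \rho^{(1)}(x_1, y_1)$ and $G(\lambda_1, \lambda_2) := m(r_1^2 \lambda_1^2, \lambda_2^2)$, so $G$ is even in each variable and $G(\sqrt{\Box_b^{(1)}}, \sqrt{\Box_b^{(2)}}) = m(r_1^2 \Box_b^{(1)}, \Box_b^{(2)})$.

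Choose $S \sim r_1 + \rho$ with the implicit constant small enough that $\kappa_1 S / 2 < \rho$ when $\rho > r_1$. Decompose $G = G_\phi + G_{1-\phi}$ via $\MF_1 G_\phi(\xi_1, \lambda_2) = \phi(\xi_1/S)\,\MF_1 G(\xi_1, \lambda_2)$. Proposition \ref{fsp}(ii) then shows that the kernel of $G_{1-\phi}(\sqrt{\Box_b^{(1)}}, \sqrt{\Box_b^{(2)}})$ vanishes at $(x_1, y_1)$ when $\rho > r_1$; when $\rho \leq r_1$, $G$ is already supported in $\lambda_1 \sim 1/r_1$, so $1 + S\lambda_1 \sim 1$ and the analysis below applies to $G$ itself with the decay factor equal to one. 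In either case, Lemma \ref{elem1} yields
\begin{align*}
|G_\phi(\lambda_1, \lambda_2)| \leq A\,(1 + S\lambda_1)^{-N_1}, \qquad A := C\|m\|_{W^{\sigma_1, 2} L^\infty}\bigl(1 + S/r_1\bigr)^{(N_1 + \varepsilon + 1/2) - \sigma_1}.
\end{align*}

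Pick $\mu$ in the window $Q_1/2 + (|\alpha|\vee|\beta|) < \mu \leq N_1/2$, and set $R := (I + S\sqrt{\hatbox_b^{(1)}})^{-\mu}$ and $H(\lambda_1, \lambda_2) := G_\phi(\lambda_1, \lambda_2)(1 + S\lambda_1)^{2\mu}$. Then $\|H\|_\infty \leq CA$ and $G_\phi(\sqrt{\Box_b^{(1)}}, \sqrt{\Box_b^{(2)}}) = R \cdot H(\sqrt{\Box_b^{(1)}}, \sqrt{\Box_b^{(2)}}) \cdot R$, with $R$ acting only in the first variable. Introduce the auxiliary maps $\pi_{f_2}: L^2(\widetilde{M}) \to L^2(M^{(1)})$, $\pi_{f_2}(h)(x_1) := \int h(x_1, x_2) f_2(x_2) \, d\mu^{(2)}(x_2)$, and $\iota_{g_2}: L^2(M^{(1)}) \to L^2(\widetilde{M})$, $\iota_{g_2}(h)(y_1, y_2) := h(y_1) g_2(y_2)$; both commute with $R$, so $\pi_{f_2} G_\phi(\sqrt\Box, \sqrt\Box) \iota_{g_2} = R B R$ on $L^2(M^{(1)})$, where $B := \pi_{f_2} H(\sqrt\Box, \sqrt\Box) \iota_{g_2}$ satisfies $\|B\|_{\mathrm{op}} \leq CA\,\|f_2\|_2 \|g_2\|_2$. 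The integral under study is exactly $(X_\alpha^{(1)})_{x_1}(X_\beta^{(1)})_{y_1} K_{RBR}(x_1, y_1)$; expanding $RBR$ as a double kernel integral and applying Cauchy--Schwarz in the intermediate variables gives
\begin{align*}
\bigl|(X_\alpha^{(1)})_{x_1}(X_\beta^{(1)})_{y_1} K_{RBR}(x_1, y_1)\bigr| \leq \|B\|_{\mathrm{op}} \bigl\|(X_\alpha^{(1)})_{x_1} K_R(x_1, \cdot)\bigr\|_{L^2} \bigl\|(X_\beta^{(1)})_{y_1} K_R(\cdot, y_1)\bigr\|_{L^2}.
\end{align*}
The Remark following Lemma \ref{bes} bounds the two $L^2$ norms by $CS^{-|\alpha|}/V^{(1)}(x_1, S)^{1/2}$ and $CS^{-|\beta|}/V^{(1)}(y_1, S)^{1/2}$, respectively. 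Using doubling and $\rho \lesssim S$ to identify $V^{(1)}(x_1, S) \sim V^{(1)}(y_1, S) \sim V^{(1)}(x_1, \rho + r_1)$, together with $S^{-|\alpha|-|\beta|} \sim (r_1 \vee \rho)^{-|\alpha|-|\beta|}$ and $1 + S/r_1 \sim 1 + \rho/r_1$, yields the claimed bound.

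The hardest point is the balance in the resolvent factorization: Lemma \ref{bes} demands $\mu > Q_1/2 + (|\alpha|\vee|\beta|)$ to control the $L^2$ norms of $(X^{(1)}_\cdot) K_R$, while $L^\infty$-boundedness of $H$ demands $2\mu \leq N_1$, so the dimensional hypothesis must be exploited with some care (a slightly sharper pointwise bound on $G_\phi$, or a dyadic refinement of the resolvent scale, may be needed to hit the exact exponent in the stated form). The key structural observation enabling the whole argument is that $R$, acting only on $L^2(M^{(1)})$, commutes with the auxiliary maps $\pi_{f_2}$ and $\iota_{g_2}$, reducing the bi-parameter pointwise kernel estimate to a single Cauchy--Schwarz pairing of two resolvent kernels mediated by the bounded operator $B$ on $L^2(M^{(1)})$.
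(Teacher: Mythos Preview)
Your overall strategy---finite-speed cutoff in the first spectral variable via Lemma~\ref{elem1}, then a resolvent sandwich $R\cdot H\cdot R$ reducing to Lemma~\ref{bes}---mirrors the paper's approach closely; your packaging through $\pi_{f_2}$ and $\iota_{g_2}$ is in fact cleaner than the paper's duality argument with auxiliary suprema.  There is, however, a genuine gap.

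When $\rho>r_1$ the Fourier truncation destroys the vanishing of the symbol at $\lambda_1=0$: although the original $G$ satisfies $G(0,\lambda_2)=m(0,\lambda_2^2)=0$, the cutoff $G_\phi(0,\lambda_2)=\int\phi(\xi_1/S)\,\MF_1G(\xi_1,\lambda_2)\,e^{0}\,d\xi_1$ need not vanish.  Your resolvent $R=(I+S\sqrt{\hatbox_b^{(1)}})^{-\mu}$ is, by the definition of $\hatbox_b$, annihilated on the range of the Szeg\H{o} projector $\pi^{(1)}$; consequently $R\,H(\sqrt{\Box_b^{(1)}},\sqrt{\Box_b^{(2)}})\,R$ equals only $G_\phi(\sqrt{\hatbox_b^{(1)}},\sqrt{\Box_b^{(2)}})$, not $G_\phi(\sqrt{\Box_b^{(1)}},\sqrt{\Box_b^{(2)}})$.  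The missing piece
\[
\pi^{(1)}\otimes G_\phi\bigl(0,\sqrt{\Box_b^{(2)}}\bigr)
\]
must be estimated separately.  The paper does exactly this (its term $I_2$ in Case~1): one uses the NIS bound $\bigl|(X_\alpha^{(1)})_{x_1}(X_\beta^{(1)})_{y_1}K_{\pi^{(1)}}(x_1,y_1)\bigr|\lesssim\rho^{-|\alpha|-|\beta|}/V^{(1)}(x_1,\rho)$ together with the pointwise bound on $G_\phi(0,\cdot)$ from Lemma~\ref{elem1} to recover the stated decay.  Replacing $\hatbox_b^{(1)}$ by $\Box_b^{(1)}$ in $R$ does not avoid the issue, since Lemma~\ref{bes} is available only for $\hatbox_b$ (the full resolvent contains $K_{\pi^{(1)}}$, which is diagonal-singular and not in $L^2$ in one variable).

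On your ``hardest point'': the window $Q_1/2+(|\alpha|\vee|\beta|)<\mu\le N_1/2$ is nonempty only when $N_1>Q_1+2(|\alpha|\vee|\beta|)$, which is strictly stronger than the stated hypothesis $N_1>Q_1/2+(|\alpha|\vee|\beta|)$.  You are right to flag this, but note that the paper's own proof uses exactly the same balance (see the phrase ``here we used the assumptions that $N_1>Q_1+2(|\alpha|\vee|\beta|)$'' in the proof of Proposition~\ref{ker}); this looks like a misprint in the statements rather than a defect of your method.
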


\begin{proof}
Fix arbitrary $x_1, y_1 \in M^{(1)}$. We prove the assertion by considering four cases.

\subsubsection*{Case 1.} $\rho^{(1)}({x}_1, {y}_1) \geq r_1$.
Let $\psi(\lambda_1, \lambda_2) := m(\lambda_1^2, \lambda_2^2)$, $\psi_{r_1}(\lambda_1, \lambda_2):=\psi(r_1 \lambda_1,
\lambda_2)$ and define $F_{s_1,r_1}(\lambda_1, \lambda_2)$ by the formula
\begin{align*}
\MF{ F_{s_1, r_1}} (\xi_1, \xi_2) &=\phi\left(\frac{\xi_1}{s_1}\right)
 \frac{1}{r_1}\MF{\psi} \left(\frac{\xi_1}{r_1}, \xi_2\right)
  = \phi\left(\frac{\xi_1}{s_1}\right)
 \MF{\psi_{r_1}} (\xi_1, \xi_2),
\end{align*}
where $\phi$ is as in Lemma \ref{elem1}.
Let $J_{s_1,s_2,r_1} (\lambda_1, \lambda_2)$ be a complex-valued function such that
\begin{align*}
J_{s_1,r_1} (\lambda_1, \lambda_2)^2= F_{s_1,r_1} (\lambda_1, \lambda_2)
\end{align*}
(any choice of the square root will do).
Set $s_1= {\rho^{(1)}({x}_1,{y}_1)}/{(4\kappa_1)}$.
Since
\begin{align*}
\supp \bigl(\MF{F_{s_1,r_1}}(\cdot, \cdot) - \MF{\psi_{r_1}}(\cdot, \cdot)\bigr) \subset [0,2s_1]\times [0,\infty)
= \left[0,\frac{\rho^{(1)}({x}_1,{y}_1)}{2\kappa_1}\right] \times [0,\infty),
\end{align*}
it follows from the finite speed propagation (Proposition \ref{fsp}) that
\begin{align*}
&\supp  \left(K_{F_{s_1,s_2,r_1}(\sqrt{\Box_b^{(1)}}, \sqrt{\Box_b^{(2)}})}(\cdot, \cdot, \cdot, \cdot) - K_{\psi_{r_1}(\sqrt{\Box_b^{(1)}}, \sqrt{\Box_b^{(2)}})}(\cdot, \cdot,\cdot,\cdot)\right) \\
&\qquad \subset \left\{(u_1, v_1, x_2, y_2):  \rho^{(1)}(u_1,v_1) \leq \frac{\rho^{(1)}({x}_1, {y}_1)}{2}\right\}.
\end{align*}
Hence we can write
\begin{align*}
&\iint (X_{\alpha}^{(1)})_{x_1} (X_{\beta}^{(1)})_{y_1} K_{m(r_1^2 \Box_b^{(1)}, \Box_b^{(2)})}(x_1,y_1,x_2, y_2)f_2(x_2)g_2(y_2)\,d\mu^{2}(x_2)\,d\mu^{2}(y_2)\\
&\qquad=\iint (X_{\alpha}^{(1)})_{x_1} (X_{\beta}^{(1)})_{y_1} K_{\psi_{r_1}(\sqrt{\Box_b^{(1)}}, \sqrt{\Box_b^{(2)}})}(x_1,y_1,x_2, y_2)f_2(x_2)g_2(y_2)\,d\mu^{2}(x_2)\,d\mu^{2}(y_2)\\
&\qquad=\iint (X_{\alpha}^{(1)})_{x_1} (X_{\beta}^{(1)})_{y_1} K_{F_{s_1,r_1}(\sqrt{\Box_b^{(1)}}, \sqrt{\Box_b^{(2)}})}(x_1,y_1,x_2, y_2)f_2(x_2)g_2(y_2)\,d\mu^{2}(x_2)\,d\mu^{2}(y_2) \\
&\qquad=\iint (X_{\alpha}^{(1)})_{x_1} (X_{\beta}^{(1)})_{y_1} K_{F_{s_1,r_1}(\sqrt{\hatbox_b^{(1)}}, \sqrt{\Box_b^{(2)}})}(x_1,y_1,x_2, y_2)f_2(x_2)g_2(y_2)\,d\mu^{2}(x_2)\,d\mu^{2}(y_2)\\
&\qquad\qquad+ \iint (X_{\alpha}^{(1)})_{x_1} (X_{\beta}^{(1)})_{y_1} K_{\pi^{(1)} \otimes F_{s_1,r_1}(0, \sqrt{\Box_b^{(2)}})}(x_1,y_1,x_2, y_2)f_2(x_2)g_2(y_2)\,d\mu^{2}(x_2)\,d\mu^{2}(y_2)\\
&\qquad:= I_1 + I_2.
\end{align*}

Since $\pi^{(1)}$ is an NIS operator,
\begin{align*}
|(X_{\alpha}^{(1)})_{x_1}(X_{\beta}^{(1)})_{y_1} K_{\pi^{(1)}}(x_1,y_1)|
&\leq C_{\alpha, \beta} \frac{\rho^{(1)}(x_1,y_1)^{-|\alpha|-|\beta|}}{V^{(1)}(x_1, \rho^{(1)}(x_1,y_1))} \\
&\sim \frac{\rho^{(1)}(x_1,y_1)^{-|\alpha|-|\beta|}}{V^{(1)}(x_1, \rho^{(1)}(x_1,y_1)+r_1)} .
\end{align*}

Therefore, from Lemma \ref{elem1},
\begin{align*}
|I_2| &\lesssim  C_{\alpha, \beta} \frac{\rho^{(1)}({x}_1,{y}_1)^{-|\alpha|-|\beta|}}{V^{(1)}
({x}_1, \rho^{(1)}({x}_1,{y}_1)+r_1)}
\|F_{s_1,s_2,r_1}(0, \sqrt{\Box_b^{(2)}})\|_{2\to 2} \|f_2\|_2\|g_2\|_2\\
&\leq  C_{\varepsilon,\alpha, \beta, \sigma_1,N_1} \|m\|_{W^{\sigma_1,2}L^\infty(\R \times \R )}\|f_2\|_2\|g_2\|_2 \\
 &\quad \times \left(1+\frac{\rho^{(1)}({x}_1,{y}_1)}{r_1} \right)^{N_1 +\varepsilon + {1/2} -\sigma_1}
 \frac{(r_1 \vee \rho^{(1)}({x}_1, {y}_1))^{-|\alpha| -|\beta|}}{V^{(1)}({x}_1,
 \rho^{(1)}({x}_1,{y}_1) +r_1)}.
\end{align*}

Next we estimate term $I_1$. Using the fact that
\begin{align*}
F_{s_1,r_1}(\sqrt{\hatbox_b^{(1)}},\sqrt{\Box_b^{(2)}}) = J_{s_1,r_1}(\sqrt{\hatbox_b^{(1)}},\sqrt{\Box_b^{(2)}}) \circ J_{s_1,r_1}(\sqrt{\hatbox_b^{(1)}},\sqrt{\Box_b^{(2)}})
\end{align*}
 and the Cauchy--Schwarz inequality, we see that
\begin{align*}
|I_1|&=\left|\iint (X_{\alpha}^{(1)})_{x_1} (X_{\beta}^{(1)})_{y_1} K_{F_{s_1,r_1}(\sqrt{\hatbox_b^{(1)}}, \sqrt{\Box_b^{(2)}})}(x_1,y_1,x_2, y_2)f_2(x_2)g_2(y_2)\,d\mu^{2}(x_2)\,d\mu^{2}(y_2)\right|\\
& = \left|\iint (X_{\alpha}^{(1)})_{x_1} (X_{\beta}^{(1)})_{y_1} \iint  K_{J_{s_1,r_1}
(\sqrt{\hatbox_b^{(1)}}, \sqrt{{\Box}_b^{(2)}})}(x_1,z_1,x_2,z_2)\right.\\
 &\qquad
 \times \left. K_{J_{s_1, r_1}(\sqrt{\hatbox_b^{(1)}}, \sqrt{{\Box}_b^{(2)})}}(z_1,y_1,z_2,y_2)
\,d\mu^{1}(z_1) \,d\mu^{2}(z_2)f_2(x_2)g_2(y_2)\,d\mu^{2}(x_2)\,d\mu^{2}(y_2)\right|\\
& \leq \left\|\int (X_{\alpha}^{(1)})_{x_1}   K_{J_{s_1,r_1}
(\sqrt{\hatbox_b^{(1)}}, \sqrt{{\Box}_b^{(2)}})}(x_1, \cdot, x_2,\cdot)f_2(x_2)\,d\mu^{2}(x_2)
 \right\|_{L^{2}(M^{(1)} \times M^{(2)})} \\
 &\quad \times \left\|\int (X_{\beta}^{(1)})_{y_1}   K_{J_{s_1,r_1}
(\sqrt{\hatbox_b^{(1)}}, \sqrt{{\Box}_b^{(2)}})}(\cdot,y_1,\cdot,y_2)g_2(y_2)
\,d\mu^{2}(y_2) \right\|_{L^{2}(M^{(1)} \times M^{(2)})} \\
 &=:I_{11}\times I_{12}.
 \end{align*}
We only estimate term $I_{11}$, as $I_{12}$ is similar.
Set $V^{(1)}_{s_1}(x_1)=V^{(1)}({x}_1,s_1)^{1/2}$.
We write
\begin{align*}
&V^{(1)}({x}_1,s_1)^{1/2}I_{11}\\
&= \sup_{\|h\|_2=1} \biggl|V^{(1)}_{s_1}(x_1)\iiint (X_{\alpha}^{(1)})_{x_1}   K_{J_{s_1,r_1}
(\sqrt{\hatbox_b^{(1)}}, \sqrt{{\Box}_b^{(2)}})}(x_1,z_1,x_2,z_2)f_2(x_2)\,d\mu^2(x_2) \\
&\hspace{5cm} \times h(z_1,z_2)\,d\mu^{1}(z_1) \,d\mu^{2}(z_2)\biggr|\\
&\leq \sup_{\|h\|_2=1,\|g_1\|_1=1} \biggl|\iiiint V^{(1)}_{s_1}(u_1)(X_{\alpha}^{(1)})_{u_1}   K_{J_{s_1,r_1}
(\sqrt{\hatbox_b^{(1)}}, \sqrt{{\Box}_b^{(2)}})}(u_1,z_1,x_2,z_2)f_2(x_2) \,d\mu^{2}(x_2) \\
&\hspace{5cm}\times h(z_1,z_2)g_1(u_1)\,d\mu^{1}(u_1)  \,d\mu^{1}(z_1) \,d\mu^{2}(z_2)\biggr|\\
&= \sup_{\|h\|_2=1,\|g_1\|_1=1} \biggl|\iiiint    K_{J_{s_1,r_1}
(\sqrt{\hatbox_b^{(1)}}, \sqrt{{\Box}_b^{(2)}})}(u_1,z_1,x_2,z_2)f_2(x_2)\,d\mu(x_2) h(z_1,z_2)\,d\mu^{1}(z_1) \,d\mu^{2}(z_2) \\
&\hspace{5cm}\times(X_{\alpha}^{(1)})_{u_1}V^{(1)}_{s_1}(u_1)g_1(u_1)\,d\mu^{1}(u_1) \biggr|\\
&=\sup_{\|h\|_2=1,\|g_1\|_1=1} \biggl|\iint    J_{s_1,r_1}
(\sqrt{\hatbox_b^{(1)}}, \sqrt{{\Box}_b^{(2)}})h(u_1,x_2) f_2(x_2)(X_{\alpha}^{(1)})_{u_1}V^{(1)}_{s_1}(u_1) \\
&\hspace{5cm} \times g_1(u_1)\,d\mu^{1}(u_1) \,d\mu^{2}(x_2)\biggr|\\
&= \sup_{\|h\|_2=1,\|g_1\|_1=1} \biggl|\iint  G(\sqrt{\hatbox_b^{(1)}}, \sqrt{\Box_b^{(2)}}) \left(I + s_1 \sqrt{\hatbox_b^{(1)}}\right)^{-N_1 /2}  \otimes I h(u_1,x_2)  \\
&\hspace{5cm}\times f_2(x_2)(X_{\alpha}^{(1)})_{u_1}V^{(1)}_{s_1}(u_1)
g_1(u_1)\,d\mu^{1}(u_1) \,d\mu^{2}(x_2)\biggr|\\
&= \sup_{\|h\|_2=1,\|g_1\|_1=1} \biggl|\iint    G(\sqrt{\hatbox_b^{(1)}}, \sqrt{\Box_b^{(2)}})
h(u_1,x_2) \\
&\hspace{2cm}\times \left(I + s_1 \sqrt{\hatbox_b^{(1)}}\right)^{-N_1 /2}  \otimes I \left[f_2(x_2)(X_{\alpha}^{(1)})_{u_1}V^{(1)}_{s_1}(u_1)g_1(u_1)\right]\,d\mu^{1}(u_1) \,d\mu^{2}(x_2)\biggr|
\end{align*}
where
\begin{align*}
G(\lambda_1, \lambda_2) = (1 + s_1 \lambda_1)^{N_1 /2} J_{s_1, r_1}(\lambda_1, \lambda_2),
\end{align*}
and for the third step we used the estimate
 \begin{align*}
\zeta(x_1) \leq \|\zeta\|_{\infty} = \sup_{\|\theta\|_{1}=1} \int_{M^{(1)}} \zeta(u_1) \theta(u_1) \,d\mu^{1}(u_1)
\qquad\forall x_1 \in M^{(1)}.
 \end{align*}
By Lemma \ref{elem1},
\begin{align*}
|G(\lambda_1, \lambda_2)|&= |(1 + s_1 \lambda_1)^{N_1 /2}|| J_{s_1, r_1}(\lambda_1, \lambda_2)|\\
& \lesssim |(1 + s_1 \lambda_1)^{N_1 /2}| | F_{s_1, r_1}(\lambda_1, \lambda_2)|^{1/2}\\
& \lesssim \left( 1+ \frac{s_1}{r_1} \right)^{({N_1} + +{\varepsilon} + 1/2)/2-{\sigma_1/2}}\|m\|^{1/2}_{W^{\sigma_1,2}L^\infty(\R \times \R )}.
\end{align*}
From this and the Cauchy--Schwarz inequality, it follows that
\begin{align*}
&V^{(1)}({x}_1,s_1)^{1/2}I_{11}\\
&\leq C \sup_{\|h\|_2=1,\|g_1\|_1=1}\left\|  G(\sqrt{\hatbox_b^{(1)}}, \sqrt{\Box_b^{(2)}})h\right\|_2 \\
&\hspace{5cm}\left\|\left(I + s_1 \sqrt{\hatbox_b^{(1)}}\right)^{-N_1 /2}(X_{\alpha}^{(1)})_{u_1}V^{(1)}_{s_1}(u_1)g_1(u_1)\right\|_2 \|f_2\|_2\\
&\leq C \sup_{\|g_1\|_1=1}  \|G\|_\infty \left\|\left(I + s_1 \sqrt{\hatbox_b^{(1)}}\right)^{-N_1 /2}(X_{\alpha}^{(1)})_{u_1}V^{(1)}_{s_1}(u_1)g_1(u_1)\right\|_2 \|f_2\|_2\\
&\leq C \left( 1+ \frac{s_1}{r_1} \right)^{({N_1/2} + {1/4}+{\varepsilon/2})-{\sigma_1/2}}
 \|m\|^{1/2}_{W^{\sigma_1,2}L^\infty(\R \times \R )}\|f_2\|_2  \\
&\hspace{5cm} \times \sup_{\|g_1\|_1=1}  \left\|\left(I + s_1 \sqrt{\hatbox_b^{(1)}}\right)^{-N_1 /2}(X_{\alpha}^{(1)})_{u_1}V^{(1)}_{s_1}(u_1)g_1(u_1)\right\|_2.
\end{align*}
Hence, from Lemma \ref{bes} and Remark \ref{rmk},
\begin{align*}
&\left\|\left(I + s_1 \sqrt{\hatbox_b^{(1)}}\right)^{-N_1 /2} (X_{\alpha}^{(1)})_{u_1}V^{(1)}_{s_1}(u_1)g_1(u_1)\right\|_2\\
&\qquad\leq \sup_{\|f_1\|_2=1} \left|\int \left(I + s_1 \sqrt{\hatbox_b^{(1)}}\right)^{-N_1 /2}(X_{\alpha}^{(1)})_{u_1}V^{(1)}_{s_1}(u_1)g_1(u_1) f_1(u_1)\,d\mu^{1}(u_1)  \right|\\
&\qquad=  \sup_{\|f_1\|_2=1} \left|\int g_1(u_1) V^{(1)}_{s_1}(u_1)(X_{\alpha}^{(1)})_{u_1} \left(I + s_1 \sqrt{\hatbox_b^{(1)}}\right)^{-N_1 /2}f_1(u_1)\,d\mu^{1}(u_1)  \right|\\
&\qquad\leq \sup_{\|f_1\|_2=1} \|g_1\|_1\sup_{u_1} V^{(1)}_{s_1}(u_1)\left|(X_{\alpha}^{(1)})_{u_1}\int K_{\left(I + s_1 \sqrt{\hatbox_b^{(1)}}\right)^{-N_1 /2}}(u_1,y_1)f_1(y_1)\,d\mu^{1}(y_1) \right|\\
&\qquad\leq C \sup_{\|f_1\|_2=1}\|g_1 \|_1\sup_{u_1} V^{(1)}_{s_1}(u_1)\left\|(X_{\alpha}^{(1)})_{u_1} K_{\left(I + s_1 \sqrt{\hatbox_b^{(1)}}\right)^{-N_1 /2}}(u_1,\cdot)\,d\mu^{1}(u_1) \right\|_2\|f_1\|_2\\
&\qquad\leq C \|g_1\|_1 s_1^{-|\alpha|}.
\end{align*}
Combining the two estimates above, we deduce that
 \begin{align*}
 I_{11}\leq
 C \left( 1+ \frac{s_1}{r_1} \right)^{({N_1/2} + {1/4}+{\varepsilon/2})-{\sigma_1/2}}
 \|m\|^{1/2}_{W^{\sigma_1,2}L^\infty(\R \times \R )}\|f_2\|_2 s_1^{-|\alpha|}V^{(1)}({x}_1,s_1)^{-1/2}.
\end{align*}
Similarly,
\begin{align*}
I_{12}\leq
 C \left( 1+ \frac{s_1}{r_1} \right)^{({N_1/2} + {1/4}+{\varepsilon/2})-{\sigma_1/2}}\|m\|^{1/2}_{W^{\sigma_1,2}L^\infty(\R \times \R )}
 \|g_2\|_2 s_1^{-|\beta|}V^{(1)}({y}_1,s_1)^{-1/2}.
\end{align*}
Consequently,
\begin{align*}
|I_1|  & \lesssim C \|m\|_{W^{\sigma_1,2}L^\infty(\R \times \R )}\|f_2\|_2\|g_2\|_2
  \left(1+\frac{\rho^{(1)}(x_1,y_1)}{r_1} \right)^{N_1 +\varepsilon + {1/2} -\sigma_1}\frac{(r_1 \vee \rho^{(1)}(x_1, y_1))^{-|\alpha| -|\beta|}}{V^{(1)}(x_1, \rho^{(1)}(x_1,y_1) +r_1)}.
 \end{align*}

\subsubsection*{Case 2.} $\rho^{(1)}({x}_1, {y}_1) \leq r_1$.
Let $\psi(\lambda_1, \lambda_2) := m(\lambda_1^2, \lambda_2^2)$, $\psi_{r_1}(\lambda_1, \lambda_2):=\psi(r_1 \lambda_1, \lambda_2)$ and let $J_{r_1} (\lambda_1, \lambda_2)$ be any complex-valued function such that
\begin{align*}
J_{r_1} (\lambda_1, \lambda_2)^2=\psi(r_1 \lambda_1, \lambda_2)
\end{align*}
(any branch will do).
Since $J^2_{r_1} (0, \lambda_2)=\psi(0,
\lambda_2)=m(0,\lambda_2^2)=0$, it follows that
\begin{align*}
J_{r_1}(\sqrt{\Box_b^{(1)}}, \sqrt{\Box_b^{(2)}})=J_{r_1}(\sqrt{\hatbox_b^{(1)}}, \sqrt{\Box_b^{(2)}}).
\end{align*}
Hence we can write
\begin{align*}
&\iint (X_{\alpha}^{1})_{x_1} (X_{\beta}^{(1)})_{y_1} K_{m(r_1^2 \Box_b^{(1)}, \Box_b^{(2)})}(x_1,y_1,x_2, y_2)f_2(x_2)g_2(y_2)\,d\mu^{2}(x_2) \,d\mu^{2}(y_2) \\
&\qquad=\iint (X_{\alpha}^{(1)})_{x_1} (X_{\beta}^{(1)})_{y_1} K_{\psi_{r_1}(\sqrt{\Box_b^{(1)}}, \sqrt{\Box_b^{(2)}})}(x_1,y_1,x_2, y_2)f_2(x_2)g_2(y_2) \,d\mu^{2}(x_2) \,d\mu^{2}(y_2) \\
&\qquad=\iint (X_{\alpha}^{(1)})_{x_1} (X_{\beta}^{(1)})_{y_1} K_{J^2_{r_1}(\sqrt{\hatbox_b^{(1)}}, \sqrt{\Box_b^{(2)}})}(x_1,y_1,x_2, y_2)f_2(x_2)g_2(y_2)\,d\mu^{2}(x_2) \,d\mu^{2}(y_2)\\
&\qquad=:B,
\end{align*}
say.
From the Cauchy--Schwarz inequality, it follows that
\begin{align*}
B&=\left|\iint (X_{\alpha}^{(1)})_{x} (X_{\beta}^{(1)})_{y_1} K_{J^2_{r_1}(\sqrt{\hatbox_b^{(1)}}, \sqrt{\Box_b^{(2)}})}(x_1,y_1,x_2, y_2)f_2(x_2)g_2(y_2)\,d\mu^{2}(x_2) \,d\mu^{2}(y_2)\right|\\
& = \left|\iint (X_{\alpha}^{(1)})_{x} (X_{\beta}^{(1)})_{y_1} \int_{M^{(1)} \times M^{(2)}} K_{J_{r_1}
(\sqrt{\hatbox_b^{(1)}}, \sqrt{{\Box}_b^{(2)}})}(x_1,z_1,x_2,z_2)\right.\\
&\hspace{3cm}\times \left. K_{J_{r_1}(\sqrt{\hatbox_b^{(1)}}, \sqrt{{\Box}_b^{(2)})}}(z_1,y_1,z_2,y_2)
\,dz_1 \,dz_2f_2(x_2)g_2(y_2)\,d\mu^{2}(x_2) \,d\mu^{2}(y_2)\right|\\
& \leq \left\|\int (X_{\alpha}^{(1)})_{x_1}   K_{J_{r_1}
(\sqrt{\hatbox_b^{(1)}}, \sqrt{{\Box}_b^{(2)}})}(x_1,z_1,x_2,z_2)f_2(x_2)\,d\mu^{2}(x_2) \right\|_{L^{2}(M^{(1)} \times M^{(2)})} \\
 &\hspace{3cm} \times \left\|\int (X_{\beta}^{(1)})_{y_1}   K_{J_{r_1}
(\sqrt{\hatbox_b^{(1)}}, \sqrt{{\Box}_b^{(2)}})}(z_1,y_1,z_2,y_2)g_2(y_2)\,d\mu^{2}(y_2) \right\|_{L^{2}(M^{(1)} \times M^{(2)})} \\
 &=:B_{1}\times B_{2}.
 \end{align*}
Set $V^{(1)}_{r_1}(x_1)=V^{(1)}({x}_1,r_1)^{1/2}$.
We write
\begin{align*}
&V^{(1)}({x}_1,r_1)^{1/2}I_{11}\\
&\qquad= \sup_{\|h\|_2=1} \biggl|V^{(1)}_{r_1}(x_1)\iint (X_{\alpha}^{(1)})_{x_1}   K_{J_{r_1}
(\sqrt{\hatbox_b^{(1)}}, \sqrt{{\Box}_b^{(2)}})}(x_1,z_1,x_2,z_2) \\
&\qquad\hspace{7cm} f_2(x_2) \,dx_2 h(z_1,z_2) \,d\mu^{1}(z_1) \,d\mu^{2}(z_2)\biggr|\\
&\qquad\leq \sup_{\|h\|_2=1,\|g_1\|_1=1} \biggl|\iint V^{(1)}_{r_1}(x_1)(X_{\alpha}^{(1)})_{x_1}   K_{J_{r_1}
(\sqrt{\hatbox_b^{(1)}}, \sqrt{{\Box}_b^{(2)}})}(x_1,z_1,x_2,z_2)
f_2(x_2) \,d\mu^{2}(x_2)\\
&\qquad\hspace{7cm}  h(z_1,z_2) g_1(x_1)\,d\mu^{1}(x_1)  \,d\mu^{1}(z_1) \,d\mu^{2}(z_2)\biggr|\\
&\qquad= \sup_{\|h\|_2=1,\|g_1\|_1=1} \biggl|\iiint    K_{J_{r_1}
(\sqrt{\hatbox_b^{(1)}}, \sqrt{{\Box}_b^{(2)}})}(x_1,z_1,x_2,z_2)f_2(x_2) \,d\mu^{2}(x_2) h(z_1,z_2)\,d\mu^{1}(z_1) \\
&\qquad\hspace{7cm}  \,d\mu^{2}(z_2)(X_{\alpha}^{(1)})_{x_1}V^{(1)}_{r_1}(x_1)g_1(x_1)
\,d\mu^{1}(x_1) \biggr|\\
&\qquad= \sup_{\|h\|_2=1,\|g_1\|_1=1} \biggl|\iint    J_{r_1}
(\sqrt{\hatbox_b^{(1)}}, \sqrt{{\Box}_b^{(2)}})h(x_1,x_2) f_2(x_2)(X_{\alpha}^{(1)})_{x_1}V^{(1)}_{r_1}(x_1)\\
&\qquad\hspace{7cm} g_1(x_1)\,d\mu^{1}(x_1) \,d\mu^{2}(x_2)\biggr|\\
&\qquad= \sup_{\|h\|_2=1,\|g_1\|_1=1} \biggl|\iint G(\sqrt{\hatbox_b^{(1)}}, \sqrt{\Box_b^{(2)}}) \left(I + r_1 \sqrt{\hatbox_b^{(1)}}\right)^{-N_1 /2}  \otimes I h(x_1,x_2) f_2(x_2) \\
&\qquad\hspace{7cm}  (X_{\alpha}^{(1)})_{x_1}V^{(1)}_{r_1}(x_1)g_1(x_1)\,d\mu^{1}(x_1) \,d\mu^{2}(x_2)\biggr| \\
&\qquad= \sup_{\|h\|_2=1,\|g_1\|_1=1} \biggl|\iint    G(\sqrt{\hatbox_b^{(1)}}, \sqrt{\Box_b^{(2)}})
h(x_1,x_2) \\
&\qquad\hspace{4cm}  \times \left(I + r_1 \sqrt{\hatbox_b^{(1)}}\right)^{-N_1 /2}  \otimes I \left[f_2(x_2)(X_{\alpha}^{(1)})_{x_1}V^{(1)}_{r_1}(x_1)g_1(x_1)\right]\\
&\qquad\hspace{10cm} \times \,d\mu^{1}(x_1) \,d\mu^{2}(x_2)\biggr|,
\end{align*}
where
\begin{align*}
G(\lambda_1, \lambda_2) = (1 + r_1 \lambda_1)^{N_1 /2} J_{r_1}(\lambda_1, \lambda_2).
\end{align*}
Since the supports of $m$ and $\psi$ are compact,
\begin{align*}
|G(\lambda_1, \lambda_2)|\lesssim |(1 + r_1 \lambda_1)^{N_1 /2}| |\psi(r_1 \lambda_1,
\lambda_2)|^{1/2} \lesssim C\|m\|^{1/2}_\infty.
\end{align*}
From this and the Cauchy--Schwarz inequality, it follows that
\begin{align*}
&V^{(1)}({x}_1,r_1)^{1/2}B_{1}\\
&\qquad\leq C \sup_{\|h\|_2=1,\|g_1\|_1=1} \left\|  G(\sqrt{\hatbox_b^{(1)}}, \sqrt{\Box_b^{(2)}})h\right\|_2 \\
&\qquad\hspace{4cm}\left\|\left(I + r_1 \sqrt{\hatbox_b^{(1)}}\right)^{-N_1 /2}(X_{\alpha}^{(1)})_{x_1}V^{(1)}_{r_1}(x_1)g_1(x_1)\right\|_2 \|f_2\|_2\\
&\qquad\leq C \sup_{\|g_1\|_1=1}  \|G\|_\infty \left\|\left(I + r_1 \sqrt{\hatbox_b^{(1)}}\right)^{-N_1 /2}(X_{\alpha}^{(1)})_{x_1}V^{(1)}_{r_1}(x_1)g_1(x_1)\right\|_2 \|f_2\|_2\\
&\qquad\leq C \|m\|^{1/2}_\infty \|f_2\|_2 \sup_{\|g_1\|_1=1}  \left\|\left(I + r_1 \sqrt{\hatbox_b^{(1)}}\right)^{-N_1 /2}(X_{\alpha}^{(1)})_{x_1}V^{(1)}_{r_1}(x_1)g_1(x_1)\right\|_2.
\end{align*}
Together with Lemma \ref{bes} and Remark \ref{rmk}, this yields
\begin{align*}
&\left\|\left(I + r_1 \sqrt{\hatbox_b^{(1)}}\right)^{-N_1 /2}(X_{\alpha}^{(1)})_{x_1}V^{(1)}_{r_1}(x_1)g_1(x_1)\right\|_2\\
&\qquad\leq \sup_{\|f_1\|_2=1} \left|\int \left(I + r_1 \sqrt{\hatbox_b^{(1)}}\right)^{-N_1 /2}(X_{\alpha}^{(1)})_{x_1}V^{(1)}_{r_1}(x_1)g_1(x_1) f_1(x_1)\,d\mu^{1}(x_1)  \right|\\
&\qquad=  \sup_{\|f_1\|_2=1} \left|\int g_1(x_1) V^{(1)}_{r_1}(x_1)(X_{\alpha}^{(1)})_{x_1} \left(I + r_1 \sqrt{\hatbox_b^{(1)}}\right)^{-N_1 /2}f_1(x_1)\,d\mu^{1}(x_1) \right|\\
&\qquad\leq \sup_{\|f_1\|_2=1} \|g_1\|_1\sup_{x_1} V^{(1)}_{r_1}(x_1)\left|(X_{\alpha}^{(1)})_{x_1}\int K_{\left(I + r_1 \sqrt{\hatbox_b^{(1)}}\right)^{-N_1 /2}}(x_1,y_1)f_1(y_1)\,d\mu^{1}(y_1)\right|\\
&\qquad\leq C \sup_{\|f_1\|_2=1}\|g_1\|_1\sup_{x_1} V^{(1)}_{r_1}(x_1)\left\|X_{\alpha}^{(1)})_{x_1} K_{\left(I + r_1 \sqrt{\hatbox_b^{(1)}}\right)^{-N_1 /2}}(x_1,\cdot)\right\|_2\|f_1\|_2\\
&\qquad\leq C \|g_1\|_1 r_1^{-|\alpha|}.
\end{align*}
Combining the two estimates above shows that
 \begin{align*}
 B_{1}\leq
 C \|m\|^{1/2}_\infty \|f_2\|_2 r_1^{-|\alpha|}V^{(1)}({x}_1,r_1)^{-1/2}.
 \end{align*}
Similarly,
\begin{align*}
B_{2}
\leq C \|m\|^{1/2}_\infty \|g_2\|_2 r_1^{-|\beta|}V^{(1)}({y}_1,r_1)^{-1/2}.
\end{align*}
Therefore
\begin{align*}
|B|  & \lesssim C \|m\|_\infty\|f_2\|_2\|g_2\|_2\frac{(r_1 \vee \rho^{(1)}(x_1, y_1))^{-|\alpha| -|\beta|}}{V^{(1)}(x_1, \rho^{(1)}(x_1,y_1) +r_1)},
 \end{align*}
as required.
\end{proof}

\begin{proposition} \label{ker}
Suppose that $\sigma_1, \sigma_2 \in \R $ and $m:[0,\infty) \times [0,\infty) \to \C $ has support in $[1/4, 4] \times [1/4, 4]$ and
$\|m\|_{W^{(\sigma_1,\sigma_2),2}(\R \times \R )}<\infty$.
Then, for any $\varepsilon>0$, any $\alpha, \beta, \alpha', \beta' \in \mathcal{I}$,
and any $N_1, N_2 \in \N$ with $N_1 > \frac{Q_1}{2} + (|\alpha| \vee |\beta|)$ and
$N_2 > \frac{Q_2}{2} +  (|\alpha'| \vee |\beta'|)$,
there exists a constant $C = C(\varepsilon, \alpha, \beta, \alpha', \beta', \sigma_1, \sigma_2, N_1, N_2)$
such that
\begin{align*}
&\left|(X_{\alpha}^{(1)})_{x_1} (X_{\alpha'}^{(2)})_{x_2} (X_{\beta}^{(1)})_{y_1} (X_{\beta'}^{(2)})_{y_2} K_{m(r_1^2 \Box_b^{(1)}, r_2^2 \Box_b^{(2)})}(x_1, x_2,y_1, y_2)\right| \\
&\qquad\leq C \|m\|_{W^{(\sigma_1, \sigma_2),2}(\R \times \R )}
 \left(1+\frac{\rho^{(1)}(x_1,y_1)}{r_1} \right)^{N_1 +\varepsilon + {1/2} -\sigma_1}\frac{(r_1 \vee \rho^{(1)}(x_1, y_1))^{-|\alpha| -|\beta|}}{V^{(1)}(x_1, \rho^{(1)}(x_1,y_1) +r_1)}\\
&\hspace{5cm}  \
\times \left(1+\frac{\rho^{(2)}(x_2,y_2)}{r_2} \right)^{N_2 +\varepsilon + {1/2} -\sigma_2}\frac{(r_2 \vee \rho^{(2)}(x_2, y_2))^{-|\alpha'| -|\beta'|}}{V^{(2)}(x_2, \rho^{(2)}(x_2,y_2) +r_2)}.
\end{align*}
\end{proposition}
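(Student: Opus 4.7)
The plan is to adapt the single-parameter argument of Proposition~\ref{ker1} so that the Fourier cutoffs and the projection splitting are applied simultaneously in both factors. Write $\psi(\lambda_1,\lambda_2) = m(\lambda_1^2,\lambda_2^2)$, so the target operator is $\psi(r_1\sqrt{\Box_b^{(1)}}, r_2\sqrt{\Box_b^{(2)}})$. I split into four cases according to whether $\rho^{(i)}(x_i, y_i)$ is larger or smaller than $r_i$ for $i=1,2$.

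In the principal case $\rho^{(i)}(x_i, y_i) \geq r_i$ for both $i$, I set $s_i = \rho^{(i)}(x_i,y_i)/(4\kappa_i)$ and introduce the Fourier-truncated symbol $F=F_{s_1,s_2,r_1,r_2}$ of Lemma~\ref{elem}(iv). By the bi-parameter finite speed propagation (Proposition~\ref{fsp}(ii)--(iii)), the kernels of $F(\sqrt{\Box_b^{(1)}}, \sqrt{\Box_b^{(2)}})$ and of $\psi(r_1\sqrt{\Box_b^{(1)}}, r_2\sqrt{\Box_b^{(2)}})$ agree at the point $(x_1,x_2,y_1,y_2)$. Writing $I = (I-\pi^{(i)}) + \pi^{(i)}$ in each factor and using $F(\sqrt{\Box_b^{(i)}},\,\cdot\,)\pi^{(i)} = F(0,\,\cdot\,)\pi^{(i)}$ yields
\begin{align*}
F(\sqrt{\Box_b^{(1)}},\sqrt{\Box_b^{(2)}}) ={}& F(\sqrt{\hatbox_b^{(1)}},\sqrt{\hatbox_b^{(2)}}) + F(\sqrt{\hatbox_b^{(1)}},0)\otimes\pi^{(2)} \\
&{}+ \pi^{(1)}\otimes F(0,\sqrt{\hatbox_b^{(2)}}) + F(0,0)\,\pi^{(1)}\otimes\pi^{(2)}.
\end{align*}
The three terms containing a projection are bounded by combining the NIS estimate on $\pi^{(i)}$ (which supplies the desired $\rho^{(i)}(x_i,y_i)^{-|\alpha|-|\beta|}/V^{(i)}(x_i,\rho^{(i)}(x_i,y_i))$ factor) with either Proposition~\ref{ker1} in the remaining variable (for the two mixed terms) or a Sobolev embedding $W^{(\sigma_1,\sigma_2),2}\hookrightarrow L^\infty$ bound on $F(0,0)$ (for the pure projection term).

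The core term $F(\sqrt{\hatbox_b^{(1)}},\sqrt{\hatbox_b^{(2)}})$ is handled by the $J\cdot J$ factorization trick. Writing $J$ for any complex square root of $F$ and expanding $K_F = \int K_J\cdot K_J$, Cauchy--Schwarz reduces matters to estimating two copies of $\|(X_\alpha^{(1)})_{x_1}(X_{\alpha'}^{(2)})_{x_2} K_{J(\sqrt{\hatbox_b^{(1)}},\sqrt{\hatbox_b^{(2)}})}(x_1,\cdot,x_2,\cdot)\|_{L^2(M^{(1)}\times M^{(2)})}$. I introduce the auxiliary symbol
\[
G(\lambda_1,\lambda_2) = (1+s_1\lambda_1)^{N_1/2}(1+s_2\lambda_2)^{N_2/2} J(\lambda_1,\lambda_2),
\]
so that $\|G\|_\infty \lesssim \|m\|^{1/2}_{W^{(\sigma_1,\sigma_2),2}}\prod_i (1+s_i/r_i)^{(N_i+\varepsilon+1/2-\sigma_i)/2}$ by Lemma~\ref{elem}(iv). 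The adjoint-duality argument of Proposition~\ref{ker1} then transfers the derivatives in $x_i$ onto the resolvent kernels $K_{(I+s_i\sqrt{\hatbox_b^{(i)}})^{-N_i/2}}$, and Lemma~\ref{bes} together with Remark~\ref{rmk} gives $s_i^{-|\alpha|}/V^{(i)}(x_i,s_i)^{1/2}$ in each factor. Since $s_i\sim\rho^{(i)}(x_i,y_i)\sim \rho^{(i)}(x_i,y_i)+r_i$ in this case, the claimed bound follows.

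The three remaining cases (one or both $\rho^{(i)}(x_i,y_i) \leq r_i$) are simpler: in a ``small-distance'' variable the Fourier cutoff is unnecessary, $s_i$ is replaced by $r_i$, and the compact support of $m$ in that variable allows the fractional Sobolev norm to be replaced by $\|m\|_\infty$ as in Case~2 of Proposition~\ref{ker1}. I expect the main obstacle to be the bookkeeping in the principal case: producing the correct $G$ so that Lemma~\ref{elem}(iv) yields \emph{exactly} the two decay factors $(1+s_i/r_i)^{N_i+\varepsilon+1/2-\sigma_i}$ simultaneously, and then running the two-parameter adjoint trick without losing the product structure of the volume factors $V^{(1)}(x_1,\rho^{(1)}(x_1,y_1)+r_1)\,V^{(2)}(x_2,\rho^{(2)}(x_2,y_2)+r_2)$.
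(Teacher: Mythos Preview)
Your outline follows the paper's proof almost exactly: same four-case split on $\rho^{(i)}(x_i,y_i)\gtrless r_i$, same Fourier cutoff $F_{s_1,s_2,r_1,r_2}$ in the principal case, same $(I-\pi^{(i)})+\pi^{(i)}$ decomposition into four pieces, and the same $J\cdot J$ + Cauchy--Schwarz + Lemma~\ref{bes} treatment of the core term with the auxiliary symbol $G$.

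Two minor corrections to your citations. First, Proposition~\ref{ker1} does \emph{not} handle the mixed terms $\pi^{(1)}\otimes F(0,\sqrt{\hatbox_b^{(2)}})$ and $F(\sqrt{\hatbox_b^{(1)}},0)\otimes\pi^{(2)}$: that proposition gives an $L^2(M^{(2)})$-integrated estimate for a genuinely bi-parameter operator, whereas here $F(0,\cdot)$ is a one-variable symbol and you need a \emph{pointwise} one-parameter kernel bound on $M^{(2)}$. The paper obtains that bound directly by rerunning the $J$-factorization and Lemma~\ref{bes} argument in the single remaining variable, using the pointwise decay of $J(0,\lambda_2)$ supplied by Lemma~\ref{elem}(iv). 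Second, for the pure projection term you need more than the Sobolev embedding $W^{(\sigma_1,\sigma_2),2}\hookrightarrow L^\infty$: the required bound on $|F(0,0)|$ must carry the factors $(1+s_i/r_i)^{N_i+\varepsilon+1/2-\sigma_i}$, and this again comes from Lemma~\ref{elem}(iv) evaluated at $\lambda_1=\lambda_2=0$, not from a plain embedding. With these two adjustments your argument matches the paper's.
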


\begin{proof}
Fix $\bar{x}_1,\bar{x}_2 \in M^{(1)}$ and $\bar{y}_1, \bar{y}_2 \in M_2$.
We wish to bound
\begin{align*}
(X_{\alpha}^{(1)})_{x_1} (X_{\alpha'}^{(2)})_{x_2} (X_{\beta}^{(1)})_{y_1} (X_{\beta'}^{(2)})_{y_2}K_{m(r_1^2 \Box_b^{(1)}, r_2^2\Box_b^{(2)})}(x_1,x_2,y_1,y_2)\Bigm|_{(x_1,x_2,y_1, y_2)=(\bar{x}_1,
\bar{x}_2, \bar{y}_1, \bar{y}_2)}.
\end{align*}
We divide the argument into several cases:

\subsubsection*{Case 1.} $\rho^{(1)}(\bar{x}_1, \bar{y}_1) \geq r_1$ and $\rho^{(2)}(\bar{x}_2, \bar{y}_2) \geq r_2$.
Let $\psi(\lambda_1, \lambda_2) := m(\lambda_1^2, \lambda_2^2)$, $\psi_{r_1, r_2}(\lambda_1, \lambda_2):=\psi(r_1 \lambda_1,
r_2 \lambda_2)$ and let $F_{s_1,s_2,r_1,r_2}(\lambda_1, \lambda_2)$ be given via the relation
\begin{align*}
\MF{ F_{s_1, s_2, r_1,r_2}} (\xi_1, \xi_2) =\phi\left(\frac{\xi_1}{s_1}\right)
 \phi\left(\frac{\xi_2}{s_2}\right) \frac{1}{r_1 r_2}\MF{\psi} \left(\frac{\xi_1}{r_1}, \frac{\xi_2}{r_2}\right)
  = \phi\left(\frac{\xi_1}{s_1}\right)
 \phi\left(\frac{\xi_2}{s_2}\right) \MF{\psi_{r_1,r_2}} (\xi_1, \xi_2),
\end{align*}
where $\phi$ is as in Lemma \ref{elem}.
Let $J_{s_1,s_2,r_1,r_2} (\lambda_1, \lambda_2)$ be a complex-valued function such that
\begin{align*}
J_{s_1,s_2,r_1,r_2} (\lambda_1, \lambda_2)^2= F_{s_1,s_2,r_1,r_2} (\lambda_1, \lambda_2).
\end{align*}

Set $s_1= \frac{\rho^{(1)}(\bar{x}_1,\bar{y}_1)}{4\kappa_1}$ and $s_2= \frac{\rho^{(2)}(\bar{x}_2, \bar{y}_2)}{4\kappa_2}$.
Since
\begin{align*}
\supp \MF{F_{s_1,s_2,r_1,r_2}}(\cdot, \cdot) - \supp \MF{\psi_{r_1, r_2}}(\cdot, \cdot) &\subset [0,2s_1]\times [0,2s_2] \\
&= \left[0,\frac{\rho^{(1)}(\bar{x}_1,\bar{y}_1)}{2\kappa_1}\right] \times \left[0,\frac{\rho^{(2)}(\bar{x}_2, \bar{y}_2)}{2\kappa_2}\right],
\end{align*}
it follows from the finite speed propagation (Proposition \ref{fsp}) that
\begin{align*}
&\supp  \left(K_{F_{s_1,s_2,r_1,r_2}(\sqrt{\Box_b^{(1)}}, \sqrt{\Box_b^{(2)}})}(\cdot, \cdot, \cdot, \cdot) - K_{\psi_{r_1, r_2}(\sqrt{\Box_b^{(1)}}, \sqrt{\Box_b^{(2)}})}(\cdot, \cdot,\cdot,\cdot)\right)\\
&\qquad\subset \left\{(x_1, x_2, y_1, y_2):  \rho^{(1)}(x,y) \leq \frac{\rho^{(1)}(\bar{x}_1, \bar{x}_2)}{2} \mbox{ and } \rho^{(2)}(x_2,y_2)
 \leq \frac{\rho^{(2)}(\bar{x}_2, \bar{x}_2)}{2}\right\}.
\end{align*}
Hence we can write
\begin{align*}
&(X_{\alpha}^{(1)})_{x_1} (X_{\alpha'}^{(2)})_{x_2} (X_{\beta}^{(1)})_{y_1} (X_{\beta'}^{(2)})_{y_2}K_{m(r_1^2\Box_b^{(1)}, r_2^2\Box_b^{(2)})}(x_1,x_2,y_1,y_2)\Bigm|_{(\bar{x}_1,
\bar{x}_2, \bar{y}_1, \bar{y}_2)} \\
&\qquad=(X_{\alpha}^{(1)})_{x_1} (X_{\alpha'}^{(2)})_{x_2} (X_{\beta}^{(1)})_{y_1} (X_{\beta'}^{(2)})_{y_2}K_{\psi_{r_1,r_2}(\sqrt{\Box_b^{(1)}}, \sqrt{\Box_b^{(2)}})}(x_1,x_2,y_1,y_2)\Bigm|_{(\bar{x}_1,
\bar{x}_2, \bar{y}_1, \bar{y}_2)} \\
&\qquad=(X_{\alpha}^{(1)})_{x_1} (X_{\alpha'}^{(2)})_{x_2} (X_{\beta}^{(1)})_{y_1} (X_{\beta'}^{(2)})_{y_2}K_{F_{s_1,s_2,r_1,r_2}(\sqrt{\Box_b^{(1)}}, \sqrt{\Box_b^{(2)}})}(x_1,x_2,y_1,y_2)\Bigm|_{(\bar{x}_1,
\bar{x}_2, \bar{y}_1, \bar{y}_2)} \\
&\qquad=(X_{\alpha}^{(1)})_{x_1} (X_{\alpha'}^{(2)})_{x_2} (X_{\beta}^{(1)})_{y_1} (X_{\beta'}^{(2)})_{y_2}K_{F_{s_1,s_2,r_1,r_2}(\sqrt{\hatbox_b^{(1)}}, \sqrt{\hatbox_b^{(2)}})}(x_1,x_2,y_1,y_2)\Bigm|_{(\bar{x}_1,
\bar{x}_2, \bar{y}_1, \bar{y}_2)} \\
&\qquad\qquad + (X_{\alpha}^{(1)})_{x_1} (X_{\alpha'}^{(2)})_{x_2} (X_{\beta}^{(1)})_{y_1} (X_{\beta'}^{(2)})_{y_2} K_{\pi^{(1)} \otimes F_{s_1,s_2,r_1,r_2} (0, \sqrt{\hatbox_b^{(2)}})} (x_1, y_1, x_2, y_2)\Bigm|_{(\bar{x}_1,
\bar{x}_2, \bar{y}_1, \bar{y}_2)}\\
&\qquad\qquad +(X_{\alpha}^{(1)})_{x_1} (X_{\alpha'}^{(2)})_{x_2} (X_{\beta}^{(1)})_{y_1} (X_{\beta'}^{(2)})_{y_2} K_{F_{s_1,s_2,r_1,r_2} (\sqrt{\hatbox_b^{(1)}}, 0) \otimes \pi^{(2)}} (x_1, y_1, x_2, y_2)\Bigm|_{(\bar{x}_1,
\bar{x}_2, \bar{y}_1, \bar{y}_2)} \\
&\qquad\qquad +F_{s_1,s_2,r_1,r_2} (0, 0) (X_{\alpha}^{(1)})_{x_1} (X_{\alpha'}^{(2)})_{x_2} (X_{\beta}^{(1)})_{y_1} (X_{\beta'}^{(2)})_{y_2} K_{\pi^{(1)} \otimes \pi^{(2)}} (x_1, y_1, x_2, y_2)\Bigm|_{(\bar{x}_1,
\bar{x}_2, \bar{y}_1, \bar{y}_2)} \\
&\qquad:= I_1 + I_2 + I_3 + I_4.
\end{align*}

We first estimate $I_1$. Using the fact that
\begin{align*}
F_{s_1,s_2,r_1,r_2}(\sqrt{\hatbox_b^{(1)}},\sqrt{\hatbox_b^{(2)}}) = J_{s_1,s_2,r_1,r_2}(\sqrt{\hatbox_b^{(1)}},
\sqrt{\hatbox_b^{(2)}}) \circ J_{s_1,s_2,r_1,r_2}(\sqrt{\hatbox_b^{(1)}},\sqrt{\hatbox_b^{(2)}})
\end{align*}
 and the Cauchy--Schwarz inequality, we see that
\begin{align*}
|I_1|
&=\left|(X_{\alpha}^{(1)})_{x_1} (X_{\alpha'}^{(2)})_{x_2} (X_{\beta}^{(1)})_{y_1} (X_{\beta'}^{(2)})_{y_2}K_{F_{s_1,s_2,r_1,r_2}
(\sqrt{\hatbox_b^{(1)}}, \sqrt{\hatbox_b^{(2)}})}(x_1,x_2,y_1,y_2)\Bigm|_{(\bar{x}_1,
\bar{x}_2, \bar{y}_1, \bar{y}_2)}\right|\\
&=  \left|(X_{\alpha}^{(1)})_{x_1} (X_{\alpha'}^{(2)})_{x_2} (X_{\beta}^{(1)})_{y_1} (X_{\beta'}^{(2)})_{y_2}
\int_{M^{(1)} \times M^{(2)}} K_{J_{s_1,s_2,r_1,r_2}
(\sqrt{\hatbox_b^{(1)}}, \sqrt{\hatbox_b^{(2)}})}(x_1,x_2,z_1,z_2)\right.\\
&\hspace{4cm}
 \times \left. K_{J_{s_1, s_2,r_1,r_2}(\sqrt{\hatbox_b^{(1)}}, \sqrt{\hatbox_b^{(2)})}}(z_1,z_2,y_1,y_2)
\,d\mu^1 (z_1)  \,d\mu^2(z_2)\Bigm|_{(\bar{x}_1,
\bar{x}_2, \bar{y}_1, \bar{y}_2)}\right|\\
&\leq \left\|(X_{\alpha}^{(1)})_{x_1} (X_{\alpha'}^{(2)})_{x_2}
 K_{J_{s_1,s_2,r_1,r_2} (\sqrt{\hatbox_b^{(1)}}, \sqrt{\hatbox_b^{(2)}})}(x_1,x_2,
 \cdot,\cdot)\Bigm|_{(x_1, x_2)=(\bar{x}_{1}, \bar{x}_{2})}  \right\|_{L^{2}(M^{(1)} \times M^{(2)})} \\
&\qquad \times \left\|  (X_{\beta}^{(1)})_{y_1} (X_{\beta'}^{(2)})_{y_2}K_{J_{s_1,s_2,r_1,r_2} (\sqrt{\hatbox_b^{(1)}}, \sqrt{\hatbox_b^{(2)}})}(\cdot,\cdot,y_1,y_2)\Bigm|_{(y_1, y_2)
 =(\bar{y}_1, \bar{y}_2)} \right\|_{L^2 (M^{(1)} \times M^{(2)})}\\
 & =:I_{11} \times I_{22}.
\end{align*}
We write
\begin{align*}
&(X_{\alpha}^{(1)})_{x_1} (X_{\alpha'}^{(2)})_{x_2}
 K_{J_{s_1,s_2,r_1,r_2} (\sqrt{\hatbox_b^{(1)}}, \sqrt{\hatbox_b^{(2)}})}(x_1,x_2,
 \cdot,\cdot)\\
 &=G(\sqrt{\Box_b^{(1)}}, \sqrt{\Box_b^{(2)}})(X_{\alpha}^{(1)})_{x_1} (X_{\alpha'}^{(2)})_{x_2}
 K_{\left(I + s_1 \sqrt{\Box_b^{(1)}}\right)^{-N_1 /2}  \otimes \left(I + s_2\sqrt{\Box_b^{(1)}}\right)^{-N_2 /2} }(x_1,x_2,
 \cdot,\cdot),
\end{align*}
where
\begin{align*}
G(\lambda_1, \lambda_2) = (1 + s_1 \lambda_1)^{N_1 /2} (1 + s_2 \lambda_2)^{N_2 /2}J_{s_1, s_2, r_1, r_2}(\lambda_1, \lambda_2).
\end{align*}
By Lemma \ref{elem} (iv),
\begin{align*}
|G(\lambda_1, \lambda_2)| \lesssim \left( 1+ \frac{s_1}{r_1} \right)^{({N_1/2} + {1/4}+{\varepsilon/2})-{\sigma_1/2}}
\left( 1+ \frac{s_2}{r_2} \right)^{({N_2/2} + {1/4} + {\varepsilon/2})-{\sigma_2/2}} .
\end{align*}
From this, Lemma \ref{bes} and Remark \ref{rmk}, it follows that
\begin{align*}
I_{11}
& \lesssim  \left( 1+ \frac{s_1}{r_1} \right)^{({N_1/2} + {1/4}+{\varepsilon/2})-{\sigma_1/2}}
\left( 1+ \frac{s_2}{r_2} \right)^{({N_2/2} + {1/4}+ {\varepsilon/2})-{\sigma_2/2}}
\left\|k \right\|_{L^{2}(M^{(1)} \times M^{(2)})}\\
 & \lesssim  \left( 1+ \frac{s_1}{r_1} \right)^{({N_1/2} + {1/4}+{\varepsilon/2})-{\sigma_1/2}}
\left( 1+ \frac{s_2}{r_2} \right)^{({N_2/2} + {1/4} + {\varepsilon/2})-{\sigma_2/2}} \frac{s_1^{-|\alpha|}}{V^{(1)}(\bar{x}_1,s_1)^{1/2}} \frac{s_2^{-|\alpha'|}}{V^{(2)}(\bar{x}_2, s_2)^{1/2}},
\end{align*}
where
\[
k(\cdot,\cdot) = (X_{\alpha}^{(1)})_{x_1} (X_{\alpha'}^{(2)})_{x_2}
 K_{\left(I + s_1 \sqrt{\Box_b^{(1)}}\right)^{-N_1 /2}  \otimes \left(I + s_2\sqrt{\Box_b^{(1)}}\right)^{-N_2 /2} }(x_1,x_2,
 \cdot,\cdot)\Bigm|_{(x_1, x_2)=(\bar{x}_{1}, \bar{x}_{2})} ;
\]
here we used the assumptions that $N_1 >  Q_1 + 2(|\alpha| \vee |\beta|)$ and  $N_2 >  Q_2 + 2(|\alpha'| \vee |\beta'|)$.
Similarly,
\begin{align*}
I_{12} \lesssim  \left( 1+ \frac{s_1}{r_1} \right)^{({N_1/2} + {1/4}+{\varepsilon/2})-{\sigma_1/2}}
\left( 1+ \frac{s_2}{r_2} \right)^{({N_2/2} + {1/4}+ {\varepsilon/2})-{\sigma_2/2}} \frac{s_1^{-|\beta|}}{V^{(1)}(\bar{y}_1,s_1)^{1/2}} \frac{s_2^{-|\beta'|}}{V^{(2)}(\bar{y}_2, s_2)^{1/2}}.
\end{align*}
Therefore,
\begin{align*}
|I_1| = I_{11} \times I_{12}  & \lesssim \left( 1+\frac{\rho^{(1)}(\bar{x}_1,\bar{y}_1)}
{r_1}\right)^{({N_1} + \varepsilon + {1/2} )- \sigma_1}
  \left( 1+\frac{\rho^{(2)}(\bar{x}_2,\bar{y}_2)}{r_2}\right)^{({N_2} + \varepsilon + {1/2})-{\sigma_2}} \\
&\quad \times \frac{s_1^{-|\alpha|}}{V^{(1)}(\bar{x}_1,s_1)^{1/2}} \frac{s_2^{-|\alpha'|}}{V^{(2)}(\bar{x}_2, s_2)^{1/2}}
\frac{s_1^{-|\beta|}}{V^{(1)}(\bar{y}_1,s_1)^{1/2}} \frac{s_2^{-|\beta'|}}{V^{(2)}(\bar{y}_2, s_2)^{1/2}},
\end{align*}

Next we estimate $I_2$. By Lemma \ref{elem} (iv),
\begin{align*}
|J_{s_1, s_2, r_1,r_2} (0, \lambda_2)| \lesssim
\left( 1+ \frac{s_1}{r_1} \right)^{({N_1/2} + {1/4}+ {\varepsilon/2})- {\sigma_1/2}}
\left( 1+ \frac{s_2}{r_2} \right)^{({N_2/2} + {1/4}+ {\varepsilon/2})- {\sigma_2/2}}
(1+\lambda_2 s_2)^{-{N_2/2}}.
\end{align*}
Hence
\begin{align*}
&\left|(X_{\alpha'}^{(2)})_{x_2} (X_{\beta'}^{(2)})_{y_2} K_{F_{s_1,s_2,r_1,r_2}
(0, \sqrt{\hatbox_b^{(2)}})}(x_2,y_2)\Bigm|_{(\bar{x}_2, \bar{y}_2)}\right|\\
&\qquad=  \left|(X_{\alpha'}^{(2)})_{x_2} (X_{\beta'}^{(2)})_{y_2}
\int_{M^{(2)}} K_{J_{s_1,s_2,r_1,r_2}(0, \sqrt{\hatbox_b^{(2)}})}(x_2,z_2) K_{J_{s_1, s_2,r_1,r_2}(0, \sqrt{\hatbox_b^{(2)}})}(z_2,y_2)\,dz_2\Bigm|_{(\bar{x}_2, \bar{y}_2)} \right|\\
&\qquad \leq \left\|(X_{\alpha'}^{(2)})_{x_2}
 K_{J_{s_1,s_2,r_1,r_2} (0, \sqrt{\hatbox_b^{(2)}})}(x_2,
  \cdot)\Bigm|_{x_2= \bar{x}_2}  \right\|_{L^{2}( M^{(2)})}
\\ &\hspace{4cm}
\left\|(X_{\beta'}^{(2)})_{y_2} K_{J_{s_1,s_2,r_1,r_2} (0,
 \sqrt{\hatbox_b^{(2)}})}(\cdot,y_2)\Bigm|_{y_2 = \bar{y}_2} \right\|_{L^2 ( M^{(2)})}\\
&\qquad\lesssim \left( 1+\frac{s_1}{r_1}\right)^{({N_1/2} + {1/4}+{\varepsilon/2})-{\sigma_1/2}}
\left( 1+\frac{s_2}{r_2}\right)^{({N_2/2} + {1/4}+{\varepsilon/2})-{\sigma_2/2}}
\\
&\hspace{4cm}
  \left\|(X_{\alpha'}^{(2)})_{x_2} K_{(I+s_2 \sqrt{\hatbox_{b}^{(2)}})^{-N_2/2}}
  (x_2, \cdot)\Bigm|_{x_2 =\bar{x}_2}\right\|_{L^2 (M^{(2)})}\\
&\qquad\qquad   \times
  \left( 1+\frac{s_1}{r_1}\right)^{({N_1/2} + {1/4}+{\varepsilon/2})-{\sigma_1/2}}
  \left( 1+\frac{s_2}{r_2}\right)^{({N_2/2} + {1/4}+{\varepsilon/2})-{\sigma_2/2}}
\\
&\hspace{4cm}
  \left\|(X_{\beta'}^{(2)})_{y_2} K_{(I+s_2 \sqrt{\hatbox_{b}^{(2)}})
  ^{-N_2/2}}(\cdot, y_2)\Bigm|_{y_2 =\bar{y}_2}\right\|_{L^2 (M^{(2)})}\\
&\qquad\lesssim  \left( 1+\frac{s_1}{r_1}\right)^{(N_1
  + \varepsilon + {1/2})-\sigma_1}
  \left( 1+\frac{s_2}{r_2}\right)^{({N_2/2} + {1/4}+{\varepsilon/2})-{\sigma_2/2}}
      \frac{s_2^{-|\alpha|}}{V^{(2)} (\bar{x}_2, s_2)^{1/2}} \frac{s_2^{-|\beta|}}{V^{(2)} (\bar{y}_2, s_2)^{1/2}}\\
&\qquad\sim \left( 1+\frac{\rho^{(1)}(\bar{x}_1,\bar{y}_1)}{r_1}\right)^{({N_1} + \varepsilon + {1/2})-{\sigma_1}}\left( 1+\frac{\rho^{(2)}(\bar{x}_2,\bar{y}_2)}{r_2}\right)^{({N_2} + \varepsilon + {1/2})-{\sigma_2}} \\
&\hspace{4cm}\frac{\rho^{(2)}(\bar{x}_2, \bar{y}_2)^{-|\alpha'| -|\beta'|}}{V^{(2)}(\bar{x}_2, \rho^{(2)}(\bar{x}_2,
    \bar{y}_2) +r_2)},
\end{align*}
where we used the assumption that $N_2 >  Q_2 + 2(|\alpha'| \vee |\beta'|)$.

Since $\pi^{(1)}$ is an NIS operator,
\begin{align*}
\left|(X_{\alpha}^{(1)})_{x_1}(X_{\beta}^{(1)})_{y_1} K_{\pi^{(1)}}(x_1,y_1)\Bigg|_{(\bar{x}_1, \bar{y}_1))}\right|
&\leq C_{\alpha, \beta} \frac{\rho^{(1)}(\bar{x}_1,\bar{y}_1)^{-|\alpha|-|\beta|}}{V^{(1)}(\bar{x}_1, \rho^{(1)}(\bar{x}_1,\bar{y}_1))} \\
&\sim \frac{\rho^{(1)}(\bar{x}_1,\bar{y}_1)^{-|\alpha|-|\beta|}}{V^{(1)}(\bar{x}_1, \rho^{(1)}(\bar{x}_1,\bar{y}_1)+r_1)} .
\end{align*}
Therefore,
\begin{align*}
|I_2| &\lesssim \left( 1+\frac{\rho^{(1)}(\bar{x}_1,\bar{y}_1)}{r_1}\right)^{({N_1} +
\varepsilon + {1/2})-{\sigma_1}}   \left( 1+\frac{\rho^{(2)}(\bar{x}_2,\bar{y}_2)}{r_2}\right)^{({N_2} + \varepsilon + {1/2})-{\sigma_2}}\\
&\qquad \times  \frac{\rho^{(1)}(\bar{x}_1,\bar{y}_1)^{-|\alpha|-|\beta|}}{V^{(1)}(\bar{x}_1,
\rho(\bar{x}_1,\bar{y}_1)+r_1)}  \frac{\rho^{(2)}(\bar{x}_2, \bar{y}_2)^{-|\alpha'| -|\beta'|}}
{V^{(2)}(\bar{x}_2, \rho^{(2)}(\bar{x}_2,\bar{y}_2) +r_2)}.
\end{align*}
The estimate of $I_3$ is similar to that of $I_2$.

Finally we estimate $I_4$.
By Lemma 3.2 (iv),
\begin{align*}
|F_{s_1,s_2,r_1,r_2}(0,0)| \lesssim \left( 1+ \frac{s_1}{r_1} \right)^{(N_1 + \varepsilon + {1/2})-\sigma_1}
\left( 1+ \frac{s_2}{r_2} \right)^{(N_2 + \varepsilon + {1/2})-\sigma_2},
\end{align*}
and
\begin{align*}
|I_4|
&\lesssim \left( 1+ \frac{s_1}{r_1} \right)^{(N_1 + \varepsilon + {1/2})-\sigma_1} \left( 1+ \frac{s_2}{r_2} \right)^{(N_2 + \varepsilon + {1/2})-\sigma_2} \\
&\hspace{5cm}
\times \frac{\rho^{(1)}(x_1,y_1)^{-|\alpha|-|\beta|}}{V^{(1)}(x_1, \rho^{(1)}(x_1,y_1))}
\frac{\rho^{(2)}(x_2,y_2)^{-|\alpha'|-|\beta'|}}{V^{(2)}(x_2, \rho^{(2)}(x_2,y_2))}\\
&\sim \left( 1+\frac{\rho^{(1)}(x_1,y_1)}{r_1}\right)^{({N_1} + \varepsilon + {1/2})-{\sigma_1}}   \left( 1+\frac{\rho^{(2)}(x_2,y_2)}{r_2}\right)^{({N_2} + \varepsilon + {1/2})-{\sigma_2}}\\
&\hspace{5cm}
\times  \frac{\rho^{(1)}(x_1,y_1)^{-|\alpha|-|\beta|}}{V^{(1)}(x_1, \rho^{(1)}(x_1,y_1)+r_1)}  \frac{\rho^{(2)}(x_2, y_2)^{-|\alpha'| -|\beta'|}}{V^{(2)}(x_2, \rho^{(2)}(x_2,y_2) +r_2)}.
\end{align*}

\subsubsection*{Case 2.} $\rho^{(1)}(\bar{x}_1, \bar{y}_1) \geq r_1$ and $\rho^{(2)}(\bar{x}_2, \bar{y}_2) \leq r_2$.
Let $\psi(\lambda_1, \lambda_2):= m(\lambda_1^2, \lambda_2^2)$ and define  $F_{s}(\lambda_1, \lambda_2)$ by the formula
\begin{align*}
\MF_1{ F_{s_1, r_1,r_2}} (\xi_1, \lambda_2) = \phi\left(\frac{\xi_1}{s_1}\right) \frac{1}{r_1}
\MF_1{\psi}\left(\frac{\xi_1}{r_1}, r_2 \lambda\right).
\end{align*}
Let $J_{s_1,r_1,r_2} (\lambda_1, \lambda_2)$ be a complex-valued function such that
\begin{align*}
J_{s_1,r_1,r_2} (\lambda_1, \lambda_2)^2 = F_{s_1,r_1,r_2} (\lambda_1, \lambda_2).
\end{align*}
Set $s_1= {\rho^{(1)}(x_1,y_1)/(4\kappa_1)}$.
Then by the finite speed propagation (as in Case 1),
\begin{equation*}
\begin{aligned}
&(X_{\alpha}^{(1)})_{x_1} (X_{\alpha'}^{(2)})_{x_2} (X_{\beta}^{(1)})_{y_1} (X_{\beta'}^{(2)})_{y_2}K_{m(r_1^2\Box_b^{(1)}, r_2^2\Box_b^{(2)})}(x_1,x_2,y_1,y_2)\Bigm|_{(\bar{x}_1,
\bar{x}_2, \bar{y}_1, \bar{y}_2)} \\
&\qquad=(X_{\alpha}^{(1)})_{x_1} (X_{\alpha'}^{(2)})_{x_2} (X_{\beta}^{(1)})_{y_1} (X_{\beta'}^{(2)})_{y_2}K_{F_{s_1,r_1,r_2}(\sqrt{\Box_b^{(1)}}, \sqrt{\Box_b^{(2)}}})(x_1,x_2,y_1,y_2)\Bigm|_{(\bar{x}_1,
\bar{x}_2, \bar{y}_1, \bar{y}_2)} \\
&\qquad=(X_{\alpha}^{(1)})_{x_1} (X_{\alpha'}^{(2)})_{x_2} (X_{\beta}^{(1)})_{y_1} (X_{\beta'}^{(2)})_{y_2}K_{F_{s_1,r_1,r_2}(\sqrt{\hatbox_b^{(1)}}, \sqrt{\hatbox_b^{(2)}})}(x_1,x_2,y_1,y_2)\Bigm|_{(\bar{x}_1,
\bar{x}_2, \bar{y}_1, \bar{y}_2)}\\
&\qquad\qquad + (X_{\alpha}^{(1)})_{x_1} (X_{\alpha'}^{(2)})_{x_2} (X_{\beta}^{(1)})_{y_1} (X_{\beta'}^{(2)})_{y_2}
K_{\pi^1 \otimes F_{s_1,r_1,r_2} (0, \sqrt{\hatbox_b^{(2)}}}) (x_1, y_1, x_2, y_2)\Bigm|_{(\bar{x}_1,
\bar{x}_2, \bar{y}_1, \bar{y}_2)} \\
&\qquad= : I_1 + I_2.
\end{aligned}
\end{equation*}

We first estimate $I_1$.
By Lemma \ref{elem} (ii),
\begin{align*}
|F_{s_1, r_1,r_2} (\lambda_1,\lambda_2)|\lesssim (1 +s_1\lambda_1 )^{-N_1}\left( 1+ \frac{s_1}{r_1} \right)^{(N_1 + \varepsilon + {1/2})-\sigma_1}
(1 +r_2\lambda_2 )^{-N_2}.
\end{align*}
Hence
\begin{align*}
\left|I_1\right|
&= \biggl| (X_{\alpha}^{(1)})_{x_1} (X_{\alpha'}^{(2)})_{x_2} (X_{\beta}^{(1)})_{y_1} (X_{\beta'}^{(2)})_{y_2} \\
&\qquad\int_{M_1 \times M_2} K_{J_{s_1,r_1,r_2} (\sqrt{\hatbox_b^{(1)}}, \hatbox_b^{(2)})}(x_1,x_2,z_1,z_2)
K_{J_{s_1,r_1,r_2} (\sqrt{\hatbox_b^{(1)}}, \hatbox_b^{(2)})}(z_1,z_2,y_1,y_2)
\,dz_1 \,dz_2\biggr|\\
&\leq \left\|(X_{\alpha}^{(1)})_{x_1} (X_{\alpha'}^{(2)})_{x_2}
 K_{J_{s_1,r_1,r_2} (\sqrt{\hatbox_b^{(1)}}, \hatbox_b^{(2)})}(x_1,x_2,
 \cdot,\cdot)\Bigm|_{(x_1, x_2)=(\bar{x}_1, \bar{x}_2)}  \right\|_{L^{2}(M_1 \times M_2)} \\
&\qquad \times \left\|(X_{\beta}^{(1)})_{y_1} (X_{\beta'}^{(2)})_{y_2} K_{J_{s_1,r_1,r_2} (\sqrt{\hatbox_b^{(1)}}, \hatbox_b^{(2)})}(\cdot,\cdot,y_1,y_2)\Bigm|_{(y_1, y_2)
 =(\bar{y}_1, \bar{y}_2)} \right\|_{L^2 (M_1 \times M_2)}\\
&\lesssim \left( 1+\frac{s_1}{r_1}\right)^{({N_1/2} + {1/4}+{\varepsilon/2})-{\sigma_1/2}}
\left( 1+\frac{s_1}{r_1}\right)^{({N_1/2} + {1/4}+{\varepsilon/2})-{\sigma_1/2}}
\\
&\qquad\times \left\|  (X_{\alpha}^{(1)})_{x_1}  K_{(I+s_1 \sqrt{\hatbox_{b}^{(1)}})^{-N_1 /2}}(x_1, \cdot)\Bigm|_{x_1 =\bar{x}_1}\right\|_{L^2 (M^{(1)})}\\
&\qquad\times   \left\| (X_{\alpha'}^{(2)})_{x_2} K_{(I+r_2 \sqrt{\hatbox_{b}^{(2)}})^{-N_2 /2}}(x_2, \cdot)\Bigm|_{x_2 =\bar{x}_{2}}\right\|_{L^2 (M^{(2)})}  \\
&\qquad  \times
 \left\|(X_{\beta}^{(1)})_{y_1} K_{(I+s_1 \sqrt{\hatbox_{b}^{(1)}})^{-N_1/2}}(\cdot, y_1)\Bigm|_{y_1 =\bar{y}_1}\right\|_{L^2 (M^{(1)})}\\
&\qquad \times
  \left\| (X_{\beta'}^{(2)})_{y_2} K_{(I+r_2 \sqrt{\hatbox_{b}^
  {(2)}})^{-N_2 /2}}(\cdot, y_2)\Bigm|_{y_2 =\bar{y}_2}\right\|_{L^2 (M^{(2)})} \\
&\lesssim \left( 1+\frac{\rho^{(1)}(\bar{x}_1,\bar{y}_1)}{r_1}\right)^{({N_1}
   + \varepsilon + {1/2})-{\sigma_1}}
  \frac{s_1^{-|\alpha| -|\beta|}}{V^{(1)}(\bar{x}_1, s_1)} \frac{ r_2^{-|\alpha| -|\beta|}}{V^{(2)}(\bar{x}_2, r_2)}\\
  & \sim \left( 1+\frac{\rho^{(1)}(\bar{x}_1,\bar{y}_1)}{r_1}\right)^{({N_1} + \varepsilon + {1/2})-{\sigma_1}}
  \left( 1+\frac{\rho^{(2)}(\bar{x}_2, \bar{y}_2)}{r_2}\right)^{({N_2} + \varepsilon + {1/2})-{\sigma_2}}\\
&\qquad \times  \frac{\rho^{(1)}(\bar{x}_1,\bar{y}_1)^{-|\alpha| -|\beta|}}{V^{(1)}(\bar{x}_1, \rho^{(1)}(\bar{x}_1,\bar{y}_1)+r_1)} \frac{ \rho^{(2)}(\bar{x}_2,\bar{y}_2)^{-|\alpha'| -|\beta'|}}
{V^{(2)}(\bar{x}_2, \rho^{(2)}(\bar{x}_2,\bar{y}_2)+r_2)}
\end{align*}
since $\rho^{(2)}(\bar{x}_2, \bar{y}_2) \leq r_2$.

\subsubsection*{Case 3.} $\rho^{(1)}(\bar{x}_1, \bar{y}_1) \leq r_1$ and $\rho^{(2)}(\bar{x}_2, \bar{y}_2) \geq r_2$.
This case is very similar to Case 2, except that we use Lemma 3.1 (iii) instead of Lemma 3.1 (ii).

\subsubsection*{Case 4.} $\rho^{(1)}(\bar{x}_1, \bar{y}_1) \leq r_1$ and $\rho^{(2)}(\bar{x}_2, \bar{y}_2) \leq r_2$.
This case can be easily handled by using Lemma 3.1 (i). We omit the details.
\end{proof}

\section{Proof of Theorem \ref{main}}

Our aim is to show that $m(\Box_b^{(1)}, \Box_b^{(2)})$ is bounded on $L^p (M^{(1)} \times M^{(2)})$. To do this, we will show that
$m(\Box_b^{(1)}, \Box_b^{(2)})$ is a product singular integral operator satisfying all the conditions in Proposition \ref{mmm}.

First, we write
\begin{equation} \label{sisa}
\begin{split}
m(\Box_b^{(1)}, \Box_b^{(2)}) &=[ m(0,0) \pi^{(2)} \otimes \pi^{(2)}]  +  \big[\pi^{(1)} \otimes m^2(\hatbox_b^{(2)})\big]\\
&\qquad + \big[m^1(\hatbox_b^{(1)}) \otimes \pi^{(2)}\big] + \big[m(\hatbox_b^{(1)}, \hatbox_b^{(2)})\big],
\end{split}
\end{equation}
where
\begin{align*}
m^1(\lambda_1) := m(\lambda_1, 0) \quad \mbox{and} \quad m^2(\lambda_2):=m(0, \lambda_2).
\end{align*}

Since $\pi^{(k)}$ is an NIS operator on $M^{(k)}$,  it is bounded on $L^{2}(M^{(k)})$ for $1<p <\infty$ and satisfies
(i) and (ii) of Definition~\ref{NIS} for $k=1,2$.
Hence $\pi^{(1)} \otimes \pi^{(2)}$ satisfies all the conditions in Proposition \ref{mmm}.
Moreover, since $m$ satisfies \eqref{con2} and \eqref{con3}, it follows from Street's result on one-parameter spectral multipliers
associated with Kohn Laplacians that $m^1(\hatbox_b^{(1)})$ and $m^2(\hatbox_b^{(2)})$ satisfy
(i) and (ii) of Definition~\ref{NIS}.
This, together with the fact that $\pi^{(1)}$ and $\pi^{(2)}$ are bounded on $L^2$, implies that both $\pi^{(1)} \otimes m^2(\hatbox_b^{(2)})$ and $m^1(\hatbox_b^{(1)}) \otimes \pi^{(2)}$
satisfy all the conditions in Proposition \ref{mmm}.
Thus, it remains to show that $m(\hatbox_b^{(1)}, \hatbox_b^{(2)})$ satisfies the conditions in Proposition \ref{mmm}.

Let $\omega$ be a nonnegative smooth function on $[0, \infty)$ supported in
$[0,2]$ such that $\omega(\lambda) =1$ for $\lambda \in [0,1]$.
Let $\theta(\lambda) = \omega(\lambda) -\omega(2\lambda)$ for all $\lambda \in [0,\infty)$.
Then $\supp \theta \subset [1/2,2]$ and
\begin{align*}
\sum_{j =-\infty}^{+\infty} \theta(2^{-j}\lambda) &= \sum_{j=-\infty}^{+\infty}
\big[\omega(2^{-j}\lambda) - \omega(2^{-(j-1)}\lambda)  \big]\\
&= \sum_{j =-\infty}^{-1} \big[\omega(2^{-j}\lambda) - \omega(2^{-(j-1)}\lambda)  \big]
 + \sum_{j=0}^{+\infty}\big[\omega(2^{-j}\lambda) - \omega(2^{-(j-1)}\lambda)  \big]\\
&= \lim_{j \to -\infty}\big[\omega(2\lambda) -\omega(2^{-j}\lambda) \big] + \lim_{j \to +\infty}
 \big[\omega(2^{-j}\lambda)- \omega(2\lambda)\big]\\
&= -\lim_{j \to -\infty}\omega(2^{-j}\lambda)  + \lim_{j \to +\infty} \omega(2^{-j}\lambda)\\
&=0 + 1=1.
\end{align*}
Set $m_{j_1,j_2} ({\Box}_b^{(1)}, {\Box}_b^{(2)})= \theta(2^{-j_1}{\Box}_b^{(1)})\theta(2^{-j_2}{\Box}_b^{(2)})m({\Box}_b^{(1)}, {\Box}_b^{(2)})$.
Now $m_{j_1,j_2}(0,0)=0$, and it follows that $m_{j_1,j_2} ({\Box}_b^{(1)}, {\Box}_b^{(2)})=m_{j_1,j_2}(\hatbox_b^{(1)}, \hatbox_b^{(2)})$.
This implies that
\[
m(\hatbox_b^{(1)}, \hatbox_b^{(2)})=\sum_{j_1,j_2} m_{j_1,j_2} ({\Box}_b^{(1)}, {\Box}_b^{(2)}).
\]

Similarly, define
\[
m_{j_1} ({\Box}_b^{(1)}, \hatbox_b^{(2)})= \theta(2^{-j_1}{\Box}_b^{(1)})m({\Box}_b^{(1)}, \hatbox_b^{(2)})
\]
and
\[
m_{j_2} ({\Box}_b^{(1)}, \hatbox_b^{(2)})= \theta(2^{-j_2}{\Box}_b^{(2)})m(\hatbox_b^{(1)}, {\Box}_b^{(2)}).
\]
Then
\[
m(\hatbox_b^{(1)}, \hatbox_b^{(2)})=\sum_{j_1} m_{j_1} ({\Box}_b^{(1)}, \hatbox_b^{(2)})
\]
and
\[
m(\hatbox_b^{(1)}, \hatbox_b^{(2)})=\sum_{j_2} m_{j_2} (\hatbox_b^{(1)},{\Box}_b^{(2)}).
\]
Next we need the following auxiliary lemma.

\begin{lemma}\label{summation}
Let $\delta=\rho(x,y)$ and $N > a\geq 0$.
\begin{equation}
\sum_{j=-\infty}^{\infty} \left(1+\frac{\delta}{2^{-j}}\right)^{-N}\frac{2^{ja}}{V(x,\delta+2^{-j})}\leq C\frac{\delta^{-a}}{V(x,\delta)}.
\end{equation}
\end{lemma}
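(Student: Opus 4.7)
The plan is to split the sum at the dyadic scale $2^{-j} \sim \delta$ and then, on each piece, approximate the two factors that depend on $2^{-j}$ and sum a geometric series. Write
\[
\Sigma = \sum_{2^{-j} \leq \delta} + \sum_{2^{-j} > \delta} =: \Sigma_1 + \Sigma_2.
\]

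For $\Sigma_1$, when $2^{-j} \leq \delta$ we have $1 + \delta/2^{-j} \sim \delta 2^{j}$ and, by the doubling property \eqref{ccc}, $V(x, \delta + 2^{-j}) \sim V(x, \delta)$. Hence each summand is bounded by a constant multiple of $(\delta 2^j)^{-N} 2^{ja}/V(x,\delta) = \delta^{-N} 2^{j(a-N)}/V(x,\delta)$. Since $a < N$, the geometric series over $j$ with $2^j \gtrsim 1/\delta$ is dominated by its largest term, which occurs at $2^j \sim 1/\delta$, giving
\[
\Sigma_1 \lesssim \frac{\delta^{-N}}{V(x,\delta)} \cdot \delta^{N-a} = \frac{\delta^{-a}}{V(x,\delta)}.
\]

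For $\Sigma_2$, when $2^{-j} > \delta$ we have $1 + \delta/2^{-j} \sim 1$ and $V(x, \delta + 2^{-j}) \sim V(x, 2^{-j})$. Here is the one step where the geometry matters: I invoke the reverse doubling inequality \eqref{reverse doubling} with $\lambda = 2^{-j}/\delta \geq 1$, which yields
\[
V(x, 2^{-j}) \geq C \bigl(2^{-j}/\delta\bigr)^{4} V(x, \delta),
\]
so $V(x, 2^{-j})^{-1} \lesssim \delta^{4} 2^{4j}/V(x,\delta)$. Each summand is thus at most $\delta^{4} 2^{j(a+4)}/V(x,\delta)$, and the geometric series over $j$ with $2^{j} \lesssim 1/\delta$ (ratio $2^{a+4} > 1$) is controlled by its largest term at $2^{j} \sim 1/\delta$, giving
\[
\Sigma_2 \lesssim \frac{\delta^{4}}{V(x,\delta)} \cdot \delta^{-(a+4)} = \frac{\delta^{-a}}{V(x,\delta)}.
\]

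The main (and essentially the only nontrivial) point is that one must use \emph{both} sides of the doubling property: standard doubling on $\Sigma_1$ and the lower bound \eqref{reverse doubling} on $\Sigma_2$. Without the reverse doubling, the terms in $\Sigma_2$ would only be controlled pointwise by $2^{ja}/V(x,\delta)$, which is not summable. Adding the two contributions gives the claimed bound.
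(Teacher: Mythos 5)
Your proof is correct and follows essentially the same route as the paper: split the sum at the scale $2^{-j}\sim\delta$, use reverse doubling \eqref{reverse doubling} to control the volumes at the large scales $2^{-j}>\delta$, and use the decay hypothesis $N>a$ to sum the small scales $2^{-j}\le\delta$; both geometric series peak at $2^{-j}\sim\delta$. The paper's two sums are exactly your $\Sigma_2$ and $\Sigma_1$ respectively, so there is nothing to add.
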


\begin{proof}
The volume of the ball satisfies the reverse doubling condition~\eqref{reverse doubling} and thus
\begin{align*}
&\sum_{2^{-j}\geq \delta} \left(1+\frac{\delta}{2^{-j}}\right)^{-N}\frac{2^{ja}}{V(x,\delta+2^{-j})}\\
&\qquad\leq C_N \sum_{2^{-j}\geq \delta} \left(\frac{\delta}{2^{-j}}\right)^4\frac{2^{ja}}{V(x,\delta)}\\
&\qquad\leq C_N \frac{\delta^{-a}}{V(x,\delta)}.
\end{align*}
On the other hand, since $N>a$,
\begin{align*}
&\sum_{2^{-j}< \delta} \left(1+\frac{\delta}{2^{-j}}\right)^{-N}\frac{2^{ja}}{V(x,\delta+2^{-j})}\\
&\qquad\leq C_N \sum_{2^{-j}< \delta} \left(\frac{\delta}{2^{-j}}\right)^{-N}\frac{2^{ja}}{V(x,\delta)}\\
&\qquad\leq C_N \frac{\delta^{-a}}{V(x,\delta)}.
\end{align*}

The proof of Lemma \ref{summation} is complete.
\end{proof}

\begin{proof}[\bf Proof of Theorem~\ref{main}]
We only need to check that $K_{m(\hatbox_b^{(1)}, \hatbox_b^{(2)})}$ satisfies the conditions of Proposition~\ref{mmm}.
Define $m_{j_1,j_2}, m_{j_1}$ and $m_{j_2}$ as above.
Then from Proposition~\ref{ker} and Lemma~\ref{summation}, it follows that
\begin{align*}
&\left| (X_{\alpha}^{(1)})_{x_1} (X_{\beta}^{(1)})_{y_1} (X_{\alpha'}^{(2)})_{x_2}
 (Y^{(2)}_{\beta'})_{y_2}K_{m(\hatbox_b^{(1)}, \hatbox_b^{(2)})}(x_1, x_2, y_1, y_2)\right|\\
&\qquad\lesssim \sum_{j_1,j_2 } \left |(X_{\alpha}^{(1)})_{x_1} (X_{\beta}^{(1)})_{y_1} (X_{\alpha'}^{(2)})_{x_2}
(Y^{(2)}_{\beta'})_{y_2}m_{j_1,j_2} ({\Box}_b^{(1)}, {\Box}_b^{(2)})(x_1, x_2, y_1, y_2)\right|\\
&\qquad\leq \sum_{j_1,j_2} \|\theta(\cdot_1)\theta(\cdot_2)m(2^{j_1}\cdot_1, 2^{j_2}\cdot_2)\|_{W^{(\sigma_1, \sigma_2),2}(\R \times \R )} \\
&\qquad\qquad \times \left(1+\frac{\rho^{(1)}(x_1,y_1)}{2^{-j_1}} \right)^{N_1 +\varepsilon + {1/2} -\sigma_1}\frac{(2^{-j_1} \vee \rho^{(1)}(x_1, y_1))^{-|\alpha| -|\beta|}}{V^{(1)}(x_1, \rho^{(1)}(x_1,y_1) +2^{-j_1})}\\
&\qquad\qquad
\times \left(1+\frac{\rho^{(2)}(x_2,y_2)}{2^{-j_2}} \right)^{N_2 +\varepsilon + {1/2} -\sigma_2}\frac{(2^{-j_2} \vee \rho^{(2)}(x_2, y_2))^{-|\alpha'| -|\beta'|}}{V^{(2)}(x_2, \rho^{(2)}(x_2,y_2) +2^{-j_2})}\\
&\qquad\leq  C\sup_{t_1, t_2 >0} \|\eta (\lambda_1) \eta (\lambda_2)
m(t_1 \lambda_1, t_2 \lambda_2)\|_{W^{(\sigma_1, \sigma_2), 2}} \frac{\rho^{(1)} (x_1, y_1)^{-|\alpha| - |\beta|} \rho^{(2)}(x_2, y_2)^{-|\alpha'|-|\beta'|}}{V^{(1)}(x_1, y_1)V^{(2)}(x_2,y_2)}.
\end{align*}

Similarly, from Proposition~\ref{ker1} and Lemma~\ref{summation} it follows that
\begin{align*}
&\left|\int_{M^{(2)}}\int_{M^{(2)}}(X_{\alpha}^{(1)})_{x_1}(X_{\beta}^{(1)})_{y_1}K_{m(\hatbox_b^{(1)}, \hatbox_b^{(2)})}(x_1, x_2,y_1, y_2)f_2(x_2)g_2(y_2)\,d\mu^{(2)}(x_2)\,d\mu^{(2)}(y_2) \right|\\
&\qquad\leq \sum_{j_1}\biggl|\int_{M^{(2)}}\int_{M^{(2)}}(X_{\alpha}^{(1)})_{x_1}(X_{\beta}^{(1)})_{y_1}K_{m_{j_1} ({\Box}_b^{(1)}, \hatbox_b^{(2)})}(x_1, x_2,y_1, y_2) \\
&\hspace{4cm} f_2(x_2)g_2(y_2)\,d\mu^{(2)}(x_2)\,d\mu^{(2)}(y_2) \biggr|\\
&\qquad\leq C \sum_{j_1}\|\theta(\cdot_1)m(2^{j_1}\cdot_1, \cdot_2)\|_{W^{\sigma_1,2}L^\infty(\R \times \R )}\|f_2\|_2\|g_2\|_2 \\
&\hspace{4cm} \times \left(1+\frac{\rho^{(1)}(x_1,y_1)}{2^{-j_1}} \right)^{N_1 +\varepsilon + {1/2} -\sigma_1}\frac{(2^{-j_1} \vee \rho^{(1)}(x_1, y_1))^{-|\alpha| -|\beta|}}{V^{(1)}(x_1, \rho^{(1)}(x_1,y_1) +2^{-j_1})}\\
&\qquad\leq C\sup_{t_1, t_2 >0} \|\eta (\lambda_1) \eta (\lambda_2)
m(t_1 \lambda_1, t_2 \lambda_2)\|_{W^{(\sigma_1, \sigma_2), 2}}
\|f_2\|_2\|g_2\|_2\frac{\rho^{(1)}(x_1,y_1)^{-|\alpha|-|\beta|}}{V^{(1)}(x_1, y_1)}.
\end{align*}
Note that this estimate still holds if the indices $1$ and $2$ are interchanged, that is, if the roles of $M^{(1)}$ and
$M^{(2)}$ are changed. Moreover, since $m(\Box_b^{(1)}, \Box_b^{(2)})$ is a self-adjoint operator, its transpose also satisfies these conditions. Hence, $m(\Box_b^{(1)}, \Box_b^{(2)})$, defined spectrally on $L^2(\widetilde{M})$, is a product Calder\'on--Zygmund operator on $\widetilde{M}$ as studied in
\cite{HLL}.
\end{proof}

\newpage

\section*{Acknowledgements}
Cowling is supported by the Australia Research Council, through grant DP170103025.
Hu is supported by the National Natural Science Foundation of China  (Grant No. 11901256) and the Natural Science Foundation of Jiangxi Province  (Grant No. 20192BAB211001).
Li is supported by the Australia Research Council, through grant DP170101060.

\end{document}